\documentclass[11pt]{article}
\usepackage[top=1in, bottom=1in, left=1in, right=1in]{geometry}

\usepackage[pagebackref, letterpaper=true, colorlinks=true, urlcolor=blue, linkcolor=blue, citecolor=OliveGreen,]{hyperref}              
\usepackage{enumerate}                        
\usepackage{float}                            

\usepackage[ruled,vlined,linesnumbered]{algorithm2e}
\usepackage{environ}

\usepackage[dvipsnames]{xcolor}               
\usepackage{amsmath, amssymb}
\usepackage{graphicx}                         
\usepackage{booktabs}                         
\usepackage{mathtools}                        
\usepackage{amsthm}
\usepackage{thmtools}
\usepackage{thm-restate}
\usepackage{appendix}
\usepackage{subfig}                           
\usepackage{bm}                               
\usepackage{amsfonts}
\usepackage{bbm}                              
\usepackage[english]{babel}
\usepackage[utf8]{inputenc}
\usepackage[T1]{fontenc}
\usepackage[capitalise,nameinlink]{cleveref}
\usepackage{microtype}

	\newcounter{parentAlgoLine}
	\SetKwBlock{BEGIN}{}{}
	
	\makeatletter
	\NewEnviron{substeps}{%
		\refstepcounter{AlgoLine}%
		\protected@edef\theparentequation{\theequation}%
		\setcounter{parentAlgoLine}{\value{AlgoLine}-1}%
		\setcounter{AlgoLine}{0}%
		\BEGIN{\BODY}%
		\setcounter{AlgoLine}{\value{parentAlgoLine}}%
		\ignorespacesafterend
	}
	\makeatother
	
	\renewcommand{\epsilon}{\varepsilon}
	\renewcommand{\geq}{\geqslant}
	\renewcommand{\leq}{\leqslant}
	
	\newcommand\numberthis{\refstepcounter{equation}\tag{\theequation}}
	
	\newtheorem{theorem}{Theorem}[section]
	\newtheorem{lemma}[theorem]{Lemma}
	\newtheorem{corollary}[theorem]{Corollary}
	
	\theoremstyle{definition}
	\newtheorem{definition}[theorem]{Definition}
	\newtheorem{model}[theorem]{Model}
	\newtheorem{assumption}[theorem]{Assumption}
	
	\theoremstyle{remark}
	\newtheorem{remark}[theorem]{Remark}
	
	\crefname{lemma}{Lemma}{Lemmas}
	\crefname{theorem}{Theorem}{Theorems}
	\crefname{corollary}{Corollary}{Corollaries}
	\crefname{definition}{Definition}{Definitions}
	\crefname{model}{Model}{Models}
	\crefname{assumption}{Assumption}{Assumptions}
	
	\usepackage[createShortEnv, commandRef=Cref]{proof-at-the-end}
	\pgfkeys{/prAtEnd/global custom defaults/.style={
			text link=}
	}
	
	\makeatletter
	\def\thmt@innercounters{section,equation,theorem}
	\makeatother

	\newcommand{\norm}[1]{\left\lVert#1\right\rVert}
	\newcommand{\normtwo}[1]{\norm{#1}_{2}}
	\newcommand{\normone}[1]{\norm{#1}_1}
	\newcommand{\norminfty}[1]{\norm{#1}_\infty}
	\newcommand{\inner}[2]{\left\langle #1,#2 \right\rangle}
	\newcommand*{\normop}[1]{\norm{#1}_{\mathrm{op}}}
	\newcommand{\restr}[2]{\left.#1\right|_{#2}}
	\newcommand{\ceil}[1]{\left\lceil #1 \right\rceil}
	
	\newcommand{\EX}{\mathbb{E}}
	\newcommand{\probP}{\text{I\kern-0.15em P}}
	\newcommand\iid{\overset{\mathrm{i.i.d.}}{\sim} }
	\newcommand{\indep}{\perp \!\!\! \perp}
	
	\newcommand{\maximize}{\mathop{\textrm{maximize}}}
	\newcommand{\minimize}{\mathop{\textrm{minimize}}}
	\DeclareMathOperator*{\argmin}{\textbf{arg}\,\textbf{min}}
	
	\DeclareMathOperator{\Tr}{Tr}
	\DeclareMathOperator{\RE}{RE}
	\DeclareMathOperator{\OPT}{OPT}
	\DeclareMathOperator{\opt}{opt}
	\DeclareMathOperator{\conv}{conv}
	\DeclareMathOperator{\vectorize}{vec}
	\DeclareMathOperator{\rank}{rank}
	\newcommand{\err}{\mathrm{Err}}
	\newcommand{\Cov}{\mathrm{Cov}}
	\DeclareMathOperator{\diag}{diag}
	\DeclareMathOperator{\diam}{diam}
	
	\newcommand{\outlier}{\text{out}}
	\newcommand{\inlier}{\text{in}}
	\newcommand{\interval}{\text{interval}}
	\newcommand{\tol}{\text{tol}}
	\newcommand{\prev}{\text{prev}}
	\newcommand{\grad}{\text{grad}}
	\newcommand{\param}{\text{param}}
	\newcommand{\dgmm}{\text{DGMM}}
	\newcommand{\gmm}{\text{GMM}}
	\newcommand{\sgr}{\text{SGR}}
	\newcommand{\sgrgmm}{\text{SGR-GMM}}
	\newcommand{\mm}{\text{MM}}
	\newcommand{\lbfgs}{\text{L-BFGS}}
	
	\newcommand{\cB}{\mathcal B}
	\newcommand{\cD}{\mathcal D}
	\newcommand{\cI}{\mathcal I}
	\newcommand{\cM}{\mathcal M}
	\newcommand{\cN}{\mathcal N}
	\newcommand{\cO}{\mathcal O}
	\newcommand{\cX}{\mathcal X}
	
	\newcommand{\1}{\mathbbm{1}}
	\newcommand{\N}{\mathbb N}
	\newcommand{\R}{\mathbb R}
	
	\newcommand{\ba}{\mathbf{a}}
	\newcommand{\bg}{\mathbf{g}}
	\newcommand{\bw}{\mathbf{w}}
	\newcommand{\by}{\mathbf{y}}
	\newcommand{\bz}{\mathbf{z}}
	\newcommand{\bX}{\mathbf{X}}
	\newcommand{\bY}{\mathbf{Y}}
	
	\newcommand{\bpi}{\bm{\pi}}
	\newcommand{\btheta}{\bm{\theta}}
	\newcommand{\bkappa}{\bm{\kappa}}
	\newcommand{\bmu}{\bm{\mu}}
	\newcommand{\bxi}{\bm{\xi}}
	\newcommand{\bDelta}{\bm{\Delta}}
	
	\newcommand{\frakD}{\mathfrak{D}}
	
	\newlength{\dhatheight}
	\newcommand{\doublewidehat}[1]{%
		\settoheight{\dhatheight}{\ensuremath{\widehat{#1}}}%
		\addtolength{\dhatheight}{-0.2ex}%
		\widehat{\vphantom{\rule{1pt}{\dhatheight}}%
			\smash{\widehat{#1}}}}
	
	\allowdisplaybreaks
\begin{document}

\title{Robust Moment-Based Estimation \\ via Spectral Gradient Reweighting}
\author{Liu Zhang\thanks{Program in Applied and Computational Mathematics, Princeton University, Princeton, NJ 08544 USA (\href{mailto:lz1619@princeton.edu}{lz1619@princeton.edu}).}
	\and Amit Singer\thanks{Department of Mathematics and Program in Applied and Computational Mathematics, Princeton University, Princeton, NJ 08544 USA (\href{mailto:amits@math.princeton.edu}{amits@math.princeton.edu}).}
}
\date{}
\maketitle
\begin{abstract}
	Moment-based estimation is a theoretically attractive approach to parametric inference, especially when likelihood-based estimation is unavailable, misspecified, or computationally inconvenient. However, the moment equations involve sample averages, which makes moment-based estimation sensitive to outliers. We propose the SGR-GMM algorithm, a robust generalized method of moments (GMM) procedure that uses a spectral gradient reweighting (SGR) primitive to soft-reweight the per-observation gradients during the moment-matching optimization.  Our analysis has three layers. First, for a fixed center, the SGR primitive is formulated as an entropy-regularized spectral game between a sample-weight player and a density-matrix player, which is analyzed using classical multiplicative-weights and matrix-multiplicative-weights regret bounds. Second, we establish explicit convergence radius and finite termination bound for the fixed-center updates in the SGR primitive. Third, we prove a local finite-sample parameter estimation error bound with explicit dependence on the contamination fraction, inlier gradient stability, local GMM identification strength, and optimization accuracy. We further specialize the SGR-GMM algorithm to obtain a robust diagonally-weighted GMM (DGMM) estimator for estimating heteroscedastic low-rank Gaussian mixtures observed under additive Gaussian noise and strong contamination. In the numerical experiments, the SGR primitive produces nearly-oracle gradient estimation and the robust DGMM specialization substantially improves over non-robust moment baselines. The code and data are available at \url{https://github.com/liu-lzhang/sgr-gmm}.
\end{abstract}

\textbf{Keywords:} generalized method of moments, robust statistics, spectral algorithms, Gaussian mixture models, density-matrix multiplicative weights update
\tableofcontents

\section{Introduction}
\label{sec:intro}

\subsection{Motivations}

Moment-based estimation is one of the standard approaches to obtain a computable estimator from a parametric statistical model. This includes the classical method of moments \cite{pearson1894contributions} and its subsequent extension, the generalized method of moments (GMM) \cite{hansen1982large}. In the GMM framework, the true parameter $\btheta^\star$ is identified as the solution to a system of polynomial equations in $\btheta$, given by the population moment conditions:
\begin{equation}
	\label{eq:intro-population-moment}
	\EX\left[g(\btheta; \bY)\right] = \bm{0} \in \R^q,
\end{equation}
where $g(\btheta; \bY) \in \R^q$ is a suitable moment function in $\btheta \in \Theta\subset\R^{p}$ (the parameter) and $\bY \in \R^d$ (the observed random variable). Given access to a set of observations $\{\by_n\}_{n=1}^N$, the GMM estimator is obtained by replacing the population moment conditions in \cref{eq:intro-population-moment} by their sample analogues and solving a weighted moment-matching optimization problem. One of the key theoretical advantages of moment-based estimation is that one can specify the estimating equations without needing to compute the full likelihood, which is especially useful when likelihood-based estimation is unavailable, misspecified, or computationally inconvenient. However, the sample moment conditions are statistical averages, which are sensitive to even a small fraction of outliers.

Such sensitivity to outliers has long been formalized in robust statistics, dating back to Huber's seminal work \cite{huber1964robust}, which introduced M-estimators for univariate location estimation. Subsequent progress includes influence functions and breakdown point \cite{hampel1968contributions}, robust testing \cite{huber1965robusttest, ronchetti1979robusttest}, and finite-sample breakdown \cite{donohohuber1983breakdown}. For a comprehensive review on robust statistics, see \cite{loh2025review}. More recently, a line of works on algorithmic robust statistics extends the classical theory by emphasizing that robust estimators should be computable without exponential search, in addition to tolerating outliers. Among the results in this direction, e.g., \cite{lai2016agnostic, diakonikolas2019robust, diakonikolas2019sever, dalalyan2022allinone, diakonikolas2023algorithmic}, a central algorithmic principle is that if an adversarial subset changes the mean substantially, then it must create a large direction in the empirical covariance matrix. 

In this paper, we apply this spectral principle to the \emph{per-observation gradients} of the GMM moment-matching optimization. This choice is intentional and motivated by two reasons inherent to GMM estimation: first, the observations and moment conditions need not encode the local parameter sensitivity of the estimating equations, whereas the gradients directly determine how contamination enters the first-order numerical optimization; second, in parameter estimation problems, the parameter space dimension $p$ (and hence the dimension of the per-observation gradients) is usually much smaller than the sample space dimension $d$, and even more so than the number of moment conditions $q = d + d^2 + \cdots + d^L$, where $L$ is the highest moment order. Thus, applying the spectral reweighting to the per-observation gradients is much less computationally intensive compared to working directly on the sample space or on the full moment condition vector. This viewpoint is conceptually consistent with previous theoretical results showing the advantages of robust gradient estimation, including \cite{prasad2020robustgradient, cheng2020high, zhu2022quasigradients}. 

We work under the strong $\epsilon$-contamination model, which is the same paradigm used in modern high-dimensional robust statistics; see, e.g., \cite[Definition 1.1]{cheng2020high}, \cite[Definition 1]{dalalyan2022allinone}, \cite[Section 3]{rohatgi2022robust}:
\begin{model}[Strong $\epsilon$-contamination model]
	\label{model:strong-contamination}
	Fix the contamination fraction $0 \leq \epsilon < 1$.  We say that a target parametric distribution $\cD_{\btheta^\star}$ is observed in the presence of \emph{\textbf{strong $\epsilon$-contamination}} if there is an adversary that inspects the full clean sample $\{\by_n\}_{n=1}^N \subset \R^d \iid \cD_{\btheta^\star}$ and replaces at most  $\epsilon N$ observations by arbitrary points in $\R^d$. The data generating process is the following:
	\begin{align}
		\check{\by}_n = \begin{cases*}
			\by_n \in \R^d, & (if $n \in \cI_{\inlier}$)\\
			\ba_n \in \R^d, & (if $n \in  \cI_{\outlier}$)
		\end{cases*},
		\quad [N] = \cI_{\inlier} \sqcup \cI_{\outlier}, \quad |\cI_{\outlier}| \leq \epsilon N.
	\end{align}
\end{model}

\subsection{Contributions}

Our contributions cover algorithmic, theoretical, and numerical aspects:

\begin{enumerate}
	\item \textbf{\cref{alg:spectral-reweighting}: Spectral gradient reweighting (SGR) primitive with regret bound analysis and convergence analysis.}
	\cref{alg:spectral-reweighting} is a soft-reweighting primitive for a given gradient cloud \(\{\check\bg_n^{(k)}\}_{n=1}^N\).  For a fixed center, \cref{thm:spectral-norm-after-mw-mmw} proves the regret bound 
	\begin{equation}
		\gamma(\widehat\bw^{[s]};\widehat\bmu^{[s]})-\OPT(\widehat\bmu^{[s]})
		\leq 4\nu\left\{\sqrt{\frac{\log(1/(1-\epsilon))}{T}}+\sqrt{\frac{\log p}{T}}\right\},
	\end{equation}
	where \(\nu\) is the squared diameter of the gradient cloud.  This is the same mirror-descent scale as the multiplicative-weights and online-convex-optimization bounds in  \cite[Theorem~5.1]{arora2012multiplicative}, \cite[Theorem~4.2]{bubeck_convex_2015}, and \cite[Chapter~2]{hazan_introduction_2023}. Under a deterministic inlier stability condition \cref{ass:inlier-stability}, \cref{thm:outer-loop-convergence} proves the convergence of the fixed-center updates
	\begin{equation}
		e^{[s+1]}
		\le
		\alpha_\epsilon e^{[s]}+R_{\epsilon,T},
		\qquad
		\alpha_\epsilon=\sqrt{\frac{\epsilon}{1-2\epsilon}},
		\label{eq:intro-contraction}
	\end{equation}
	which explains the theoretical threshold $\epsilon<1/3$. \cref{thm:sgr-termination-error} gives an explicit finite outer-loop termination bound and the corresponding robust gradient mean estimation error.
	
	\item \textbf{\cref{alg:robust-sgr-gmm}: Robust SGR-based GMM (SGR-GMM) algorithm with local finite-sample analysis.}
	\cref{alg:robust-sgr-gmm} uses the SGR primitive to ``robustify'' the per-observation moment gradients in the GMM moment-matching optimizer.  Under standard GMM local identification conditions \cref{ass:local-identification}, the high-probability inlier stability conditions \cref{ass:inlier-stability-high-prob}, and the numerical optimizer conditions \cref{ass:numerical-optimizer-conditions}, \cref{thm:finite-sample-parameter} proves
	\begin{align}
		\label{eq:intro-main-error-bound}
		\normtwo{\widehat\btheta^{(\sgrgmm)}-\btheta^\star}
		\leq
		\frac{2}{\lambda^\star}
		\left(
		\underbrace{\sum_{k=1}^L a_k\{\delta_{\mu,k}(\zeta)+\alpha_\epsilon\sqrt{C_k}\}}_{\text{SGR error}}
		+
		\underbrace{\delta_{\opt}}_{\text{optimizer error}}
		\right),
		\qquad
		\alpha_\epsilon=\sqrt{\frac{\epsilon}{1-2\epsilon}}.
	\end{align}
	At a high level, the local finite-sample parameter estimation error of \cref{alg:robust-sgr-gmm} decomposes into the robust gradient-estimation error from \cref{alg:spectral-reweighting} and the numerical-optimization residual. More explicitly, the final error depends on the contamination fraction, inlier gradient stability, local GMM identification strength, and optimization accuracy. 
	
	\item \textbf{\cref{alg:robust-dgmm}: Robust DGMM specialization for Gaussian mixture modeling. }
	\cref{alg:robust-dgmm} specializes SGR-GMM to the diagonally-weighted GMM (DGMM) estimator introduced in \cite{zhang2025diagonally} for heteroscedastic low-rank Gaussian mixtures observed under additive Gaussian noise and strong $\epsilon$-contamination. This specialization builds on the DGMM framework of \cite{zhang2025diagonally}, uses the robust SGR-weighted per-observation gradients, and updates the order-wise weights using the robust objective.
	
	\item To verify the algorithms and their theoretical analyses, we implement numerical experiments and observe that the primitive \cref{alg:spectral-reweighting} produces nearly-oracle gradient estimation and the specialization \cref{alg:robust-dgmm} substantially improves over non-robust moment baselines.
\end{enumerate}

\subsection{Related works}
\label{subsec:related}

\paragraph{Classical GMM theory.}
The local identification and monotonicity argument in \cref{sec:robust-sgr-gmm} is a finite-sample analogue of the rank and differentiability conditions used in classical GMM theory, see e.g., \cite[Sections~2-3]{hansen1982large}, \cite[Theorems~2.1 and 3.4]{newey1994large}, and \cite[Chapter 3]{hall2004generalized}. In this paper, we focus on the local deterministic part of the classical GMM theory.

\paragraph{Optimization and IRLS.}
Our convergence analysis for the fixed-center updates is conceptually motivated by iteratively reweighted least squares (IRLS). In this direction,  \cite{lermanmaunu2018fast} analyzes the fast median subspace algorithm for robust subspace recovery and  \cite{lerman2025global} proves global convergence of a dynamically smoothed IRLS variant under deterministic inlier-outlier conditions. Our proof strategy shares a similar spirit in that we also proves contraction of an interpretable geometric error
under deterministic inlier-outlier conditions
once the weights are controlled by a spectral certificate. The difference is that the geometry of interest in our setting is the covariance of moment gradients in parameter space, rather than distance to a subspace on a Grassmannian.   In our convergence analysis, we use standard optimization terminology from \cite{luenbergerlinear2008} and \cite{dennisschnabel1996numerical}. 

\paragraph{Algorithmic robust statistics.}
The core algorithmic principle in our SGR primitive is motivated by the spectral reweighting and spectral filtering algorithms for robust mean estimation: \cite{dalalyan2022allinone} gives an iteratively reweighted Gaussian-mean estimator with breakdown and asymptotic-efficiency guarantees; \cite{diakonikolas2019sever} filters per-observation gradients generated by a base learner; \cite{hopkins2020robust} and \cite{dong2019quantum} use spectral filtering and quantum entropy scoring to obtain fast robust mean estimation; \cite{rohatgi2022robust} adapts \cite{diakonikolas2019sever} to the GMM base learner. Our proposed \cref{alg:robust-sgr-gmm} differs from these approaches in two strucutral respects: first, \cref{alg:robust-sgr-gmm} uses soft-reweighting with capped-simplex weights instead of hard-filtering; second,  in \cref{alg:robust-sgr-gmm}, the SGR weights are iteratively recomputed for the per-observation gradients of the GMM objective, and as a result, the robustification is integrated into the moment-matching optimization rather than appended as a black-box filtering layer. 

Our choice of applying this spectral principle to the per-observation moment gradients is consistent with previous theoretical results for robust mean estimation, including \cite{prasad2020robustgradient} which analyzes projected gradient descent for convex risk minimization, \cite{cheng2020high} which analyzes the nonconvex robust-mean objective by gradient descent, and \cite{zhu2022quasigradients} which explains why nonconvex robust-estimation landscapes can remain algorithmically tractable through generalized quasi-gradients. 

\subsection{Organization}
\cref{sec:preliminaries} introduces notation and reviews preliminaries on GMM and entropy-regularized spectral games. \cref{sec:robust-sgr-gmm} gives the SGR-GMM algorithm. We prove regret bound and the convergence and termination of \cref{alg:spectral-reweighting}. We then give a local finite-sample GMM analysis for \cref{alg:robust-sgr-gmm}, proving a parameter estimation error bound. For clarity, proofs for \cref{sec:robust-sgr-gmm} are deferred to \cref{app-sec:robust-sgr-gmm}. \cref{sec:robust-dgmm} develops the robust DGMM specialization for estimating heteroscedastic low-rank Gaussian mixtures with additive Gaussian noise and strong $\epsilon$-contamination. \cref{sec:numerical-experiments} reports the numerical results.

\section{Preliminaries}
\label{sec:preliminaries}

\subsection{Notation}
For $N\in\N$, write $[N]=\{1,\ldots,N\}$.  For a symmetric matrix $A$, $\normop{A}$ is its largest absolute eigenvalue if $A$ is indefinite and its largest eigenvalue if $A\succeq0$.  The trace inner product is $\inner{A}{B}=\Tr(A^\top B)$.

\subsection{Method of moments (MM) and generalized method of moments (GMM)}
\label{sec:mm-gmm}
Let $\bY \sim \cD_{\bY} \in \R^d$ be a vector of random variables with the distribution $\cD_{\bY}$ parameterized by $\btheta^\star\in\Theta\subset\R^p$. For the purpose of this paper, moment function is given by
\begin{align}\label{eq:moment-function}
	g(\btheta; \bY) \coloneq &\left(\underbrace{\vectorize(\cM^{(1)}\left({\btheta}\right) - {\bY})^\top}_{g_1(\btheta; \bY)}; \cdots; \underbrace{\vectorize(\cM^{(L)}\left({\btheta}\right) - {\bY}^{\otimes L})^\top}_{g_L(\btheta; \bY)} \right)^\top \in \R^{q}, \mkern9mu g_k(\btheta;\bY)\in\mathbb R^{q_k},
\end{align}
where $q = d + d^2 + \cdots + d^L$, $L$ is the highest moment order, and $\cM^{(k)}({\btheta})$ denotes the $k$-th population moment. The corresponding population moment condition is
\begin{equation}
	m(\btheta) \coloneq \EX[g(\btheta; \bY)], \mkern9mu m_k(\btheta) \coloneq \EX[g_k(\btheta; \bY)],
\end{equation}
and the corresponding Jacobian matrix is 
\begin{equation}
	G(\btheta) \coloneq \nabla_{\btheta} m(\btheta) \in \R^{q\times p}, \mkern9mu G_k(\btheta) \coloneq \nabla_{\btheta} m_k(\btheta) \in \R^{q_k\times p}.
\end{equation}

Given a set of $\epsilon$-contaminated observations $\{\check{\by}_n\}_{n=1}^N$ from a target parametric distribution $\cD_{\btheta^\star}$, the GMM estimator of the parameter $\btheta$, denoted by $\widehat{{\btheta}}^{(\gmm)}$, is obtained by replacing the population moments by their empirical averages and minimizing a weighted quadratic discrepancy:
\begin{equation}
	\begin{aligned}\label{eq:gmm-optimization}
		\widehat{{\btheta}}^{(\gmm)} =	\argmin_{{\btheta} \in \Theta} & \mkern9mu \overline{g}_N\left({\btheta}\right)^T W \overline{g}_N\left({\btheta}\right) \eqcolon Q_N(\btheta),
	\end{aligned} 
\end{equation}
where $\overline{g}_N\left({\btheta}\right)$ is the vector of sample moment conditions
\begin{align}\label{eq:mom-conditions}
	\overline{g}_N\left({\btheta}\right) \coloneq \frac{1}{N} \sum_{n=1}^N g(\btheta; \check{\by}_n)  \in \R^{q},
\end{align}
and  $W\in \R^{q\times q}$ is a symmetric positive semi-definite weighting matrix . When $W= I$, $\widehat{{\btheta}}^{(\gmm)}$ is equivalent to the MM estimator, denoted by  $\widehat{{\btheta}}^{(\mm)}$.  

We define the following quantities:
\begin{itemize}
	\item the inlier moment gradient of the $k$-order moment-matching objective:
	\begin{align}
		\label{eq:sample-gradient-inlier}
		\bg_n^{(k)}(\btheta) \coloneq G_k (\btheta)^\top W_k g(\btheta; \by_n),
	\end{align}
	\item the population mean of the inlier moment gradients:
	\begin{align}
		\label{eq:population-gradient-mean}
		\bmu_{\bg}^{(k)}(\btheta) \coloneq \EX\left[	\bg_n^{(k)}(\btheta) \right],
	\end{align}
	\item the population covariance of the inlier moment gradients:
	\begin{align}
		\label{eq:population-gradient-cov}
		\Sigma_{\bg}^{(k)} (\btheta) \coloneq \Cov\left[		\bg^{(k)}(\btheta) \right] = \EX\left[\left(	\bg_n^{(k)}(\btheta) - \bmu_{\bg}^{(k)}(\btheta) \right)\left(	\bg_n^{(k)}(\btheta) - \bmu_{\bg}^{(k)}(\btheta) \right)^{\top}\right],
	\end{align}
	\item the $\epsilon$-contaminated moment gradients of the $k$-order moment-matching objective:
	\begin{align}
		\label{eq:sample-gradient-contaminated}
		\check{\bg}_n^{(k)}(\btheta) \coloneq G_k (\btheta)^\top W_k g(\btheta; \check{\by}_n),
	\end{align}
	\item the sample gradient covariance restricted to the index set $\cI$ as
	\begin{align}
		\label{eq:adv-gradient-cov}
		\restr{\check{S}_{\bg}^{(k)}(\btheta)}{\cI} \coloneq \frac{1}{|\cI|} \sum_{n\in \cI} \left(\check{\bg}_n^{(k)} (\btheta) - \restr{\overline{\check{\bg}^{(k)}(\btheta)}}{\cI} \right)
		\left(\check{\bg}_n^{(k)}(\btheta) - \restr{\overline{\check{\bg}^{(k)}(\btheta)}}{\cI} \right)^{\top},
	\end{align}
	where the sample mean restricted to the index set $\cI$ is
	\begin{align}
		\restr{\overline{\check{\bg}^{(k)}(\btheta)}}{\cI}  \coloneq \frac{1}{|\cI|} \sum_{n\in \cI}  \check{\bg}_n^{(k)} (\btheta).
	\end{align}
	\begin{remark}
		We use the restriction notation to highlight the index set $\cI$ over which the sample mean (resp. the sample covariance) is taken. Whenever we omit the restriction notation, we implicitly imply that the sample mean (resp. the sample covariance) is taken over all $N$ observations. To avoid notational clutter, we suppress the dependency on $\btheta$ when it is clear from the context. We only highlight the dependency on $\btheta$ when the quantities of interest vary with respect to $\btheta$.  
	\end{remark}
\end{itemize}

\subsection{von Neumann entropy and matrix multiplicative weights (MMW) algorithm}

We recall the entropy-regularized matrix multiplicative-weights (MMW) facts used by the dual spectral player.  The exposition follows  \cite{ruskai2002quantum, nielsen2010quantum, tian2025mmw}.
\begin{definition}[Density matrix]
	\label{def:density-matrix}
	The set of density matrices of dimension $p$ is defined as
	\begin{align}
		\frakD_p \coloneq \{\rho \in \R^{p\times p}: \rho \succeq 0, \Tr[\rho] = 1\}.
	\end{align}
\end{definition}

\begin{definition}[von Neumann entropy]
	\label{def:von-Neumann-entropy}
	The von Neumann entropy of a density matrix $\rho$ is defined as 
	\begin{align}
		S(\rho) \coloneq - \Tr[\rho \log \rho].
	\end{align}
\end{definition}

\begin{lemma}[Gibbs state maximizes entropy-regularized linear functional]
	\label{lemma:Gibbs-state}
	Let $H\in R^{p\times p }$ be symmetric and let $\eta > 0$. Then, the density matrix given by the Gibbs state of $H$
	\begin{align}
		\rho= \frac{\exp\left\{\eta H\right\} }{\Tr\left[\exp\left\{\eta H\right\} \right]},
	\end{align}
	is the unique maximizer of the von Neumann entropy regularized convex program: 
	\begin{align}
		\maximize_{\rho \in \cD} \quad \eta \Tr[H\rho] + S(\rho).
	\end{align}
	Equivalently, $\rho$ is the unique minimizer of
	\begin{align}
		\minimize_{\rho \in \cD} \quad -\eta \Tr[H\rho] -S(\rho).
	\end{align}
\end{lemma}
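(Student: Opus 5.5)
The plan is the standard Gibbs variational argument via relative entropy. Write the Gibbs state as $\rho_\star = \exp\{\eta H\}/Z$ with $Z = \Tr[\exp\{\eta H\}]$. First I check that $\rho_\star \in \frakD_p$. Since $H$ is symmetric, $\exp\{\eta H\}$ is symmetric positive definite, so $\rho_\star \succ 0$ and $\Tr[\rho_\star]=1$. Moreover $\log\rho_\star = \eta H - (\log Z) I$ is well defined.

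The key identity is obtained by fixing any $\rho \in \frakD_p$ and computing the quantum relative entropy
\begin{align}
D(\rho\,\|\,\rho_\star) \coloneq \Tr[\rho\log\rho] - \Tr[\rho\log\rho_\star] = -S(\rho) - \eta\Tr[H\rho] + \log Z ,
\end{align}
where the last equality uses $\Tr[\rho]=1$. For $\rho$ with zero eigenvalues I interpret $\Tr[\rho\log\rho]$ with the convention $0\log 0 = 0$. Rearranging gives
\begin{align}
\eta\Tr[H\rho] + S(\rho) = \log Z - D(\rho\,\|\,\rho_\star).
\end{align}
At $\rho=\rho_\star$ the relative entropy vanishes, so the objective equals $\log Z$ there. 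Thus both maximality and uniqueness reduce to Klein's inequality: $D(\rho\|\rho_\star)\ge 0$, with equality if and only if $\rho=\rho_\star$.

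This step is the main obstacle, and I would prove it directly. Write the spectral decompositions $\rho=\sum_i p_i u_iu_i^\top$ and $\rho_\star=\sum_j r_j v_jv_j^\top$, with $r_j>0$, and set $P_{ij}=(u_i^\top v_j)^2$. The matrix $P$ is doubly stochastic, and
\begin{align}
D(\rho\,\|\,\rho_\star)=\sum_{i,j}P_{ij}\,\bigl(p_i\log p_i - p_i\log r_j - p_i + r_j\bigr),
\end{align}
because $\sum_{i,j} P_{ij}(r_j - p_i) = \Tr\rho_\star - \Tr\rho = 0$. Each bracket equals $f(p_i)-f(r_j)-f'(r_j)(p_i-r_j)$ for the strictly convex function $f(x)=x\log x$, so each bracket is nonnegative. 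Hence $D\ge 0$.

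For the equality case, suppose $D=0$. Then every term with $P_{ij}>0$ must have $p_i=r_j$. It follows that $\rho v_j = r_j v_j$ for each $j$: expanding $v_j$ in the basis $\{u_i\}$ only involves indices with $P_{ij}>0$, and these all have eigenvalue $r_j$. Since $\{v_j\}$ is a basis, $\rho=\rho_\star$.

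Finally, the minimization formulation is the same problem with the objective negated, so the same $\rho_\star$ is its unique minimizer. I would also note that the paper's $\cD$ in the statement denotes $\frakD_p$.
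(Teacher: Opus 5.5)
Your proposal is correct and follows the same route as the paper. Both rewrite the objective as $\log Z - D(\rho\,\|\,\rho_\star)$ and conclude from Klein's inequality together with its equality case. The only difference is that the paper cites Klein's inequality (Ruskai; Nielsen--Chuang, Theorem 11.7), while you prove it yourself via the doubly stochastic overlap matrix and the Bregman divergence of $x\log x$, and that proof, including the equality argument, is sound.
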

\begin{proof}
	This is a standard consequence of the nonnegativity of quantum relative entropy  (Klein's inequality) (cf.~\cite[Theorem 3]{ruskai2002quantum}, \cite[Theorem 11.7]{nielsen2010quantum}). 
\end{proof}
\begin{theorem}[MMW regret bound]
	\label{thm:mmw-prelim}
	Let \(A_1,\ldots,A_T\in\R^{p\times p}\) be symmetric matrices satisfying \(\normop{A_t}\le \nu\).  Set \(\rho^{[1]}=I_p/p\) and
	\begin{align}
		\rho^{[t]}=
		\frac{\exp\{\eta_\rho\sum_{r=1}^{t}A_r\}}
		{\Tr\exp\{\eta_\rho\sum_{r=1}^{t}A_r\}}.
	\end{align}
	If \(0<\eta_\rho\nu\le 1\), then for every \(\rho\in\frakD_p\),
	\begin{align}
		\frac1T\sum_{t=1}^T\inner{A_t}{\rho}
		-\frac1T\sum_{t=1}^T\inner{A_t}{\rho^{[t]}}
		\le
		\frac{\log p}{\eta_\rho T}+\eta_\rho\nu^2 .
	\end{align}
	In particular, the choice \(\eta_\rho\asymp \nu^{-1}\sqrt{\log(p)/T}\) gives average regret \(O(\nu\sqrt{\log(p)/T})\).
\end{theorem}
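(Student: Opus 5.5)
We prove the bound with the standard trace-exponential potential argument (Golden--Thompson plus a quadratic bound on the exponential), reading the update in the usual follow-the-regularized-leader form
\begin{align}
	\rho^{[t]}=\frac{\exp\{\eta_\rho S_{t-1}\}}{\Tr\exp\{\eta_\rho S_{t-1}\}},
	\qquad S_t\coloneq\sum_{r=1}^{t}A_r,\quad S_0=0 .
\end{align}
This reading is forced by the convention $\rho^{[1]}=I_p/p$, which is the case $S_0=0$; the displayed formula with the sum running to $t$ does not reproduce it at $t=1$. By \cref{lemma:Gibbs-state}, $\rho^{[t]}$ is the entropy-regularized maximizer of $\inner{S_{t-1}}{\rho}$. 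Write $\eta=\eta_\rho$ and define the potential $\Phi_t\coloneq\Tr\exp\{\eta S_t\}$, so that $\Phi_0=p$.

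\textbf{Step 1 (upper bound on the potential growth).} By the Golden--Thompson inequality $\Tr e^{X+Y}\le\Tr(e^Xe^Y)$ for symmetric $X,Y$,
\begin{align}
	\Phi_t\le\Tr\big(e^{\eta S_{t-1}}e^{\eta A_t}\big)=\Phi_{t-1}\Tr\big(\rho^{[t]}e^{\eta A_t}\big).
\end{align}
The hypothesis $\normop{A_t}\le\nu$ together with $\eta\nu\le1$ gives $\normop{\eta A_t}\le1$. On $[-1,1]$ we have the scalar inequality $e^x\le1+x+x^2$, and applying it through the spectral decomposition of $\eta A_t$ yields $e^{\eta A_t}\preceq I+\eta A_t+\eta^2A_t^2$. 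Since also $A_t^2\preceq\nu^2I$ and $\rho^{[t]}\succeq0$ with unit trace,
\begin{align}
	\Tr\big(\rho^{[t]}e^{\eta A_t}\big)\le1+\eta\inner{A_t}{\rho^{[t]}}+\eta^2\nu^2\le\exp\big\{\eta\inner{A_t}{\rho^{[t]}}+\eta^2\nu^2\big\}.
\end{align}
Telescoping over $t=1,\ldots,T$ and using $\Phi_0=p$ gives
\begin{align}
	\log\Phi_T\le\log p+\eta\sum_{t=1}^T\inner{A_t}{\rho^{[t]}}+T\eta^2\nu^2 .
\end{align}

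\textbf{Step 2 (lower bound on the potential).} Since $\Phi_T$ is a sum of the positive numbers $e^{\eta\lambda_i(S_T)}$, it is at least $e^{\eta\lambda_{\max}(S_T)}$. Moreover, for every $\rho\in\frakD_p$ we have $\lambda_{\max}(S_T)\ge\inner{S_T}{\rho}=\sum_{t=1}^T\inner{A_t}{\rho}$. Hence $\log\Phi_T\ge\eta\sum_{t=1}^T\inner{A_t}{\rho}$.

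\textbf{Step 3 (conclusion).} Combining Steps 1 and 2 and dividing by $\eta T$ gives exactly the claimed bound
\begin{align}
	\frac1T\sum_{t=1}^T\inner{A_t}{\rho}-\frac1T\sum_{t=1}^T\inner{A_t}{\rho^{[t]}}\le\frac{\log p}{\eta T}+\eta\nu^2 .
\end{align}
Now choose $\eta=\nu^{-1}\sqrt{\log(p)/T}$. This balances the two terms and gives average regret $2\nu\sqrt{\log(p)/T}$. This choice satisfies $\eta\nu\le1$ whenever $T\ge\log p$. If instead $T<\log p$, the regret is trivially at most $2\nu\le2\nu\sqrt{\log(p)/T}$, so the $O(\cdot)$ claim holds for all $T$.

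\textbf{Main obstacle.} The only genuinely noncommutative step is Step 1. The matrices $S_{t-1}$ and $A_t$ need not commute, so $e^{\eta S_t}$ cannot be factored directly; Golden--Thompson is what allows the potential to be split. The quadratic operator inequality $e^{\eta A}\preceq I+\eta A+\eta^2A^2$ is safe because it involves functions of a single symmetric matrix, so it follows from the scalar inequality eigenvalue by eigenvalue.
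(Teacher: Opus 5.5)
Your proof is correct, but it takes a different route from the paper. The paper gives no self-contained argument. It treats the update as mirror descent on $\frakD_p$ with the von Neumann entropy as mirror map, uses that this regularizer is $1$-strongly convex with respect to the trace norm with Fenchel conjugate $\log\Tr\exp(\cdot)$, and cites the generic mirror-descent regret theorem. You instead run the classical trace-exponential potential argument. Golden--Thompson is your only noncommutative input, and the scalar bound $e^x\le 1+x+x^2$ on $[-1,1]$ is the only place the hypothesis $\eta_\rho\nu\le 1$ enters.

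\textbf{What the paper's route buys.} The strong-convexity bound holds for every $\eta_\rho>0$, so the step-size restriction is superfluous there, and it carries over verbatim to other mirror maps. Its key input, however, is strong convexity of the von Neumann entropy in trace norm (a quantum Pinsker inequality, or dually the operator-norm smoothness of $\log\Tr\exp$). That is a nontrivial matrix inequality, and the paper outsources it to the reference.

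\textbf{What your route buys.} It is elementary modulo Golden--Thompson. If you stop before invoking $A_t^2\preceq\nu^2 I$, it gives the sharper second-moment bound $\sum_t\inner{A_t}{\rho}-\sum_t\inner{A_t}{\rho^{[t]}}\le \log p/\eta_\rho+\eta_\rho\sum_t\inner{A_t^2}{\rho^{[t]}}$. This is exactly the form the paper later needs in the proof of \cref{thm:regret-bound-dual}, where $(S^{[s,t]})^2\preceq\nu S^{[s,t]}$ turns it into the multiplicative $(1+\eta_\rho\nu)$ bound. The worst-case $\normop{A_t}^2$ term from the strong-convexity argument does not give that directly.

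\textbf{On the indexing.} You are right that the displayed update (sum through $t$) is inconsistent with $\rho^{[1]}=I_p/p$, and your $S_{t-1}$ reading is the standard one. Nothing hinges on that choice, though. The literal reading is what \cref{alg:spectral-reweighting} actually implements, and there the bound holds a fortiori. Convexity of $\log\Tr\exp$ gives $\log\Phi_t-\log\Phi_{t-1}\le\eta_\rho\inner{A_t}{\rho^{[t]}}$ directly, so the cumulative regret is at most $\log p/\eta_\rho$. Replacing the first iterate by $I_p/p$ costs at most an extra $\eta_\rho\nu^2$.
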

\begin{proof}
	This is the standard mirror-descent regret bound with the von Neumann entropy regularizer. The von Neumann entropy is one-strongly convex in trace norm and that its Fenchel dual is \(\log\Tr\exp(\cdot)\).  Applying mirror descent over the spectraplex gives the claimed inequality \cite[Corollary~1 and Theorem~1]{tian2025mmw}.
\end{proof}

\section{Robust GMM estimation via spectral gradient reweighting}
\label{sec:robust-sgr-gmm}
In this section, we introduce and analyze the robust GMM estimation algorithm. For clarity, we defer all proofs to \cref{app-sec:robust-sgr-gmm}. 

Suppose we have a weight distribution supported on the capped simplex (similar to the definitions in \cite{ zhu2022quasigradients}):
\begin{align}
	\bw \in \Delta_{N,\epsilon}\coloneq \left\{ \bw = (w_1, \dots, w_{N})^{\top} \in \R^{N} \colon \normone{\bw} = 1, \mkern9mu 0\leq w_n \leq \frac{1}{(1-\epsilon)N} \forall n \right\}.
\end{align}
More generally, for a weight distribution supported on the capped simplex given by a set of indices $\cI$, we use the following notation:
\begin{align}
	\bw \in \Delta_{\cI,\epsilon}\coloneq \left\{ \bw = (w_1, \dots, w_{|\cI|})^{\top} \in \R^{|\cI|} \colon \normone{\bw} = 1, \mkern9mu 0\leq w_n \leq \frac{1}{(1-\epsilon)|\cI|} \forall n \right\}.
\end{align}

For any $\bw\in\Delta_{\cI,\epsilon}$, we first define the weight mass restricted to the index set $\cI$:
\begin{align}
	\tau_{\cI}(\bw) \coloneq \sum_{n\in\cI} w_n.
\end{align}
Then we can define the weighted sample gradient covariance (of the per-observation gradients) as
\begin{align}
	\label{eq:adv-gradient-cov-weighted}
	\restr{\check{S}_{\bg,\bw}^{(k)}}{\cI} \coloneq  \sum_{n\in\cI} w_n  \left(\check{\bg}_n^{(k)} - \restr{\overline{\check{\bg}_{\bw}^{(k)}}}{\cI} \right) \left(\check{\bg}_n^{(k)} - \restr{\overline{\check{\bg}_{\bw}^{(k)}}}{\cI}\right)^{\top},
\end{align}
where the weighted sample mean is
\begin{align}
	\label{eq:adv-gradient-mean-weighted}
	\restr{\overline{\check{\bg}_{\bw}^{(k)}}}{\cI} \coloneq  \frac{1}{	\tau_{\cI}(\bw) } \sum_{n\in\cI} 
	w_n \check{\bg}_n^{(k)}.
\end{align}

A central algorithmic principle underlying previous robust mean estimation works is that if a weight distribution on the capped simplex $\bw\in \Delta_{\cI,\epsilon}$ yields small covariance spectral norm, then the weighted sample mean is close to the population mean. We apply this principle on the sample moment gradient covariance 
$\restr{\check{S}_{\bg,\bw}^{(k)}}{\cI}$ defined in \cref{eq:adv-gradient-cov-weighted} and estimate a good weight distribution $\bw \in \Delta_{\cI,\epsilon}$ so that $\normop{\restr{\check{S}_{\bg,\bw}^{(k)}}{\cI} }$ is small. Formally stated, the robust gradient estimation can be reformulated as a feasibility problem: 
\begin{align}
	\text{to find }  \bw \in \Delta_{\cI,\epsilon}, \text{ such that } \normop{\restr{\check{S}_{\bg,\bw}^{(k)}}{\cI}} = \normop{\sum_{n\in\cI} w_n  \left(\check{\bg}_n^{(k)} - \overline{\check{\bg}_{\bw}^{(k)}} \right) \left(\check{\bg}_n^{(k)} - \overline{\check{\bg}_{\bw}^{(k)}} \right)^{\top}} \leq C_{\text{stop}, k}.
\end{align}

What makes the above feasibility problem difficult is that the map $\bw \mapsto \normop{\restr{\check{S}_{\bg,\bw}^{(k)}}{\cI}}$ is non-convex. However, for a fixed center $\widehat{\bmu}$, the feasibility set is convex in $\bw$:
\begin{align}
	\left\{\bw \in \Delta_{\cI,\epsilon}:   \normop{\sum_{n\in\cI}  w_n  \left(\check{\bg}_n^{(k)} - \widehat{\bmu} \right) \left(\check{\bg}_n^{(k)} - \widehat{\bmu} \right)^{\top}} \leq C_{\text{stop}, k}\right\}.
\end{align}

This motivates the key structure behind the design and analysis of  \cref{alg:robust-sgr-gmm}, which consist of three layers: 
\begin{enumerate}
	\item The overarching idea is to apply the primitive \cref{alg:spectral-reweighting} on the set of per-observation gradients of the GMM moment-matching optimization $\{\check{\bg}_n^{(k)}\}_{n=1}^N$ to find a good weight distribution $\bw \in \Delta_{N,\epsilon}$ every few L-BFGS iterations at each GMM estimation step, so that the adversarial outliers have limited influence on the GMM estimation.
	\item Within the primitive \cref{alg:spectral-reweighting}, we first use an initial guess for the fixed center $\widehat{\bmu}$ to make the feasibility problem convex. This allows us to formulate an entropy-regularized spectral game between a sample-weight player and a density-matrix player, which can be solved using the multiplicative weights-matrix multiplicative weights (MW-MMW) update method (see, e.g., \cite{arora2012multiplicative, bubeck_convex_2015, hazan_introduction_2023}). Intuitively, the dual player $\rho$ aims to pick a direction or a mixture of directions with large projected  under the current weight vector. The primal player $\bw\in\Delta_{N,\epsilon}$ downweights the per-observation gradients that are expensive in that direction. Regret bounds guarantee that the returned average weights ${\overline{\bw}^{[s]}}$ is nearly minimax-optimal for the fixed-center game and therefore approximately minimizes the spectral norm for that given fixed center.
	\item Once we certify that for this fixed center $\widehat{\bmu}$, the MW-MMW rounds produce a good weight vector that is an approximate minimizer ${\overline{\bw}^{[s]}} \approx \OPT(\widehat{\bmu}^{[s]})$, we update the guess for the fixed center and iteratively repeat this process. In our analysis, we provide a fixed-point convergence guarantee and a finite termination guarantee for the fixed-center updates.
\end{enumerate}

\begin{algorithm}[H]
	\renewcommand{\thealgocf}{SGR-GMM}
	\caption{Robust GMM estimation via spectral gradient reweighting.}	
	\label{alg:robust-sgr-gmm}
	\SetKwInput{KwInput}{Input} 
	\SetKwInput{KwOutput}{Output}
	\KwInput{\begin{itemize}
			\item $\epsilon$-contaminated observations $\{\check{\by}_n\}_{n=1}^N$,
			\item hyperparameters: the maximum moment order $L$, the maximum DGMM steps $T_{\gmm}$, the maximum L-BFGS iterations $I_{\lbfgs}$, contamination fraction $\epsilon \in (0, 1/3)$, the MW-MMW step sizes $0<\eta_{\rho}, \eta_{w} \leq 1/2$, the inner iterations $T$, threshold constant $C>0$, target accuracy $\delta > 0$, reweighting interval $I_{\interval}$.
	\end{itemize}}
	\KwOutput{estimated parameters  $\widehat{\btheta} \in \Theta \subset \R^{p}.$}
	\DontPrintSemicolon
	
	Initialize $\btheta^{[0]}$. \;
	
	\For{$t=1,\dots,T_{\gmm}$ or until GMM steps converge}{
		Run moment-matching optimization via L-BFGS:
		\For{$i = 1, \dots, I_{\lbfgs}$ or until L-BFGS iterations converge}{
			For each moment order $k = 1, \dots, L$, evaluate the $\epsilon$-contaminated moment gradients of the GMM moment-matching optimization $\check{\bg}_n^{(k)}$.\;
			
			\If{$i - i_{\prev} \geq I_{\interval}$ or $i - i_{\prev} \geq I_{\min}$ and L-BFGS is locally stabilized \label{alg:robust-sgr-gmm-line-lbfgs}}{
				Update the weight vector on the per-observation gradients $\widehat{\bw}^{(k)}$ $\in \Delta_{N,\epsilon}$ for each moment order $k$ via  \cref{alg:spectral-reweighting}.  \;
				
				Reset L-BFGS memory and continue.\;
			}
			Freezing $\widehat{\bw}^{(k)}$ and continue the L-BFGS iterations using the robust objective function and robust gradient for the moment-matching optimization.
		}
		Use $\widehat{\btheta}^{[t]}$ to initialize the next $(t+1)$-th GMM estimation step.\;
	}
\end{algorithm}

\begin{algorithm}[H]
	\renewcommand{\thealgocf}{SGR}
	\caption{Spectral gradient reweighting.}\label{alg:spectral-reweighting}
	\SetKwInput{KwInput}{Input} 
	\SetKwInput{KwOutput}{Output}
	\KwInput{\begin{itemize}
			\item per-observation gradients of moment order $k$, $\{\check{\bg}^{(k)}_n\}_{n=1}^N \subset \R^p$,
			\item hyperparameters: contamination fraction $\epsilon \in (0, 1/3)$, the MW-MMW step sizes $0<\eta_{\rho}, \eta_{w} \leq 1/2$, the inner iterations $T$, threshold constant $C>0$, target accuracy $\delta > 0$.
	\end{itemize}}
	\KwOutput{estimated MW weights $\widehat{\bw}^{(k)}$.}
	\DontPrintSemicolon
	
	Initialize the MW weights $\widehat{\bw}^{[1]} \gets (1/N, \dots, 1/N)$.\label{alg:spectral-reweighting-line-init} \; 
	
	Initialize the fixed center to be the geometric median: $\widehat{\bmu}^{[1]} \gets \widehat{\bmu}^{\text{GeomMed}}\in \argmin_{\bmu\in\R^p} \sum_{n=1}^N \normtwo{\check{\bg}^{(k)}_n - \bmu}.$
	
	\For{$s = 1, 2, \dots, s_{\max}$}{		
		${\bz_n^{[s]}} \gets \check{\bg}^{(k)}_n- \widehat{\bmu}^{[s]} \in \R^{p}$.\;
		
		Restart: either uniform restart $\doublewidehat{\bw}^{[s, 1]} \gets (1/N, \dots, 1/N)$ or warm-start $\doublewidehat{\bw}^{[s, 1]} \gets \widehat{\bw}^{[s]}$.\label{alg:spectral-reweighting-line-restart}\;
		
		\For{$t =  1,  \dots, T$}{
			
			\textbf{Dual MMW update:}
			\begin{substeps}
				Compute the MMW gain matrix: 
				$S^{[s, t]} \gets  \sum_{n=1}^N \doublewidehat{w}_n^{[s, t]}	{\bz_n^{[s]}} {\bz_n^{[s]}}^{\top} $. \;
				\label{alg-step:dual-update-density-matrix} Update dual density matrix: $\rho^{[s, t]} \gets  \frac{\exp\left\{\eta_{\rho} \sum_{t^\prime=1}^{t} S^{[s, t^\prime]} \right\} }{\Tr\left[\exp\left\{\eta_{\rho} \sum_{t^\prime=1}^{t} S^{[s, t^\prime]} \right\} \right]}$.\;
			\end{substeps}
			
			\textbf{Primal MW update:} 	
			\begin{substeps}
				Compute the MW loss: $m_n^{[s, t]} \gets  {\bz_n^{[s]}}^{\top} \rho^{[s, t]}{\bz_n^{[s]}}$.\;
				
				Update primal weights: $\widetilde{w}_n^{[s, t]} \gets \doublewidehat{w}_n^{[s, t]} (1 -\eta_{w} m_n^{[s, t]}), \mkern9mu \doublewidehat{\bw}^{[s, t+1]} \gets \Pi_{\Delta_{N, \epsilon}}^{\RE} \widetilde{\bw}^{[s, t]} $. \;
			\end{substeps}
			
		}
		$\overline{\bw}^{[s]} \gets \frac{1}{T}\sum_{t=1}^T\doublewidehat{\bw}^{[s, t]}, \mkern9mu {\overline{S}^{[s]}} \gets  \frac{1}{T}\sum_{t=1}^T S^{[s, t]}$.\;
		
		\If{$\normop{\overline{S}^{[s]}}  \leq C_{\text{stop}, k}$}{Output $\widehat{\bw}^{(k)} \gets \overline{\bw}^{[s]} $ and terminate.}
		\Else{
			Update weights: $\widehat{\bw}^{[s+1]}  \gets \overline{\bw}^{[s]} $.\;
			Update the fixed center: $\widehat{\bmu}^{[s+1]} \gets   \overline{\check{\bg}_{\widehat{\bw}^{[s+1]}}^{(k)}} =  \sum_{n=1}^N \widehat{w}_n^{[s+1]} \check{\bg}^{(k)}_n$\;
		}	
	}
\end{algorithm}

\begin{remark}
	During each interval between reweighting steps, $\widehat{\bw}^{(k)}$ and $\widehat{o}_k^{[t]}$ are frozen. Whenever they are updated, the L-BFGS memory is reset.  In \cref{alg:robust-sgr-gmm-line-lbfgs} of \cref{alg:robust-sgr-gmm}, besides using $I_{\interval}$ the fixed hyperparameter reweighting interval to decide when to update the sample weight vector and the robust order-specific DGMM weights, we can optionally use the Dennis-Schnabel's scaled-gradient test \cite[Appendix A]{dennisschnabel1996numerical} (gradient times variable scale, normalized by function scale) to get a condition to check that the L-BFGS is locally stabilized, namely:
	\begin{align*}
		\zeta_{\grad} &\leq 10 (\tol)^{1/3}, \text{ and } \zeta_{\param} \leq 10 (\tol)^{1/3}, \mkern9mu \tol \approx 10^{-6}, \\
		\zeta_{\grad} \coloneq &\frac{1}{\max\{1, |Q_N(\btheta^{[i]})|\}}\\
		&  \max\Big\{\norminfty{\nabla_{\bpi}Q_N(\btheta^{[i]})}, \max_{1\leq j\leq K} \left( \max\left\{ 1, \normtwo{\bmu_j}\right\} \normtwo{\nabla_{\bmu_j^{[i]}} Q_N(\btheta^{[i]})}\right), \\
		& \quad \max_{1\leq j\leq K} \left( \max\left\{ 1, \norm{V_j^{[i]}}_{\mathrm{F}}\right\} \norm{\nabla_{V_j} Q_N^{[i]}(\btheta)}_{\mathrm{F}}  \right) \Big\}, \\
		\zeta_{\param} &\coloneq \max\left\{\normone{\bpi^{[i]} - \bpi^{[i-1]} }, \mkern9mu \max_{1\leq j\leq K} \frac{\normtwo{\bmu_j^{[i]} - \bmu_j^{[i-1]} }}{\max\left\{ 1, \normtwo{\bmu_j^{[i]}}\right\}}, \mkern9mu  \max_{1\leq j\leq K} \frac{\norm{ \Sigma_j^{[i]} - \Sigma_j^{[i-1]}}_{\mathrm{F}}}{\max\left\{ 1, \norm{ \Sigma_j^{[i]}}_{\mathrm{F}} \right\}}  \right\}
	\end{align*}
\end{remark}
\begin{remark}
	For generality, we use uniform initialization in \cref{alg:spectral-reweighting-line-init} and uniform restart in \cref{alg:spectral-reweighting-line-restart} in our analysis of \cref{alg:spectral-reweighting}. However, we note that warm start is a useful practical heuristic that can often improve numerical optimization performance. 
\end{remark}

\subsection{Fixed-center regret bound}
\label{sec:mw-mmw-rounds}

\textEnd[both, category=mw-mmw-rounds]{In what follows, we will prove that given a fixed center $\widehat{\bmu}^{[s]}$, the averaged weight vector returned from the MW-MMW rounds, $\widehat{\bw}^{[s]} = \overline{\bw}^{[s]} = \frac{1}{T} \sum_{t = 1}^{T} \doublewidehat{\bw}^{[s, t]}$, approximately minimizes the spectral norm objective for this fixed center.} For a fixed center $\widehat{\bmu} \in \R^p$, define:
\begin{definition}[MW loss as the contamination score]
	\label{def:contamination-score}
	\begin{align}
		m\left(\rho; \check{\bg}_n^{(k)}, \widehat{\bmu}\right)  \coloneq \left(\check{\bg}_n^{(k)} - \widehat{\bmu}\right)^{\top}\rho \left(\check{\bg}_n^{(k)} - \widehat{\bmu}\right).
	\end{align}
\end{definition}

\begin{definition}[MMW gain matrix as the fixed-center covariance]
	\label{def:adv-gradient-cov-weighted-fixed}
	\begin{align}
		S\left(\bw; \{\check{\bg}_n^{(k)}\}_{n\in [N]}, \widehat{\bmu}\right) \coloneq   \sum_{n=1}^N
		w_n	\left(\check{\bg}_n^{(k)} - \widehat{\bmu}\right) \left(\check{\bg}_n^{(k)} - \widehat{\bmu}\right)^{\top}.
	\end{align}
\end{definition}

\begin{definition}[Spectral norm potential]
	\label{def:spectral-norm-potential}
	\begin{align}
		\gamma\left(\bw; \{\check{\bg}_n^{(k)}\}_{n\in [N]}, \widehat{\bmu}\right) \coloneq \normop{S\left(\bw; \{\check{\bg}_n^{(k)}\}_{n\in [N]}, \widehat{\bmu}\right)}.
	\end{align}
\end{definition}

\begin{definition}[Minimax optimizer]
	\begin{align}
		\OPT(\widehat{\bmu}) = \min_{\bw \in \Delta_{N,\epsilon}} \max_{\rho \in \frakD_p}
		\inner{S(\bw; \{\check{\bg}_{n}^{(k)}\}_{n\in [N]}, \widehat{\bmu}) }{\rho}. 
	\end{align}
\end{definition}

\begin{theoremEnd}[end, category=mw-mmw-rounds]{lemma}[Spectral norm objective as a convex-concave game]
	For all fixed center $\widehat{\bmu}$ and for all $\bw \in \Delta_{N, \epsilon}$, 
	\begin{align}
		\label{lemma:spectral-norm-convex-concave-game-connection}
		\gamma\left(\bw; \{\check{\bg}_n^{(k)}\}_{n\in [N]}, \widehat{\bmu}\right) & = \normop{S\left(\bw; \{\check{\bg}_n^{(k)}\}_{n\in [N]}, \widehat{\bmu}\right)} = \max_{\rho \in \frakD_p}\inner{S\left(\bw; \{\check{\bg}_n^{(k)}\}_{n\in [N]}, \widehat{\bmu}\right)}{\rho}
	\end{align}
	and 
	\begin{align}
		\label{lemma:primal-dual-connection}
		\inner{S\left(\bw; \{\check{\bg}_n^{(k)}\}_{n\in [N]}, \widehat{\bmu}\right)}{\rho}	&= \sum_{n=1}^N w_n m\left(\rho; \check{\bg}_n^{(k)}, \widehat{\bmu}\right).
	\end{align}
	
	As a result, 
	\begin{align}
		\min_{\bw \in \Delta_{N,\epsilon}}	\gamma\left(\bw; \{\check{\bg}_n^{(k)}\}_{n\in [N]}, \widehat{\bmu}\right) & = \min_{\bw \in \Delta_{N,\epsilon}} \normop{S\left(\bw; \{\check{\bg}_n^{(k)}\}_{n\in [N]}, \widehat{\bmu}\right)} \\
		&=\min_{\bw \in \Delta_{N,\epsilon}} \max_{\rho \in \frakD_p}\inner{S\left(\bw; \{\check{\bg}_n^{(k)}\}_{n\in [N]}, \widehat{\bmu}\right)}{\rho}\\
		&= \min_{\bw \in \Delta_{N,\epsilon}} \max_{\rho \in \frakD_p}\sum_{n=1}^N w_n m\left(\rho; \check{\bg}_n^{(k)}, \widehat{\bmu}\right).
	\end{align}
\end{theoremEnd}

\begin{theoremEnd}[end, category=mw-mmw-rounds]{lemma}[Normalizing scale]
	\label{lemma:normalizing-scale}
	Define the normalizing scale 
	
	\begin{align}
		\nu \coloneq \diam\left( \{\check{\bg}^{(k)}_n\}_{n\in[N]} \right)^2 = \max_{i,j \in [N]} \normtwo{\check{\bg}^{(k)}_i - \check{\bg}^{(k)}_j}^2.
	\end{align}
	
	Then the following holds:
	\begin{enumerate}
		\item For all $\rho\in \frakD_p$, $m\left(\rho; \{\check{\bg}_n^{(k)}\}_{n\in [N]}, \widehat{\bmu}^{[s]}\right)  \in [0,\nu]$. In particular, each MW loss $m_n^{[s, t]}$ satisfies
		\begin{align}
			m_n^{[s, t]} = m\left(\rho^{[s,t]}; \{\check{\bg}_n^{(k)}\}_{n\in [N]}, \widehat{\bmu}^{[s]}\right)   \in [0,\nu].
		\end{align}
		
		\item For all $\bw\in\Delta_{N,\epsilon}$, $	0\preceq S\left(\bw; \{\check{\bg}_n^{(k)}\}_{n\in [N]}, \widehat{\bmu}^{[s]}\right) \preceq \nu I_p$. In particular,  each MMW gain matrix $M^{[s,t]}$ satisfies
		\begin{align}
			0\preceq S^{[s,t]} = S(\doublewidehat{\bw}^{[s,t]}; \{\check{\bg}_n^{(k)}\}_{n\in [N]}, \widehat{\bmu}^{[s]}) \preceq \nu I_p.
		\end{align}
	\end{enumerate}
\end{theoremEnd}

\begin{proofEnd}
	
	Since  the initialization uses geometric median, which is in the convex hull of $\left( \{\check{\bg}_n\} \right)$ and the weighted mean fixed-center updates preserve the convex hull, we have $\widehat{\bmu}^{[s]} \in \conv\left( \{\check{\bg}_n\} \right)$ for all $s$, where $\conv$ denotes the convex hull. Thus, by choosing $\nu \coloneq \diam\left( \{\check{\bg}_n\} \right)^2 = \max_{i,j} \normtwo{\check{\bg}_i - \check{\bg}_j}^2$, we get $\normtwo{ \left(\check{\bg}_n- \widehat{\bmu}^{[s]}\right)}^2 \leq \normtwo{\check{\bg}_n- \widehat{\bmu}^{[s]}}^2 \leq \nu$.
	
	\begin{enumerate}
		\item For all density matrix $\rho\in \frakD_p$, it is PSD with $\Tr[\rho^{[s, t]}] = 1$ and all its eigenvalues are $\leq 1$. Therefore, 
		\begin{align*}
			0 \leq m\left(\rho; \{\check{\bg}_n^{(k)}\}_{n\in [N]}, \widehat{\bmu}^{[s]}\right) =  \left(\check{\bg}_n- \widehat{\bmu}^{[s]} \right)^{\top} \rho  \left(\check{\bg}_n- \widehat{\bmu}^{[s]} \right) \leq  \normtwo{ \left(\check{\bg}_n- \widehat{\bmu}^{[s]}\right)}^2 \leq \nu. 
		\end{align*}
		
		\item For each rank-one matrix $\left(\check{\bg}_n- \widehat{\bmu}^{[s]} \right)
		\left(\check{\bg}_n- \widehat{\bmu}^{[s]} \right)^{\top}$, we have 
		\begin{align}
			\normtwo{\left(\check{\bg}_n- \widehat{\bmu}^{[s]}\right)
				\left(\check{\bg}_n- \widehat{\bmu}^{[s]} \right)^{\top}} = \normtwo{\check{\bg}_n- \widehat{\bmu}^{[s]}}^2 \leq \nu, 
		\end{align}
		which then implies that for all $\bw\in\Delta_{N,\epsilon}$, 
		\begin{align}
			& 0 \preceq S(\bw; \{\check{\bg}_n^{(k)}\}_{n\in [N]}, \widehat{\bmu}^{[s]}) = \sum_{n\in [N]}
			w_n	\left(\check{\bg}_n- \widehat{\bmu}^{[s]}\right)
			\left(\check{\bg}_n- \widehat{\bmu}^{[s]} \right)^{\top} \preceq \nu I_p.
		\end{align}
	\end{enumerate}
\end{proofEnd}

\begin{theoremEnd}[end, category=mw-mmw-rounds]{theorem}[Primal MW regret bound]
	\label{thm:regret-bound-primal}
	Suppose that $0<\eta_{w} \nu \leq 1/2$. Then we get the following regret bound: after $T$ MW rounds, for all weight vector $\bw \in \Delta_{N,\epsilon}$,
	\begin{align}
		\sum_{t=1}^{T} \inner{\bm{m}^{[s, t]}}{\doublewidehat{\bw}^{[s, t]}} \leq (1+ \eta_{w} \nu )\sum_{t=1}^{T} \inner{\bm{m}^{[s, t]}}{\bw} + \frac{\RE(\bw || \widetilde{\bw}^{[s, 1]})}{\eta_w}. 
	\end{align}
\end{theoremEnd}
\begin{proofEnd}
	By \cref{lemma:normalizing-scale}, every coordinate of the loss vector satisfies
	\(0\le m_n^{[s,t]}\le \nu\).  Set
	\(\ell_n^{[s,t]}=m_n^{[s,t]}/\nu\in[0,1]\).  After
	rescaling by \(\nu\), we can apply the the classical MMW regret bound \cite[Theorem 5.1]{arora2012multiplicative} in the 	minimization (loss) form to get
	\[
	\sum_{t=1}^{T}\langle \bm m^{[s,t]},\doublewidehat\bw^{[s,t]}\rangle
	\le (1+\eta_w\nu)\sum_{t=1}^{T}\langle \bm m^{[s,t]},\bw\rangle
	+\frac{\RE(\bw\Vert \widetilde\bw^{[s,1]})}{\eta_w}.
	\]
\end{proofEnd}

\begin{remark}
	Note that in \cref{alg:spectral-reweighting}, the projection onto the capped simplex at each round \(\Delta_{N,\epsilon}\) \[
	\doublewidehat{\bw}^{[t+1]} \gets \argmin_{\bw \in \Delta_{N,\epsilon}} \RE (\bw ||\widetilde{\bw}^{[s, t]})\]
	is the standard mirror descent with Bregman projection onto a convex feasible set. Therefore, the classical regret bound still holds for all \(w\in\Delta_{N,\epsilon}\) even with the projection step, see e.g., \cite[Theorem 4.2]{bubeck_convex_2015}, \cite[Theorem 2.1]{arora2012multiplicative}.
\end{remark}

\begin{theoremEnd}[end, category=mw-mmw-rounds]{theorem}[Dual MMW regret bound]
	\label{thm:regret-bound-dual}
	Suppose that $0<\eta_{\rho} \nu \leq 1/2$. Let $S^{[s, t]} \coloneq S(\doublewidehat{\bw}^{[s, t]}; \{\check{\bg}_n^{(k)}\}_{n\in [N]}, \widehat{\bmu}^{[s]})$ be the MMW gain matrix at the $t$-round and $\rho^{[s, t]}$ be as defined in \cref{alg:spectral-reweighting}. Then, we get the following regret bound: after $T$ MMW rounds, for all density matrix $\rho \in \frakD_p$,
	\begin{align}
		\label{eq:regret-bound-dual-simplified}
		\sum_{t=1}^{T} \inner{S^{[s, t]}}{\rho} \leq 
		(1+  \eta_{\rho} \nu)\sum_{t=1}^{T} \inner{S^{[s, t]} } {\rho^{[s, t]}}  + \frac{\log p}{\eta_{\rho}}.
	\end{align}
\end{theoremEnd}

\begin{proofEnd}
	
	By \cref{lemma:normalizing-scale}, each gain matrix satisfies
	\(0\preceq S^{[s,t]}\preceq \nu I_p\).  The density-matrix update in
	\cref{alg:spectral-reweighting} is the Gibbs state associated with the cumulative
	gain matrix, as recorded in \cref{lemma:Gibbs-state}.  Hence the matrix
	multiplicative-weights regret inequality applies on the spectraplex
	\(\mathfrak D_p\) \cite[Theorem~5.1]{arora2012multiplicative}. By applying the classical MMW regret bound \cite[Theorem 5.1]{arora2012multiplicative} in the maximization (gain) form, we get the following regret bound: after $T$ rounds of MMW rounds, for all density matrix $\rho \in \frakD_p$,
	\begin{align}
		\label{eq:regret-bound-dual}
		\sum_{t=1}^{T} \inner{S^{[s, t]}} {\rho^{[s, t]}}  \geq \sum_{t=1}^{T} \inner{S^{[s, t]} }{\rho} - \eta_{\rho} \sum_{t=1}^{T} \inner{\left(S^{[s, t]}\right)^2}{\rho^{[s, t]}}  - \frac{\log p}{\eta_{\rho}}.
	\end{align}
	
	Since $0 \preceq S^{[s, t]}  \preceq \nu I_p$ and so all eigenvalues of $S^{[s, t]} \in [0,\nu]$, we have $\left(S^{[s, t]}\right)^2 \preceq \nu S^{[s, t]}$. Combined with the fact that $\rho^{[s, t]}\succeq 0$, we get
	\begin{align}
		\inner{\left(S^{[s, t]}\right)^2}{\rho^{[s, t]}}  \leq \nu \inner{S^{[s, t]}}{\rho^{[s, t]}}.
	\end{align}
	Substituting this into \cref{eq:regret-bound-dual}, we get 
	\begin{align*}
		\sum_{t=1}^{T} \inner{S^{[s, t]}} {\rho^{[s, t]}}  \geq \sum_{t=1}^{T} \inner{S^{[s, t]} }{\rho} - \eta_{\rho} \nu \sum_{t=1}^{T} \inner{S^{[s, t]}}{\rho^{[s, t]}}  - \frac{\log p}{\eta_{\rho}}.
	\end{align*}
	Rearranging the terms, we get
	\begin{align*}
		\sum_{t=1}^{T} \inner{S^{[s, t]}}{\rho} \leq 
		(1+  \eta_{\rho} \nu)\sum_{t=1}^{T} \inner{S^{[s, t]} } {\rho^{[s, t]}}  + \frac{\log p}{\eta_{\rho}}.
	\end{align*}
\end{proofEnd}
\begin{remark}
	The above proposition \cref{thm:regret-bound-dual} justifies the density matrix update \cref{alg:spectral-reweighting} (\cref{alg-step:dual-update-density-matrix}): it produces a sequence $\rho^{[s, t]}$ whose cumulative gain is competitive with the best fixed $\rho$, up to $\log R$ regret. 
\end{remark}

\begin{theoremEnd}[end, category=mw-mmw-rounds]{theorem}[Overall regret bound]
	\label{thm:overall-regret-bound}
	Fix $0<\eta_{\rho}\nu, \eta_{w}\nu \leq 1/2$. After $T$ MW-MMW rounds, for all $\bw\in\Delta_{N,\epsilon}$ and any density matrix $\rho \in \frakD_p$,
	\begin{align}
		\sum_{t=1}^{T} \inner{S^{[s, t]}}{\rho} \leq 
		(1+  \eta_{\rho}\nu) (1+ \eta_{w}\nu) \sum_{t=1}^{T} \inner{\bm{m}^{[s, t]}}{\bw} +  \frac{(1+  \eta_{\rho}\nu)\RE(\bw || \widetilde{\bw}^{[s, 1]})}{\eta_w}  +  \frac{\log p}{\eta_{\rho}}.
	\end{align}
	In particular, for uniform initialization $\widetilde{\bw}^{[s, 1]} = (1/N, \dots, 1/N)$, we have 
	\begin{align}
		\sum_{t=1}^{T} \inner{S^{[s, t]}}{\rho} \leq (1+  \eta_{\rho}\nu)  (1+ \eta_{w}\nu)\sum_{t=1}^{T} \inner{\bm{m}^{[s, t]}}{\bw} +  \frac{(1+  \eta_{\rho}\nu)\log\left(\frac{1}{1-\epsilon}\right)}{\eta_w} + \frac{\log p}{\eta_{\rho}}.
	\end{align}
\end{theoremEnd}
\begin{proofEnd}
	By \cref{lemma:primal-dual-connection}, the primal MW expected loss is equal to the dual MMW payoff for each $t = 1, \dots, T$: 
	\begin{align}
		\inner{\bm{m}^{[s, t]}}{\doublewidehat{\bw}^{[s, t]}}  =  \inner{S^{[s, t]} } {\rho^{[s, t]}} = \sum_{n\in [N]}  \doublewidehat{w}_n^{[s, t]} \bz_n^{\top} \rho \bz_n.
	\end{align}
	Substituting this into the simplified dual MMW regret bound \cref{eq:regret-bound-dual-simplified} and applying the primal regret bound \cref{thm:regret-bound-primal}, we get for all $\bw\in\Delta_{N,\epsilon}$ and any density matrix $\rho \in \frakD_p$, 
	\begin{align*}
		\sum_{t=1}^{T} \inner{S^{[s, t]}}{\rho} &\leq 
		(1+  \eta_{\rho}\nu)\sum_{t=1}^{T} \inner{S^{[s, t]} } {\rho^{[s, t]}}  + \frac{\log p}{\eta_{\rho}}\\
		&= (1+  \eta_{\rho}\nu)\sum_{t=1}^{T} 	\inner{\bm{m}^{[s, t]}}{\doublewidehat{\bw}^{[s, t]}} + \frac{\log p}{\eta_{\rho}}\\
		&\leq  (1+  \eta_{\rho}\nu)  (1+ \eta_{w}\nu)\sum_{t=1}^{T} \inner{\bm{m}^{[s, t]}}{\bw} + (1+  \eta_{\rho}\nu) \frac{\RE(\bw || \widetilde{\bw}^{[s, 1]})}{\eta_w} + \frac{\log p}{\eta_{\rho}}.\label{step:overall-regret-bound-with-proj}\numberthis
	\end{align*}
	
	For the initialization $\widetilde{\bw}^{[s, 1]} = (1/N, \dots, 1/N)$ as per \cref{alg:spectral-reweighting} and any $\bw\in \Delta_{N, \epsilon}$, we have
	\begin{align*}
		\RE(\bw|| \widetilde{w}^{[s, 1]}) &= \sum_{n=1}^N w_n \log( |\cI^{[s]}| w_n)\\
		& \leq \sum_{n=1}^N w_n \log\left(\frac{1}{1-\epsilon}\right)= \log\left(\frac{1}{1-\epsilon}\right).\numberthis
	\end{align*}
	
	Substituting this back into \cref{step:overall-regret-bound-with-proj}, we get 
	\begin{align*}
		\sum_{t=1}^{T} \inner{S^{[s, t]}}{\rho} \leq (1+  \eta_{\rho}\nu)  (1+ \eta_{w}\nu)\sum_{t=1}^{T} \inner{\bm{m}^{[s, t]}}{\bw} +  \frac{(1+  \eta_{\rho}\nu)\log\left(\frac{1}{1-\epsilon}\right)}{\eta_w} + \frac{\log p}{\eta_{\rho}}.
	\end{align*}
\end{proofEnd}

\begin{theoremEnd}[end, category=mw-mmw-rounds]{theorem}[Spectral norm bound]
	\label{thm:spectral-norm-after-mw-mmw}
	Let the output returned by the $T$ MW-MMW rounds in \cref{alg:spectral-reweighting} and and the corresponding average cost matrix be
	\begin{align}
		{\overline{\bw}^{[s]}} \coloneq \frac{1}{T} \sum_{t=1}^{T} \doublewidehat{\bw}^{[s, t]}, \mkern9mu 	{\overline{S}^{[s]}} \coloneq \frac{1}{T} \sum_{t=1}^{T} S^{[s, t]}.
	\end{align}
	Then 
	\begin{align}
		{\overline{S}^{[s]}} = S\left({\overline{\bw}^{[s]}}; \{\check{\bg}_n^{(k)}\}_{n\in [N]}, \widehat{\bmu}^{[s]}\right),
	\end{align} 
	and
	\begin{align}
		\normop{{\overline{S}^{[s]}}} \leq 	(1+  \eta_{\rho}\nu) (1+ \eta_{w}\nu) \OPT(\widehat{\bmu}^{[s]}) + \frac{(1+  \eta_{\rho}\nu)\log\left(\frac{1}{1-\epsilon}\right)}{T \eta_w}   +  \frac{\log p}{T\eta_{\rho}},
	\end{align}
	where $	\OPT(\widehat{\bmu}^{[s]})$ is the minimax optimum for the fixed center $\widehat{\bmu}^{[s]}$:
	\begin{align}
		\OPT(\widehat{\bmu}^{[s]}) = \min_{\bw \in \Delta_{N,\epsilon}} \max_{\rho\in \frakD_p}
		\inner{S(\bw; \{\check{\bg}_n^{(k)}\}_{n\in [N]}, \widehat{\bmu}^{[s]}) }{\rho}.
	\end{align}
	
	Consequently, choosing the step sizes and the number of MW-MMW rounds to be 
	\begin{align}
		\eta_{w} =  \frac{1}{\nu} \sqrt{\frac{\log\left(\frac{1}{1-\epsilon}\right)}{ T}}, \mkern9mu \eta_{\rho} = \frac{1}{\nu} \sqrt{\frac{\log p}{T}}, \mkern9mu T \geq 4 \max \left\{ \log\left(\frac{1}{1-\epsilon}\right), \log p\right\},
	\end{align} 
	the output ${\widehat{\bw}^{[s]}} =  {\overline{\bw}^{[s]}}$ is an $ \cO\left( \nu\sqrt{\frac{\log\left(\frac{1}{1-\epsilon}\right)}{T}} + \nu\sqrt{\frac{\log p}{T}}\right)$-approximation to $	\OPT(\widehat{\bmu}^{[s]})$, specifically,
	\begin{align}
		\gamma\left(\widehat{\bw}^{[s]}; \{\check{\bg}_n^{(k)}\}_{n\in [N]}, \widehat{\bmu}^{[s]}\right)  - \OPT(\widehat{\bmu}^{[s]}) \leq 4 \nu \left( \sqrt{\frac{\log\left(\frac{1}{1-\epsilon}\right)}{T}} + \sqrt{\frac{\log p}{T}}\right) \eqcolon \delta_{T}(\widehat{\bmu}^{[s]}).
	\end{align}
\end{theoremEnd}

\begin{proofEnd}
	For ${\overline{\bw}^{[s]}} \coloneq \frac{1}{T} \sum_{t=1}^{T} \doublewidehat{\bw}^{[s, t]}$, we have ${\overline{\bw}^{[s]}}
	\in \Delta_{\cI^{[s]},\epsilon}$ since convexity preserves feasibility and 
	\begin{align}
		{\overline{S}^{[s]}} &\coloneq \frac{1}{T} \sum_{t=1}^{T} S^{[s, t]} = \frac{1}{T} \sum_{t=1}^{T} S\left(\doublewidehat{\bw}^{[s, t]}; \{\check{\bg}_n^{(k)}\}_{n\in [N]}, \widehat{\bmu}^{[s]}\right) \\
		&= \frac{1}{T} \sum_{t=1}^{T} \left( \sum_{n=1}^N \doublewidehat{w}^{[s, t]}_n {\bz_n^{[s]}} {\bz_n^{[s]}}^{\top} \right)\\
		&= \sum_{n=1}^N \left(\frac{1}{T} \sum_{t=1}^{T} \doublewidehat{w}^{[s, t]}_n \right) {\bz_n^{[s]}} {\bz_n^{[s]}}^{\top}\\
		&=  \sum_{n=1}^N \overline{w}_n {\bz_n^{[s]}} {\bz_n^{[s]}}^{\top} = S\left({\overline{\bw}^{[s]}}; \{\check{\bg}_n^{(k)}\}_{n\in [N]}, \widehat{\bmu}^{[s]}\right)
	\end{align}
	
	By applying \cref{thm:overall-regret-bound}, we get
	\begin{align}
		\normop{{\overline{S}^{[s]}}} &= \max_{\rho \in \frakD_p} \inner{{\overline{S}^{[s]}}}{\rho}=  \max_{\rho \in \frakD_p} \inner{\frac{1}{T} \sum_{t=1}^{T} S^{[s, t]} }{\rho}= \max_{\rho \in \frakD_p} \frac{1}{T}  \sum_{t=1}^{T} \inner{S^{[s, t]}}{\rho}\\
		&\leq 	\frac{(1+  \eta_{\rho}\nu) (1+ \eta_{w}\nu)}{T}  \sum_{t=1}^{T} \inner{\bm{m}^{[s, t]}}{\bw} + \frac{(1+  \eta_{\rho}\nu)\log\left(\frac{1}{1-\epsilon}\right)}{T \eta_w}  +  \frac{\log p}{T\eta_{\rho}}.
	\end{align}
	
	Choose $\bw = \bw^{*}$ where 
	\begin{align}
		\bw^{*} \in \argmin_{\bw \in \Delta_{N,\epsilon}} \normop{S\left({\bw}; \{\check{\bg}_n^{(k)}\}_{n\in [N]}, \widehat{\bmu}^{[s]}\right)}.
	\end{align}
	Then for each $t = 1, \dots, T$, we have 
	\begin{align}
		\frac{1}{T}\sum_{t=1}^{T}	\inner{\bm{m}^{[s, t]}}{\bw^\star} &= \frac{1}{T}\sum_{t=1}^{T}\sum_{n=1}^N w_n^\star  {\bz_n^{[s]}}^{\top} \rho^{[s, t]} {\bz_n^{[s]}} \\
		&=\frac{1}{T}\sum_{t=1}^{T} \inner{ \sum_{n=1}^N w_n^\star {\bz_n^{[s]}} {\bz_n^{[s]}}^{\top}}{\rho^{[s, t]}}\\
		&= \frac{1}{T}\sum_{t=1}^{T}\inner{S\left({\bw^\star}; \{\check{\bg}_n^{(k)}\}_{n\in [N]}, \widehat{\bmu}^{[s]}\right)}{ \rho^{[s, t]}}\\
		& \leq \frac{1}{T}\sum_{t=1}^{T} \normop{S\left({\bw^\star}; \{\check{\bg}_n^{(k)}\}_{n\in [N]}, \widehat{\bmu}^{[s]}\right)} = \OPT(\widehat{\bmu}^{[s]}).
	\end{align}
	
	Since $\gamma\left(\widehat{\bw}^{[s]}; \{\check{\bg}_n^{(k)}\}_{n\in [N]}, \widehat{\bmu}^{[s]}\right) = \normop{S\left(\widehat{\bw}^{[s]};  \{\check{\bg}_n^{(k)}\}_{n\in [N]}, \widehat{\bmu}^{[s]}\right)}  = \normop{S\left(\overline{\bw}^{[s]};  \{\check{\bg}_n^{(k)}\}_{n\in [N]}, \widehat{\bmu}^{[s]}\right)} =	\normop{{\overline{S}^{[s]}}}$, we get
	\begin{align}
		\gamma\left(\widehat{\bw}^{[s]}; \{\check{\bg}_n^{(k)}\}_{n\in\cI}, \widehat{\bmu}^{[s]}\right) =
		\normop{{\overline{S}^{[s]}}} \leq 	(1+  \eta_{\rho}\nu) (1+ \eta_{w}\nu) \OPT(\widehat{\bmu}^{[s]}) + \frac{(1+  \eta_{\rho}\nu)\log\left(\frac{1}{1-\epsilon}\right)}{T \eta_w}  +  \frac{\log p}{T\eta_{\rho}}.
	\end{align}
	To minimize the RHS of the last step, we choose the step sizes to be 
	\begin{align}
		\eta_{w} =  \frac{1}{\nu} \sqrt{\frac{\log\left(\frac{1}{1-\epsilon}\right)}{ T}}, \mkern9mu \eta_{\rho} = \frac{1}{\nu} \sqrt{\frac{\log p}{T}}.
	\end{align}
	Note that as long as $T \geq 4 \max \left\{ \log\left(\frac{1}{1-\epsilon}\right), \log p\right\}$,  we can guarantee that $0 < \eta_{w}\nu, \eta_{\rho}\nu\leq \frac{1}{2}$.
	Additionally, since $ \normop{S^{[s, t]}}\leq \nu$ for all $t$ by \cref{lemma:normalizing-scale}, $\OPT(\widehat{\bmu}^{[s]}) \leq \nu$. After substituting the step sizes and rearranging, we get 
	\begin{align}
		\gamma\left(\widehat{\bw}^{[s]}; \{\check{\bg}_n^{(k)}\}_{n\in\cI}, \widehat{\bmu}^{[s]}\right) - \OPT(\widehat{\bmu}^{[s]}) &\leq  \nu\left(\eta_w \nu + \eta_\rho \nu + \eta_w \eta_\rho \nu^2\right)
		+ \frac{(1+  \eta_{\rho}\nu)\log\left(\frac{1}{1-\epsilon}\right)}{T \eta_w} 
		+\frac{\log p}{T\eta_\rho} \\
		& \leq 4 \nu \left( \sqrt{\frac{\log\left(\frac{1}{1-\epsilon}\right)}{T}} + \sqrt{\frac{\log p}{T}}\right) \eqcolon \delta_{T}(\widehat{\bmu}^{[s]}).
	\end{align}
\end{proofEnd}

\begin{remark}
	\cref{thm:spectral-norm-after-mw-mmw} shows that the MW-MMW rounds approximately solve the convex-concave min-max game for a fixed center	$\widehat{\bmu}^{[s]}$ and thereby approximately computes a reweighting with minimal projected second-moment operator norm about the fixed center $\widehat{\bmu}^{[s]}$. It is worth noting that this bound matches the mirror descent rate in \cite[Theorem 4.2]{bubeck_convex_2015}:
	\begin{theorem}[{\cite[Theorem 4.2]{bubeck_convex_2015}}]
		Given a mirror map $\Phi$ that is $\rho$-strongly convex on $\cX \cap \cD$ w.r.t.~$\norm{\cdot}$, radius $R^2= \sum_{x\in \cX\cap\cD} \Phi(x) - \Phi(x_1)$  $\eta =\frac{R}{L} \sqrt{\frac{2\rho}{t}}$, and $f$ that is convex and $L$-Lipschitz w.r.t.~$\norm{\cdot}$, the mirror descent with step size $\eta =\frac{R}{L} \sqrt{\frac{2\rho}{T}}$ satisfies
		\begin{align}
			f\left(\frac{1}{T} \sum_{t=1}^T x_t\right) - f(x^\star) \leq RL\sqrt{\frac{2}{\rho T}}.
		\end{align}
	\end{theorem}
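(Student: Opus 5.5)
We follow the standard Bregman-divergence analysis of mirror descent, since the statement is the classical rate of \cite[Theorem 4.2]{bubeck_convex_2015}. Throughout, $D_\Phi(x,y)=\Phi(x)-\Phi(y)-\nabla\Phi(y)^\top(x-y)$ denotes the Bregman divergence, $x_1\in\argmin_{x\in\cX\cap\cD}\Phi(x)$, and we read the radius as $R^2=\sup_{x\in\cX\cap\cD}\Phi(x)-\Phi(x_1)$ (the ``$\sum$'' in the statement is a typo for $\sup$). We also read the step size as $\eta=\frac{R}{L}\sqrt{2\rho/T}$ with $T$, the total number of iterations.

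The iterates are defined by two steps with subgradient $g_t\in\partial f(x_t)$. The mirror step is $\nabla\Phi(y_{t+1})=\nabla\Phi(x_t)-\eta g_t$. The Bregman projection is $x_{t+1}\in\argmin_{x\in\cX\cap\cD}D_\Phi(x,y_{t+1})$.

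\textbf{Per-step inequality.} Fix $x^\star$. By convexity and the definition of the mirror step,
\begin{align*}
f(x_t)-f(x^\star)\le g_t^\top(x_t-x^\star)=\tfrac1\eta\bigl(\nabla\Phi(x_t)-\nabla\Phi(y_{t+1})\bigr)^\top(x_t-x^\star).
\end{align*}
The three-point identity rewrites the right-hand side as
\begin{align*}
\tfrac1\eta\bigl(D_\Phi(x^\star,x_t)+D_\Phi(x_t,y_{t+1})-D_\Phi(x^\star,y_{t+1})\bigr).
\end{align*}
The generalized Pythagorean inequality for Bregman projections, which follows from first-order optimality of $x_{t+1}$, gives
\begin{align*}
D_\Phi(x^\star,y_{t+1})\ge D_\Phi(x^\star,x_{t+1})+D_\Phi(x_{t+1},y_{t+1}).
\end{align*}
Substituting it, the divergence terms $D_\Phi(x^\star,x_t)-D_\Phi(x^\star,x_{t+1})$ telescope. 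What remains is a per-step residual $D_\Phi(x_t,y_{t+1})-D_\Phi(x_{t+1},y_{t+1})$.

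\textbf{Residual bound (main step).} This is the only place strong convexity is used, and it is where the care is needed. Expanding both divergences, the residual equals
\begin{align*}
\Phi(x_t)-\Phi(x_{t+1})-\nabla\Phi(y_{t+1})^\top(x_t-x_{t+1}).
\end{align*}
Adding and subtracting $\nabla\Phi(x_t)^\top(x_t-x_{t+1})$ and using $\rho$-strong convexity of $\Phi$, this is at most
\begin{align*}
\bigl(\nabla\Phi(x_t)-\nabla\Phi(y_{t+1})\bigr)^\top(x_t-x_{t+1})-\tfrac\rho2\norm{x_t-x_{t+1}}^2=\eta g_t^\top(x_t-x_{t+1})-\tfrac\rho2\norm{x_t-x_{t+1}}^2.
\end{align*}
Since $f$ is $L$-Lipschitz, the dual norm satisfies $\norm{g_t}_*\le L$, so this is at most $\eta L\norm{x_t-x_{t+1}}-\frac\rho2\norm{x_t-x_{t+1}}^2$. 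Maximizing over the distance gives the bound $\frac{\eta^2L^2}{2\rho}$.

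\textbf{Summation and conclusion.} Summing over $t=1,\dots,T$ yields
\begin{align*}
\sum_{t=1}^T\bigl(f(x_t)-f(x^\star)\bigr)\le\frac{D_\Phi(x^\star,x_1)}{\eta}+\frac{\eta L^2T}{2\rho}.
\end{align*}
First-order optimality of $x_1$ gives $\nabla\Phi(x_1)^\top(x^\star-x_1)\ge0$, hence $D_\Phi(x^\star,x_1)\le\Phi(x^\star)-\Phi(x_1)\le R^2$. Dividing by $T$ and applying Jensen's inequality gives
\begin{align*}
f\Bigl(\tfrac1T\sum_{t=1}^T x_t\Bigr)-f(x^\star)\le\frac{R^2}{\eta T}+\frac{\eta L^2}{2\rho}.
\end{align*}
Substituting $\eta=\frac RL\sqrt{2\rho/T}$ balances the two terms and gives exactly $RL\sqrt{2/(\rho T)}$.
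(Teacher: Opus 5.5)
Your proposal is correct and follows the standard proof of Bubeck's Theorem~4.2: three-point identity, the Bregman--Pythagorean inequality, the strong-convexity bound $\eta^2L^2/(2\rho)$ on the residual, telescoping, and Jensen. The paper only cites this theorem without giving a proof, your readings of the typos ($\sup$ for $\sum$ in $R^2$, $T$ for $t$ in $\eta$) are right, and the one refinement worth adding is Bubeck's device of running the argument for an arbitrary $x\in\cX\cap\cD$ and then letting $x\to x^\star$, because $x^\star$ may lie on $\partial\cD$ (e.g.\ zero weights under the entropic mirror maps used in this paper), where the Pythagorean lemma and the bound $\Phi(x^\star)-\Phi(x_1)\le R^2$ are not directly available.
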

	To interpret this result in our context,
	\begin{itemize}
		\item $L = 1$ since the MW loss $m_n^{[s, t]} \in [0,1]$ are already normalized (by \cref{lemma:normalizing-scale}),
		\item primal $\bw$-player: here the mirror maps is the relative entropy; the radius $R$ is therefore $\RE(\bw || \widetilde{\bw}^{[s, 1]})$, which is bounded by $\log\left(\frac{1}{1-\epsilon}\right)$,
		\item dual $\rho$-player: here the mirror map is the von Neumann entropy on density matrices; the radius is therefore $\log p$. 
	\end{itemize}
\end{remark}

\subsection{Convergence of the fixed-center updates}
\label{sec:fixed-center-updates}
\textEnd[both, category=fixed-center-updates]{In this section, we will show that the fixed-center updates in \cref{alg:spectral-reweighting}
	converge and that terminate within $\cO\left(\frac{\log\left( \frac{(e^{[1]} - R_{\infty})_{+}}{(R - R_{\infty})_{+}}\right)}{\log\left(\frac{1}{\alpha_{\epsilon}}\right)}\right)$.
	
	Fix the moment order $k$. The fixed center at iteration $s$ is $\widehat{\bmu}^{[s]} = \overline{\check{\bg}_{\widehat{\bw}^{[s]}}^{(k)}} = \sum_{n=1}^N \widehat{w}_n^{[s]} \check{\bg}_n^{(k)}$.  The population mean, as defined in \cref{eq:population-gradient-mean}, is $\bmu_{\bg}^{(k)} \coloneq \EX_Y\left[\bg^{(k)}(\bY)\right]$. The fixed-center update is the following:
	\begin{align}
		\mu^{[s+1]} \gets \overline{\check{\bg}_{\widehat{\bw}^{[s+1]}}^{(k)}} = \sum_{n=1}^N \widehat{w}_n^{[s+1]} \check{\bg}_n^{(k)}, \mkern9mu \widehat{\bw}^{[s+1]} \approx \OPT(\widehat{\bmu}^{[s]}).
\end{align} }

The convergence of the fixed-center updates requires the following stability conditions on the inlier per-observation gradients.
\begin{assumption}[Stability conditions on the inliers]
	\label{ass:inlier-stability} 
	Fix the moment order $k \in [L]$. Suppose that the set of per-observation gradients $\{\check{\bg}_n^{(k)} \}_{n=1}^N \in \R^p$ is distributed as $\cD_g$ with population mean $\bmu_{\bg}^{(k)}$ (\cref	{eq:population-gradient-mean}) and population covariance $\Sigma_{\bg}^{(k)}$ (\cref	{eq:population-gradient-cov}). Assume the following stability condition: there exists $\delta_{\mu,k}, \delta_{\Sigma,k} \geq 0$ such that for every inlier weight vector $\bw \in \Delta_{\cI_{\inlier}, \frac{\epsilon}{1-\epsilon}}$:
	\begin{align}
		&\normtwo{\sum_{n\in \cI_{\inlier}} w_n 	\left(\check{\bg}_n^{(k)} - \bmu_{\bg}^{(k)}\right)} \leq \delta_{\mu,k},	\label{eq:stability-condition-mean}\\
		&\sum_{n\in\cI_{\inlier}}w_n  \left(\check{\bg}_n^{(k)} - \bmu_{\bg}^{(k)}\right) \left(\check{\bg}_n^{(k)} - \bmu_{\bg}^{(k)}\right)^{\top}  \preceq  \Sigma_{\bg}^{(k)}  + \delta_{\Sigma,k} I.	\label{eq:stability-condition-cov}
	\end{align}
\end{assumption}

\begin{theoremEnd}[end, category=fixed-center-updates]{lemma}[Oracle inlier weight is feasible]
	\label{lemma:oracle-feasible}
	Define the oracle inlier weight vector 
	\begin{align}
		w_n^\sharp \coloneq \frac{\mathbf 1_{\{n\in \cI_{\inlier}\}}}{\left|\cI_{\inlier}\right|},\mkern9mu n=1,\dots,N.
	\end{align}
	Then $\bw^\sharp\in\Delta_{N,\epsilon}$.
\end{theoremEnd}

\begin{proofEnd}
	We have $\sum_n w_n^\sharp =1$ and $w_n^\sharp\ge 0$. Since $|\cI_{\inlier}|\ge (1-\epsilon)N$,
	\begin{align}
		0\leq 	w_n^\sharp \leq \frac{1}{(1-\epsilon)N},
		\mkern9mu \forall n=1,\dots,N,
	\end{align}
	which implies that $\bw^\sharp\in\Delta_{N,\epsilon}$.
\end{proofEnd}

\begin{theoremEnd}[end, category=fixed-center-updates]{lemma}[Inlier-outlier decomposition]
	\label{lemma:inlier-outlier-decomp}
	For any $\bw\in\Delta_{N,\epsilon}$, define the outlier mass and the inlier mass:
	\begin{align}
		\tau_{\outlier}(\bw) \coloneq \sum_{n\in\cI_{\outlier}} w_n, \mkern9mu \tau_{\inlier}(\bw) \coloneq \sum_{n\in\cI_{\inlier}} w_n  = 1- \tau_{\outlier}.
	\end{align}
	Then, the following hold for all $\bw \in \Delta_{N,\epsilon}$:
	\begin{subequations}
		\begin{alignat}{2}
			\text{\textbullet}\mkern9mu & \tau_{\outlier}(\bw) \leq \frac{\epsilon}{1-\epsilon}, \mkern9mu \tau_{\inlier}(\bw)  \geq \frac{1-2\epsilon}{1-\epsilon}, \label{lemma:inlier-outlier-decomp-weight}\\
			\text{\textbullet}\mkern9mu  &\overline{\check{\bg}_{\bw}^{(k)}} =  \tau_{\inlier}(\bw)  \restr{\overline{\check{\bg}_{\bw}^{(k)}}}{\cI_{\inlier}} + \tau_{\outlier}(\bw)  \restr{\overline{\check{\bg}_{\bw}^{(k)}}}{\cI_{\outlier}}\label{lemma:inlier-outlier-decomp-mean}\\ 
			\text{\textbullet}\mkern9mu  &\check{S}_{\bg,\bw}^{(k)} = \tau_{\inlier}(\bw)	\restr{\check{S}_{\bg,\bw}^{(k)}}{\cI_{\inlier}}  + \tau_{\outlier}(\bw)	\restr{\check{S}_{\bg,\bw}^{(k)}}{\cI_{\outlier}}+ \tau_{\inlier}(\bw) \tau_{\outlier}(\bw) \left( \restr{\overline{\check{\bg}_{\bw}^{(k)}}}{\cI_{\inlier}} - \restr{\overline{\check{\bg}_{\bw}^{(k)}}}{\cI_{\outlier}} \right) \left( \restr{\overline{\check{\bg}_{\bw}^{(k)}}}{\cI_{\inlier}} - \restr{\overline{\check{\bg}_{\bw}^{(k)}}}{\cI_{\outlier}} \right)^\top,\label{lemma:inlier-outlier-decomp-cov}
		\end{alignat}
	\end{subequations}
	where $	\restr{\overline{\check{\bg}_{\bw}^{(k)}}}{\cI}$ is the weighted sample mean as defined in \cref{eq:adv-gradient-mean-weighted} and $\restr{\check{S}_{\bg,\bw}^{(k)}}{\cI}$ is the weighted covariance as defined in \cref{eq:adv-gradient-cov-weighted}.
\end{theoremEnd}
\begin{proofEnd}
	Since $|\cI_{\outlier}|\leq \epsilon N$ and each per-observation gradient weight is bounded by $1/((1-\epsilon)N)$, we have
	\begin{align}
		\tau_{\outlier}(\bw) &=\sum_{n\in\cI_{\outlier}} w_n \leq \frac{|\cI_{\outlier}|}{(1-\epsilon)N} \leq \frac{\epsilon}{1-\epsilon},\\
		\tau_{\inlier}(\bw)  &= 1 - \tau_{\outlier}(\bw) \geq \frac{1-2\epsilon}{1-\epsilon}.
	\end{align}
	The mean decomposition is due to the partition of the weighted sum over $\cI_{\inlier}\sqcup\cI_{\outlier}$:
	\begin{align}
		\overline{\check{\bg}_{\bw}^{(k)}} &\coloneq \sum_{n=1}^N w_n \check{\bg}_n^{(k)} \\
		&= \sum_{n\in\cI_{\inlier}} w_n  \check{\bg}_n^{(k)} + \sum_{n\in\cI_{\outlier}} w_n \check{\bg}_n^{(k)}\\
		&= \tau_{\inlier}(\bw)\left(\frac{1}{\tau_{\inlier}(\bw)}\sum_{n\in\cI_{\inlier}} w_n \check{\bg}_n^{(k)}\right)
		+\tau_{\outlier}(\bw)\left(\frac{1}{\tau_{\outlier}(\bw)}\sum_{n\in\cI_{\outlier}} w_n \check{\bg}_n^{(k)}\right)\\
		&= \tau_{\inlier}(\bw)  \restr{\overline{\check{\bg}_{\bw}^{(k)}}}{\cI_{\inlier}} + \tau_{\outlier}(\bw)  \restr{\overline{\check{\bg}_{\bw}^{(k)}}}{\cI_{\outlier}}.
	\end{align}
	For the covariance decomposition:
	\begin{align}
		\check{\bg}_n^{(k)} - \overline{\check{\bg}_{\bw}^{(k)}}
		=
		\begin{cases}
			\left(\check{\bg}_n^{(k)} - 	\restr{\overline{\check{\bg}_{\bw}^{(k)}}}{\cI_{\inlier}} \right) 
			+ \tau_{\outlier}(\bw)\left(\restr{\overline{\check{\bg}_{\bw}^{(k)}}}{\cI_{\inlier}} - 	\restr{\overline{\check{\bg}_{\bw}^{(k)}}}{\cI_{\outlier}}  \right), & n\in\cI_{\inlier},\\
			\left(\check{\bg}_n^{(k)} -	\restr{\overline{\check{\bg}_{\bw}^{(k)}}}{\cI_{\outlier}}\right) - \tau_{\inlier}(\bw)\left(	\restr{\overline{\check{\bg}_{\bw}^{(k)}}}{\cI_{\inlier}}- 	\restr{\overline{\check{\bg}_{\bw}^{(k)}}}{\cI_{\outlier}}\right), & n\in\cI_{\outlier}.
		\end{cases}
	\end{align}
	For brevity, we denote $a_{\inlier} \coloneq \restr{\overline{\check{\bg}_{\bw}^{(k)}}}{\cI_{\inlier}}$, $a_{\outlier} \coloneq \restr{\overline{\check{\bg}_{\bw}^{(k)}}}{\cI_{\outlier}}$, $\Delta \coloneq a_{\inlier}  - a_{\outlier} $. Expanding the definition of $\check{S}_{\bg,\bw}^{(k)}$, we get
	\begin{align*}
		\check{S}_{\bg,\bw}^{(k)} &\coloneq \sum_{n=1}^N w_n  \left(\check{\bg}_n^{(k)} - \overline{\check{\bg}_{\bw}^{(k)}} \right) \left(\check{\bg}_n^{(k)} - \overline{\check{\bg}_{\bw}^{(k)}} \right)^{\top} \numberthis\\
		&= \sum_{n\in\cI_{\inlier}} w_n  \left(	\left(\check{\bg}_n^{(k)} - a_{\inlier}\right) 
		+ \tau_{\outlier}(\bw)\Delta \right) \left(\left(\check{\bg}_n^{(k)} - a_{\inlier}\right) 
		+ \tau_{\outlier}(\bw)\Delta \right)^\top \\
		&\quad +  \sum_{n\in\cI_{\outlier}}  w_n \left( 	\left(\check{\bg}_n^{(k)} -	a_{\outlier}\right) - \tau_{\inlier}(\bw)\Delta \right)  \left( 	\left(\check{\bg}_n^{(k)} -	a_{\outlier}\right) - \tau_{\inlier}(\bw)\Delta \right) ^\top \numberthis\\
		&=  \sum_{n\in\cI_{\inlier}} w_n \left(\check{\bg}_n^{(k)} - a_{\inlier}\right) \left(\check{\bg}_n^{(k)} - a_{\inlier}\right) ^\top  + \tau_{\outlier}(\bw)^2 \sum_{n\in\cI_{\inlier}} w_n \Delta  \Delta^\top \\
		& \quad + \underbrace{\tau_{\outlier}(\bw) \sum_{n\in\cI_{\inlier}} w_n  \left(\check{\bg}_n^{(k)} - a_{\inlier}\right) \Delta^\top}_{= 0} +  \underbrace{\tau_{\outlier} (\bw) \sum_{n\in\cI_{\inlier}} w_n \Delta \left(\check{\bg}_n^{(k)} - a_{\inlier}\right)^\top}_{= 0}\\
		& \quad +  \sum_{n\in\cI_{\outlier}} w_n \left(\check{\bg}_n^{(k)} - a_{\outlier}\right) \left(\check{\bg}_n^{(k)} - a_{\outlier}\right) ^\top + \tau_{\inlier}(\bw)^2 \sum_{n\in\cI_{\outlier}} w_n \Delta  \Delta^\top\\
		&\quad - \underbrace{\tau_{\inlier}(\bw) \sum_{n\in\cI_{\outlier}} w_n  \left(\check{\bg}_n^{(k)} - a_{\outlier}\right) \Delta^\top}_{= 0} - \underbrace{\tau_{\inlier}(\bw) \sum_{n\in\cI_{\outlier}} w_n \Delta \left(\check{\bg}_n^{(k)} - a_{\outlier}\right)^\top}_{= 0}\numberthis\\
		&= \tau_{\inlier}(\bw)	\restr{\check{S}_{\bg,\bw}^{(k)}}{\cI_{\inlier}}+ \tau_{\inlier}(\bw) \tau_{\outlier}(\bw)^2\Delta  \Delta^\top + \tau_{\outlier}(\bw)	\restr{\check{S}_{\bg,\bw}^{(k)}}{\cI_{\outlier}} + \tau_{\outlier}(\bw) \tau_{\inlier}(\bw)^2\Delta  \Delta^\top \numberthis\\
		&= \tau_{\inlier}(\bw)	\restr{\check{S}_{\bg,\bw}^{(k)}}{\cI_{\inlier}} +  \tau_{\outlier}(\bw)	\restr{\check{S}_{\bg,\bw}^{(k)}}{\cI_{\outlier}}  + \tau_{\inlier} (\bw) \tau_{\outlier} (\bw) \left((1- \tau_{\outlier}(\bw) )\Delta \Delta^\top + \tau_{\outlier} (\bw)\Delta \Delta^\top \right) \numberthis\\
		&= \tau_{\inlier}(\bw)	\restr{\check{S}_{\bg,\bw}^{(k)}}{\cI_{\inlier}} +  \tau_{\outlier}(\bw)	\restr{\check{S}_{\bg,\bw}^{(k)}}{\cI_{\outlier}}  + \tau_{\inlier} (\bw) \tau_{\outlier}(\bw) \Delta \Delta^\top,\numberthis
	\end{align*}
	which is the claimed identity in \cref{lemma:inlier-outlier-decomp-cov}: 
	\begin{align*}
		\check{S}_{\bg,\bw}^{(k)} = \tau_{\inlier}(\bw)	\restr{\check{S}_{\bg,\bw}^{(k)}}{\cI_{\inlier}}  + \tau_{\outlier}(\bw)	\restr{\check{S}_{\bg,\bw}^{(k)}}{\cI_{\outlier}}+ \tau_{\inlier}(\bw) \tau_{\outlier}(\bw) \left( \restr{\overline{\check{\bg}_{\bw}^{(k)}}}{\cI_{\inlier}} - \restr{\overline{\check{\bg}_{\bw}^{(k)}}}{\cI_{\outlier}} \right) \left( \restr{\overline{\check{\bg}_{\bw}^{(k)}}}{\cI_{\inlier}} - \restr{\overline{\check{\bg}_{\bw}^{(k)}}}{\cI_{\outlier}} \right)^\top.
	\end{align*}
\end{proofEnd}

\begin{theoremEnd}[end, category=fixed-center-updates]{lemma}[Centering identity]
	\label{lemma:centering-identity}
	Suppose	$\cI \subseteq [N]$ is an arbitrary index set. The following holds for all $\bw \in \Delta_{\cI,\epsilon}$ and for all $\widehat{\bmu} \in \R^{p}$:
	\begin{align}
		S \left(\bw; \{\check{\bg}_n^{(k)}\}_{n\in\cI} , \widehat{\bmu}\right) =  \restr{\check{S}_{\bg,\bw}^{(k)}}{\cI} + \left( \restr{\overline{\check{\bg}_{\bw}^{(k)}}}{\cI} - \widehat{\bmu}\right) \left( \restr{\overline{\check{\bg}_{\bw}^{(k)}}}{\cI} - \widehat{\bmu}\right)^{\top}, 
	\end{align}
	
	and therefore,
	\begin{align}
		\normop{\restr{\check{S}_{\bg,\bw}^{(k)}}{\cI}} \leq \normop{S \left(\bw; \{\check{\bg}_n^{(k)}\}_{n\in\cI} , \widehat{\bmu}\right)},
	\end{align}
	where $	\restr{\overline{\check{\bg}_{\bw}^{(k)}}}{\cI}$ is the weighted sample mean as defined in \cref{eq:adv-gradient-mean-weighted}, $\restr{\check{S}_{\bg,\bw}^{(k)}}{\cI}$ is the weighted covariance as defined in \cref{eq:adv-gradient-cov-weighted}, and $S \left(\bw; \{\check{\bg}_n^{(k)}\}_{n\in\cI} , \widehat{\bmu}\right)$ is defined in 	\cref{def:adv-gradient-cov-weighted-fixed}.
\end{theoremEnd}
\begin{proofEnd}
	Expanding $\check{\bg}_n^{(k)} - \widehat{\mu} = \left(\check{\bg}_n^{(k)} - \restr{\overline{\check{\bg}_{\bw}^{(k)}}}{\cI} \right) + \left(\restr{\overline{\check{\bg}_{\bw}^{(k)}}}{\cI}- \widehat{\bmu}\right)$, we get
	\begin{align*}
		&\left(\check{\bg}_n^{(k)} - \widehat{\mu} \right) \left(\check{\bg}_n^{(k)} - \widehat{\bmu} \right)^{\top} \\
		=& \left(\check{\bg}_n^{(k)} - \restr{\overline{\check{\bg}_{\bw}^{(k)}}}{\cI} \right) \left(\check{\bg}_n^{(k)} - \restr{\overline{\check{\bg}_{\bw}^{(k)}}}{\cI} \right)^{\top} 
		+ \left( \restr{\overline{\check{\bg}_{\bw}^{(k)}}}{\cI} - \widehat{\bmu}\right) \left( \restr{\overline{\check{\bg}_{\bw}^{(k)}}}{\cI} - \widehat{\bmu}\right)^{\top} 
		+ 2 \left( \check{\bg}_n^{(k)} - \restr{\overline{\check{\bg}_{\bw}^{(k)}}}{\cI}\right) \left( \restr{\overline{\check{\bg}_{\bw}^{(k)}}}{\cI} - \widehat{\bmu}\right)^{\top}.\numberthis
	\end{align*}
	
	Multiplying both sides by $w_n$ and summing over $n$, the mixed terms vanish since $\sum_{n\in\cI} w_n  \left( \check{\bg}_n^{(k)} - \restr{\overline{\check{\bg}_{\bw}^{(k)}}}{\cI}\right) = 0$, and we get
	\begin{align}
		S \left(\bw; \{\check{\bg}_n^{(k)}\}, \widehat{\bmu}\right) &\coloneq  \sum_{n\in\cI}   w_n \left(\check{\bg}_n^{(k)} - \widehat{\bmu} \right) \left(\check{\bg}_n^{(k)} - \widehat{\bmu} \right)^{\top}\\
		&= \sum_{n\in\cI}  w_n  \left(\check{\bg}_n^{(k)} - \restr{\overline{\check{\bg}_{\bw}^{(k)}}}{\cI} \right) \left(\check{\bg}_n^{(k)} - \restr{\overline{\check{\bg}_{\bw}^{(k)}}}{\cI} \right)^{\top} 
		+ \left( \restr{\overline{\check{\bg}_{\bw}^{(k)}}}{\cI} - \widehat{\bmu}\right) \left( \restr{\overline{\check{\bg}_{\bw}^{(k)}}}{\cI} - \widehat{\bmu}\right)^{\top}\\
		&= \restr{\check{S}_{\bg,\bw}^{(k)}}{\cI} + \left(\restr{\overline{\check{\bg}_{\bw}^{(k)}}}{\cI} - \widehat{\bmu}\right) \left( \restr{\overline{\check{\bg}_{\bw}^{(k)}}}{\cI} - \widehat{\bmu}\right)^{\top},
	\end{align}
	which then implies that 
	\begin{align*}
		\normop{\restr{\check{S}_{\bg,\bw}^{(k)}}{\cI} } \leq \normop{S \left(\bw; \{\check{\bg}_n^{(k)}\}_{n\in\cI}, \widehat{\bmu}\right)}.
	\end{align*}
\end{proofEnd}

\begin{theoremEnd}[end, category=fixed-center-updates]{lemma}[Contraction factor]
	\label{lemma:contraction-factor}
	Under \cref{ass:inlier-stability}, for any arbitrary fixed center $\widehat{\bmu} \in \R^{p}$, the following holds for all $\bw \in \Delta_{N,\epsilon}$:
	\begin{align}
		\normtwo{\overline{\check{\bg}_{\bw}^{(k)}}- \bmu_{\bg}^{(k)} } \leq \delta_{\mu,k} + \alpha_{\epsilon} \sqrt{\gamma\left({\bw}; \{\check{\bg}_n^{(k)}\}_{n\in [N]}, \widehat{\bmu} \right)}, \mkern9mu \alpha_{\epsilon} \coloneq \sqrt{\frac{\epsilon}{1-2\epsilon}}. 
	\end{align}
	In particular, $\alpha_{\epsilon} <1$ whenever $\epsilon <\frac{1}{3}$.
\end{theoremEnd}
\begin{proofEnd}
	By \cref{lemma:inlier-outlier-decomp-mean}, we get 
	\begin{align}
		\normtwo{\overline{\check{\bg}_{\bw}^{(k)}}- \bmu_{\bg}^{(k)} } &=	\normtwo{ \tau_{\inlier}(\bw) 
			\left(  \restr{\overline{\check{\bg}_{\bw}^{(k)}}}{\cI_{\inlier}}- \bmu_{\bg}^{(k)} \right) 
			+ \tau_{\outlier}(\bw) \left( \restr{\overline{\check{\bg}_{\bw}^{(k)}}}{\cI_{\outlier}} - \bmu_{\bg}^{(k)}  \right) }\\
		&= \normtwo{
			\left(  \restr{\overline{\check{\bg}_{\bw}^{(k)}}}{\cI_{\inlier}}- \bmu_{\bg}^{(k)} \right) 
			+ \tau_{\outlier}(\bw) \left( \restr{\overline{\check{\bg}_{\bw}^{(k)}}}{\cI_{\outlier}} -\restr{\overline{\check{\bg}_{\bw}^{(k)}}}{\cI_{\inlier}}  \right) }\\
		&\leq \normtwo{ \restr{\overline{\check{\bg}_{\bw}^{(k)}}}{\cI_{\inlier}}- \bmu_{\bg}^{(k)}} 
		+ \tau_{\outlier}(\bw) \normtwo{ \restr{\overline{\check{\bg}_{\bw}^{(k)}}}{\cI_{\outlier}} -\restr{\overline{\check{\bg}_{\bw}^{(k)}}}{\cI_{\inlier}}  }\label{step:inlier-outlier-decomp-mean}
	\end{align}
	
	By \cref{lemma:inlier-outlier-decomp-cov}, we get
	\begin{align}
		\tau_{\inlier}(\bw) \tau_{\outlier}(\bw) \left( \restr{\overline{\check{\bg}_{\bw}^{(k)}}}{\cI_{\inlier}} - \restr{\overline{\check{\bg}_{\bw}^{(k)}}}{\cI_{\outlier}} \right) \left( \restr{\overline{\check{\bg}_{\bw}^{(k)}}}{\cI_{\inlier}} - \restr{\overline{\check{\bg}_{\bw}^{(k)}}}{\cI_{\outlier}} \right)^\top &\preceq 	\check{S}_{\bg,\bw}^{(k)} \\
		\tau_{\inlier}(\bw) \tau_{\outlier}(\bw) \normtwo{\restr{\overline{\check{\bg}_{\bw}^{(k)}}}{\cI_{\inlier}} - \restr{\overline{\check{\bg}_{\bw}^{(k)}}}{\cI_{\outlier}}}^2 &\leq \normop{\check{S}_{\bg,\bw}^{(k)}}\\
		\normtwo{\restr{\overline{\check{\bg}_{\bw}^{(k)}}}{\cI_{\outlier}} - \restr{\overline{\check{\bg}_{\bw}^{(k)}}}{\cI_{\inlier}}} &\leq \sqrt{ \frac{\normop{\check{S}_{\bg,\bw}^{(k)}}}{ \tau_{\inlier}(\bw) \tau_{\outlier}(\bw)}}.
		\label{step:inlier-outlier-decomp-cov}
	\end{align}
	
	By \cref{lemma:centering-identity}, we know that 
	\begin{align}
		\normop{\check{S}_{\bg,\bw}^{(k)}} \leq  \gamma\left({\bw}; \{\check{\bg}_n^{(k)}\}_{n\in [N]}, \widehat{\bmu} \right).	\label{step:centering-identity}
	\end{align}
	Combining \cref{step:inlier-outlier-decomp-mean}, \cref{step:inlier-outlier-decomp-cov}, \cref{step:centering-identity}, and rearranging, we get 
	\begin{align}
		\normtwo{\overline{\check{\bg}_{\bw}^{(k)}}- \bmu_{\bg}^{(k)} } & \normtwo{ \restr{\overline{\check{\bg}_{\bw}^{(k)}}}{\cI_{\inlier}}- \bmu_{\bg}^{(k)}} 
		+ \tau_{\outlier}(\bw)  \sqrt{\frac{\gamma\left({\bw}; \{\check{\bg}_n^{(k)}\}_{n\in [N]}, \widehat{\bmu}\right)}{	\tau_{\inlier}(\bw) \tau_{\outlier}(\bw)}}.\\
		&\leq \delta_{\mu,k} + \sqrt{\frac{\tau_{\outlier}(\bw)}{\tau_{\inlier}(\bw)}} \sqrt{\gamma\left({\bw}; \{\check{\bg}_n^{(k)}\}_{n\in [N]}, \widehat{\bmu}\right)}\\
		&\leq  \delta_{\mu,k} + \underbrace{\sqrt{\frac{\epsilon}{1-2\epsilon}}}_{\eqcolon \alpha_{\epsilon}} \sqrt{\gamma\left({\bw}; \{\check{\bg}_n^{(k)}\}_{n\in [N]}, \widehat{\bmu}\right)},
	\end{align}
	where the last step is due to \cref{lemma:inlier-outlier-decomp-weight}. 
\end{proofEnd}

\begin{theoremEnd}[end, category=fixed-center-updates]{lemma}[Bound on $\OPT(\widehat{\bmu})$ in terms of the error]
	\label{lemma:bound-opt}
	Let $\widehat{\bmu}\in \R^{p}$ be an arbitrary vector and let 
	$\nu \coloneq \diam\left( \{\check{\bg}_n\} \right)^2 = \max_{i,j} \normtwo{\check{\bg}_i - \check{\bg}_j}^2$ be the normalizing scale as defined in \cref{lemma:normalizing-scale}
	
	Then, under \cref{ass:inlier-stability}, we have 
	\begin{align}
		\OPT(\widehat{\bmu}) \leq  \normop{\Sigma_{\bg}^{(k)}}  +  \delta_{\Sigma,k} +\left( \delta_{\mu,k}  + \normtwo{\widehat{\bmu} -\bmu_{\bg}^{(k)} } \right)^2.
	\end{align} 
\end{theoremEnd}
\begin{proofEnd}
	By \cref{lemma:oracle-feasible},  the oracle inlier weight is feasible, that is, $\bw^\sharp \in \Delta_{N,\epsilon}$. Apply the stability assumption, we get 
	\begin{align}
		\normtwo{\restr{\overline{\check{\bg}_{\bw^\sharp}^{(k)}}}{\cI_{\inlier}}  - \bmu_{\bg}^{(k)}} \leq \delta_{\mu,k}, \mkern9mu \normop{\restr{\check{S}_{\bg, \bw^\sharp}^{(k)}}{\cI_{\inlier}}} \leq \normop{\Sigma_{\bg}^{(k)}}  +  \delta_{\Sigma,k},
	\end{align}
	which implies that $ \normop{\restr{\check{S}_{\bg, \bw^\sharp}^{(k)}}{\cI_{\inlier}}} \leq \normop{\Sigma_{\bg}^{(k)}}  +  \delta_{\Sigma,k}$.
	Applying \cref{lemma:centering-identity} to $\bw^\sharp$ and an arbitrary center $\widehat{\bmu}$: 
	\begin{align}
		\normop{S \left(\bw^\sharp; \{\check{\bg}_n^{(k)}\}, \widehat{\bmu} \right)} &= \normop{ \restr{\check{S}_{\bg, \bw^\sharp}^{(k)}}{\cI_{\inlier}} + \left(\restr{\overline{\check{\bg}_{\bw^\sharp}^{(k)}}}{\cI_{\inlier}}  - \widehat{\bmu}\right) \left(\restr{\overline{\check{\bg}_{\bw^\sharp}^{(k)}}}{\cI_{\inlier}}  - \widehat{\bmu}\right)^{\top}} \\
		&\leq \normop{\restr{\check{S}_{\bg, \bw^\sharp}^{(k)}}{\cI_{\inlier}} }  + \normtwo{\restr{\overline{\check{\bg}_{\bw^\sharp}^{(k)}}}{\cI_{\inlier}}  - \widehat{\bmu}}^2 \\
		&\leq \normop{\Sigma_{\bg}^{(k)}} +  \delta_{\Sigma,k} + \normtwo{\restr{\overline{\check{\bg}_{\bw^\sharp}^{(k)}}}{\cI_{\inlier}}  - \widehat{\bmu}}^2\\
		&\leq \normop{\Sigma_{\bg}^{(k)}}  +  \delta_{\Sigma,k} + \left( \normtwo{\restr{\overline{\check{\bg}_{\bw^\sharp}^{(k)}}}{\cI_{\inlier}}  -  \bmu_{\bg}^{(k)}} + \normtwo{ \bmu_{\bg}^{(k)} - \widehat{\bmu}} \right)^2\\ 
		&\leq \normop{\Sigma_{\bg}^{(k)}}  +  \delta_{\Sigma,k} +\left( \delta_{\mu,k} + \normtwo{\widehat{\bmu} -\bmu_{\bg}^{(k)} } \right)^2.
	\end{align}
	Therefore,
	\begin{align}
		\OPT(\widehat{\bmu}) = \min_{\bw \in \Delta_{N,\epsilon}} \max_{\rho\in \frakD_p}
		\inner{S(\bw; \{\check{\bg}_n^{(k)}\},\widehat{\bmu}) }{\rho}&= \min_{\bw \in \Delta_{N,\epsilon}} \normop{S\left({\bw}; \{\check{\bg}_n^{(k)}\}, \widehat{\bmu}\right)}\\
		&\leq	\normop{S \left(\bw^\sharp; \{\check{\bg}_n^{(k)}\}, \widehat{\bmu} \right)} \\
		&\leq  \normop{\Sigma_{\bg}^{(k)}}  +  \delta_{\Sigma,k} +\left(\delta_{\mu,k} + \normtwo{\widehat{\bmu} -\bmu_{\bg}^{(k)} } \right)^2.
	\end{align}
	
\end{proofEnd}

\begin{theoremEnd}[end, category=fixed-center-updates]{theorem}[Outer-loop convergence]
	\label{thm:outer-loop-convergence}
	Let $\alpha_{\epsilon} = \sqrt{\frac{\epsilon}{1-2\epsilon}}$. Denote the error at the $s$-th outer-loop update as $e^{[s]} \coloneq 	\normtwo{\widehat{\bmu}^{[s]} - \bmu_{\bg}^{(k)}}$. Then, under \cref{ass:inlier-stability}, the recurrence inequality holds for all $s\geq 1$:
	\begin{align}
		e^{[s+1]} &\leq \alpha_{\epsilon}e^{[s]} + R_{\epsilon,T},
	\end{align} 
	where 
	\begin{align}
		\label{eq:R-eps-T} 
		R_{\epsilon,T} = (1+ \alpha_{\epsilon})\delta_{\mu,k} + \alpha_{\epsilon} \sqrt{ \normop{\Sigma_{\bg}^{(k)}}  +  \delta_{\Sigma,k} + 4 \nu \left( \sqrt{\frac{\log\left(\frac{1}{1-\epsilon}\right)}{T}} + \sqrt{\frac{\log p}{T}}\right)}.
	\end{align}
	
	As a result, for all $s\geq 1$,
	\begin{align}
		e^{[s]} &\leq  \alpha_{\epsilon}^{s-1} e^{[1]} + \frac{1- \alpha_{\epsilon}^{s-1}}{1-\alpha_{\epsilon}} R_{\epsilon,T},
	\end{align}
	
	In particular, the outer-loop fixed-center updates convergence geometrically to the convergence radius $R_{\infty}$, that is, 
	\begin{align}
		\limsup_{s\to\infty} e^{[s]} \leq R_{\infty} = \frac{R_{\epsilon,T}}{1-\alpha_{\epsilon}}.
	\end{align}
\end{theoremEnd}

\begin{remark}[Interpretation of the rate]
	\label{rem:rate-interpretation}
	The spectral term in \cref{eq:R-eps-T} gives a $\sqrt\epsilon\,\sqrt{\normop\Sigma}$ contamination contribution, which is the natural bounded-covariance robust-mean rate.  Sharper $O(\epsilon)$ or $O(\epsilon\sqrt{\log(1/\epsilon)})$ rates require stronger distributional assumptions in the spirit of the Gaussian and sub-Gaussian robust-mean analyses of \cite{dalalyan2022allinone}. In our work, we do not impose such assumptions since our priority is to assert minimal distributional assumptions in order to make the primitive \cref{alg:spectral-reweighting} as general as possible. 
\end{remark}

\begin{proofEnd}
	Using the fact that $ \widehat{\bw}^{[s+1]}$ is an $ \cO\left( \nu\sqrt{\frac{\log\left(\frac{1}{1-\epsilon}\right)}{T}} + \nu\sqrt{\frac{\log p}{T}}\right)$-approximate minimizer of $\bw \mapsto \gamma\left({\bw}; \{\check{\bg}_n^{(k)}\}_{n\in [N]}, \widehat{\bmu}^{[s]}\right)$, we get for all $s\geq 1$,
	\begin{align}
		e^{[s+1]} &\coloneq	\normtwo{\widehat{\bmu}^{[s+1]} - \bmu_{\bg}^{(k)}} = \normtwo{\overline{\check{\bg}_{\widehat{\bw}^{[s+1]}}^{(k)}}- \bmu_{\bg}^{(k)} } \\
		&\leq \delta_{\mu,k} + \alpha_{\epsilon} \sqrt{\gamma\left(\widehat{\bw}^{[s+1]}; \{\check{\bg}_n^{(k)}\}_{n\in [N]}, \widehat{\bmu}^{[s]}\right)} \\
		&\leq\delta_{\mu,k} + \alpha_{\epsilon} \sqrt{\OPT(\bmu^{[s]}) + \delta_{T}(\widehat{\bmu}^{[s]}) } \\
		& \leq  \delta_{\mu,k} + \alpha_{\epsilon} \sqrt{ \normop{\Sigma_{\bg}^{(k)}}  +  \delta_{\Sigma,k} +\left( \delta_{\mu,k} + \normtwo{\widehat{\bmu}^{[s]} -\bmu_{\bg}^{(k)} } \right)^2 + \delta_{T}(\widehat{\bmu}^{[s]}) } \\
		&\leq  \delta_{\mu,k} + \alpha_{\epsilon}\left( \delta_{\mu,k} + \normtwo{\widehat{\bmu}^{[s]} -\bmu_{\bg}^{(k)} } \right) + \alpha_{\epsilon} \sqrt{ \normop{\Sigma_{\bg}^{(k)}}  +  \delta_{\Sigma,k} + \delta_{T}(\widehat{\bmu}^{[s]})}\\
		&= \alpha_{\epsilon} \underbrace{\normtwo{\widehat{\bmu}^{[s]} -\bmu_{\bg}^{(k)} }}_{\eqcolon e^{[s]}} +
		\underbrace{(1+ \alpha_{\epsilon}) \delta_{\mu,k} + \alpha_{\epsilon} \sqrt{ \normop{\Sigma_{\bg}^{(k)}}  +  \delta_{\Sigma,k} + \delta_{T}(\widehat{\bmu}^{[s]})}}_{\eqcolon R_{\epsilon,T}},
	\end{align}
	which gives the desired recurrent inequality 
	\begin{align}
		e^{[s+1]} &\leq \alpha_{\epsilon}e^{[s]} + R_{\epsilon,T},
	\end{align} 
	where
	\begin{align}
		R_{\epsilon,T} &= (1+ \alpha_{\epsilon})\delta_{\mu,k} + \alpha_{\epsilon} \sqrt{ \normop{\Sigma_{\bg}^{(k)}}  +  \delta_{\Sigma,k} + \delta_{T}(\widehat{\bmu}^{[s]})}\\
		&=  (1+ \alpha_{\epsilon})\delta_{\mu,k} + \alpha_{\epsilon} \sqrt{ \normop{\Sigma_{\bg}^{(k)}}  +  \delta_{\Sigma,k} + 4 \nu \left( \sqrt{\frac{\log\left(\frac{1}{1-\epsilon}\right)}{T}} + \sqrt{\frac{\log p}{T}}\right)}.
	\end{align}
	
	Since $R_{\epsilon,T}$ is a constant, by using the standard result for solution of the affine linear recurrence, we get
	\begin{align}
		e^{[s]} &\leq  \alpha_{\epsilon}^{s-1} e^{[1]} + \frac{1- \alpha_{\epsilon}^{s-1}}{1-\alpha_{\epsilon}} R_{\epsilon,T},
	\end{align}
	which implies that the outer-loop fixed-center updates convergence geometrically to the explicit radius, that is, 
	\begin{align}
		\label{eq:convergence-radius}
		\limsup_{s\to\infty} e^{[s]} \leq	R_{\infty} = \frac{R_{\epsilon,T}}{1-\alpha_{\epsilon}} = \frac{(1+ \alpha_{\epsilon}) \delta_{\mu,k} + \alpha_{\epsilon} \sqrt{ \normop{\Sigma_{\bg}^{(k)}}  +  \delta_{\Sigma,k} + \delta_{T}(\widehat{\bmu}^{[s]})}}{1-\alpha_{\epsilon}}.
	\end{align}
\end{proofEnd}

\begin{theoremEnd}[end, category=fixed-center-updates]{theorem}[Finite outer-loop termination]
	\label{thm:sgr-termination-error}
	As per \cref{alg:spectral-reweighting}, the termination criteria be $\normop{\overline{S}^{[s]}} =\gamma\left(\widehat{\bw}^{[s]}; \{\check{\bg}_n^{(k)}\}_{n\in [N]}, \widehat{\bmu}^{[s]}\right)  \leq C_{\text{stop}, k}$. 
	Fix a target radius $R_k > R_{\infty}$, where $R_{\infty}$ is the radius of convergence defined in \cref{eq:convergence-radius}.  Under \cref{ass:inlier-stability}, choosing the termination condition of \cref{alg:spectral-reweighting}, namely $\gamma\left(\widehat{\bw}^{[s]}; \{\check{\bg}_n^{(k)}\}_{n\in [N]}, \widehat{\bmu}^{[s]}\right) \leq C_{\text{stop}, k}$, such that
	\begin{align}
		\label{eq:Cstop}
		C_{\text{stop}, k} \geq \normop{\Sigma_{\bg}^{(k)}} + \delta_{\Sigma,k} + ( \delta_{\mu,k} + R_k)^2 + \delta_{T},
	\end{align}
	then we have the following: 
	\begin{enumerate}
		\item If the error is within the radius of convergence $e^{[s]}\leq R$, then the termination condition of \cref{alg:spectral-reweighting} is satisfied at iteration $s$, no later than
		\begin{align}
			s_{\max} \coloneq 1+\ceil{\frac{\log\left( \frac{(e^{[1]} - R_{\infty})}{(R - R_{\infty})_{+}}\right)}{\log\left(\frac{1}{\alpha_{\epsilon}}\right)}}.
		\end{align}
		\item If the algorithm stops at iteration $s$ and returns the weight vector $\bw^{[s]}$, then the corresponding weighted mean satisfies 
		\begin{align}
			\normtwo{\overline{\check{\bg}_{\widehat{\bw}^{[s]}}^{(k)}} - \bmu_{\bg}^{(k)}} \leq \delta_{\mu,k} + \alpha_{\epsilon} \sqrt{C_{\text{stop}, k}}.
		\end{align}
	\end{enumerate}
\end{theoremEnd}

\begin{proofEnd}
	Fix a target radius $R_k > R_{\infty}$. By \cref{thm:outer-loop-convergence}, 
	\begin{align}
		\alpha_{\epsilon}^{s-1} (e^{[1]} - R_{\infty}) \leq R_k - R_{\infty}.
	\end{align}
	
	Solving for $s$, we get that for all $s\geq s_{\max} \coloneq 1+\ceil{\frac{\log\left( \frac{(e^{[1]} - R_{\infty})_{+}}{(R_k - R_{\infty})_{+}}\right)}{\log\left(\frac{1}{\alpha_{\epsilon}}\right)}}$, we have
	\begin{align}
		e^{[s]} \leq R_k		\implies \gamma\left(\widehat{\bw}^{[s]}; \{\check{\bg}_n^{(k)}\}_{n\in [N]}, \widehat{\bmu}^{[s]}\right) &\leq \normop{\Sigma_{\bg}^{(k)}} + \delta_{\Sigma,k} + ( \delta_{\mu,k} + R_k)^2 + \delta_{T} \leq  C_{\text{stop}, k},
	\end{align}
	where the last inequality is due to  \cref{lemma:bound-opt} and \cref{thm:spectral-norm-after-mw-mmw}. Therefore, with the stopping rule $ \gamma\left(\widehat{\bw}^{[s]}; \{\check{\bg}_n^{(k)}\}_{n\in [N]}, \widehat{\bmu}^{[s]}\right) \leq C_{\text{stop}, k}$, \cref{alg:spectral-reweighting} terminates after at most $	s_{\max} \coloneq 1+\ceil{\frac{\log\left( \frac{(e^{[1]} - R_{\infty})_{+}}{(R_k - R_{\infty})_{+}}\right)}{\log\left(\frac{1}{\alpha_{\epsilon}}\right)}}$ outer iterations. 
\end{proofEnd}

\begin{remark}
	\cref{thm:outer-loop-convergence}  gives a contraction guarantee of the form
	\begin{align}
		e^{[s+1]} \leq \alpha_{\epsilon} e^{[s]} + \text{statistical floor} + \text{optimization floor}.
	\end{align}
	
	Combining \cref{thm:spectral-norm-after-mw-mmw} with \cref{thm:outer-loop-convergence}, we obtain the explicit dependence
	\begin{align}
		R_\infty = \frac{(1+\alpha_{\epsilon})\delta}{1-\alpha_{\epsilon}} +
		\frac{\alpha_{\epsilon}}{1-\alpha_{\epsilon}}
		\sqrt{\normop{\Sigma}+\delta_{\Sigma,k} +4\nu\left(\sqrt{\frac{\log\left(\frac{1}{1-\epsilon}\right)}{T}}+\sqrt{\frac{\log p}{T}}\right)}.
	\end{align}
	Thus, to make the optimization floor no larger than a prescribed amount $R_{\mathrm{opt}}>0$, it is sufficient to impose
	\begin{align}
		\delta_{T}\leq  \left(\frac{(1-\alpha_\epsilon) R_{\mathrm{opt}}}{\alpha_\epsilon}\right)^2,
	\end{align}
	which, by \cref{thm:spectral-norm-after-mw-mmw}, is ensured by the explicit inner-loop budget
	\begin{equation}
		\label{eq:T-choice}
		T \gtrsim \nu^2\left(\log p+\log\left(\frac{1}{1-\epsilon}\right)\right)
		\left(\frac{\alpha_\epsilon}{(1-\alpha_\epsilon)R_{\mathrm{opt}}}\right)^4.
	\end{equation}
\end{remark}

\subsection{Local finite-sample GMM analysis}
\label{sec:finite-sample-gmm}
\textEnd[both, category=finite-sample-gmm]{The results in the previous sections control the error for robust moment gradient estimation.  In
	this section, we convert the gradient estimation error for \cref{alg:spectral-reweighting} into a local finite-sample parameter estimation error for \cref{alg:robust-sgr-gmm}. 
	Let $\widehat\bw^{(k)}(\btheta)\in\Delta_{N,\epsilon}$ be the output of \cref{alg:spectral-reweighting} run on the score cloud $\{\check\bg_n^{(k)}(\btheta)\}_{n=1}^N$. Let \(\Pi_k:\mathbb R^q\to\mathbb R^{q_k}\) denote the canonical projection onto
	the \(k\)-th moment block. Let $A_k(\btheta) \coloneq G(\btheta)^\top W \Pi_k^\top \in \R^{p\times q_k}$. Then, the population moment gradients and the SGR-weighted per-observation moment gradient can be rewritten in terms of the moment blocks: 
	\begin{align}
		\Psi(\btheta)
		&\coloneq G(\btheta)^\top W m(\btheta) = \sum_{k=1}^L A_k(\btheta) m_k(\btheta) = \sum_{k=1}^L  a_k \bmu_{\bg}^{(k)}(\btheta),
		\label{eq:population-gmm-score}\\
		\widehat\Psi^{(\sgr)}(\btheta)
		&\coloneq \sum_{k=1}^L a_k \sum_{n=1}^N \widehat w_n^{(k)}(\btheta)\,
		\check\bg_n^{(k)}(\btheta).
		\label{eq:sgr-score}
\end{align}}

We state the finite-sample analogues of the standard rank and differentiability assumptions used in classical GMM theory. 

\begin{assumption}[Standard GMM local identification conditions, see, e.g., \cite{hansen1982large, newey1994large, hall2004generalized}]
	\label{ass:local-identification}
	We assume that there exists \(r_0>0\) such that the closed ball
	\begin{align}
		\cB_0:=\{\btheta\in\Theta:\normtwo{\btheta-\btheta^\star}\leq r_0\}
	\end{align}
	is contained in \(\Theta\) and the following standard GMM local smooth identification conditions hold: 
	\begin{enumerate}
		\item \textbf{Correct specification:} $m (\btheta^\star) = \bm{0}$ (and consequently, $\Psi(\btheta^\star) =\bm{0}$).
		
		\item \textbf{Differentiability:} $m(\btheta)$ is continuously differentiable on
		$\cB_0$, with derivative $G(\btheta) = \nabla_{\btheta} m(\btheta)$, and there is a Lipschitz constant $L_G<\infty$ such that
		\begin{equation}\label{eq:G-lipschitz}
			\normop{G(\btheta)-G(\btheta')}
			\leq L_G\normtwo{\btheta-\btheta'}
			\qquad \text{for all }\btheta,\btheta'\in\cB_0,
		\end{equation}
		
		\item \textbf{Full-rank local identification:} with $G^\star=G(\btheta^\star)$,
		\begin{equation}\label{eq:Hstar-lambda}
			H^\star=(G^\star)^\top W G^\star,
			\qquad
			\lambda^\star=\lambda_{\min}(H^\star)>0,
		\end{equation}
		
		\item \textbf{Local radius condition:}
		\begin{equation}\label{eq:radius-condition}
			\normop{W}L_G r_0\left(\frac32\normop{G^\star}+\frac12 L_G r_0\right)
			\leq \frac{\lambda^\star}{2}.
		\end{equation}
	\end{enumerate}
\end{assumption}
\begin{remark}[Relation to classical GMM rank assumptions]
	The Lipschitz derivative
	condition is a quantitative finite-sample version of the differentiability and continuity hypotheses in \cite[Assumption~3.5]{hall2004generalized}. The condition $\lambda_{\min}((G^\star)^\top W G^\star)>0$ is the local full-rank identification condition for the weighted moment map.  It is the finite-dimensional version of \cite[Assumption~3.6]{hall2004generalized}
	and \cite[Theorem~3.4]{newey1994large}. The radius condition \cref{eq:radius-condition} makes explicit how small the local basin must be for the nonlinear score to remain strongly monotone.
\end{remark}

\begin{theoremEnd}[end, category=finite-sample-gmm]{lemma}[\cref{ass:local-identification} implies local monotonicity]\label{lem:local-monotonicity}
	Under \cref{ass:local-identification}, for every $\btheta\in\cB_0$,
	\begin{equation}\label{eq:local-monotonicity-report}
		\inner{\Psi(\btheta)}{\btheta-\btheta^\star}
		\ge \frac{\lambda^\star}{2}\normtwo{\btheta-\btheta^\star}^2.
	\end{equation}
\end{theoremEnd}

\begin{proofEnd}
	Let 
	\begin{align}
		\bDelta\coloneq\btheta-\btheta^\star, \mkern9mu  \overline G(\btheta) \coloneq \int_0^1 G(\btheta^\star+t\bDelta)\mathrm{d} t. 
	\end{align}
	Since $m(\btheta^\star)=0$ and $m$ is continuously differentiable on the line segment between $\btheta^\star$ and $\btheta$,
	\begin{equation}\label{eq:m-mean-value}
		m(\btheta)
		=\overline G(\btheta) \bDelta.
	\end{equation}
	Therefore, by the definition \eqref{eq:population-gmm-score},
	\begin{align}
		\Psi(\btheta)&= G(\btheta)^\top W m(\btheta) = G(\btheta)^\top	W \overline G(\btheta)\bDelta.\\
		\implies \inner{\Psi(\btheta)}{\bDelta}
		&=\bDelta^\top G(\btheta)^\top W\overline G(\btheta)\bDelta.		\label{eq:score-inner-proof}
	\end{align}
	By the Lipschitz condition \eqref{eq:G-lipschitz} in \cref{ass:local-identification},
	\begin{equation}\label{eq:Gbar-proof-bounds}
		\normop{G(\btheta)-G^\star}\le L_G\normtwo{\bDelta}.
	\end{equation}
	Since 
	\begin{align}
		\overline G(\btheta)  - G^\star = \int_0^1 G(\btheta^\star+t\bDelta) - G^\star \mathrm{d} t,
	\end{align}
	we get 
	\begin{align}
		\normop{\overline G(\btheta)-G^\star}\le \int_0^1 L_G t \normtwo{\bDelta} \mathrm{d} t =  \frac12 L_G\normtwo{\bDelta}.
	\end{align}
	By expanding the terms, we get 
	\begin{align*}
		\normop{G(\btheta)^\top W\overline G(\btheta)-(G^\star)^\top WG^\star} &\le
		\normop{G(\btheta)-G^\star}\normop{W}\normop{\overline G(\btheta)}
		+\normop{G^\star}\normop{W}\normop{\overline G(\btheta)-G^\star} \\
		& \quad  + \normop{G(\btheta)-G^\star}\normop{W}\normop{G^\star}\numberthis\\
		&\le
		\normop{W}L_G\normtwo{\bDelta}
		\left(\frac32\normop{G^\star}+\frac12L_G\normtwo{\bDelta}\right)
		\le \frac{\lambda^\star}{2},\numberthis
	\end{align*}
	where the final inequality is due to $\normtwo{\bDelta}\le r_0$ and the local radius condition \eqref{eq:radius-condition} in \cref{ass:local-identification}.
	
	Since $H^\star = (G^\star)^\top WG^\star, \lambda^\star = \lambda_{\min}(H^\star) > 0$ by \eqref{eq:score-inner-proof} in \cref{ass:local-identification}, we get
	\begin{align*}
		\inner{\Psi(\btheta)}{\bDelta} &= \bDelta^\top G(\btheta)^\top W\overline G(\btheta)\bDelta \numberthis\\
		&\ge
		\bDelta^\top (G^\star)^\top WG^\star\bDelta
		-\frac{\lambda^\star}{2}\normtwo{\bDelta}^2 \numberthis\\
		&\ge
		\lambda^\star\normtwo{\bDelta}^2
		-\frac{\lambda^\star}{2}\normtwo{\bDelta}^2
		=\frac{\lambda^\star}{2}\normtwo{\bDelta}^2.\numberthis
	\end{align*}
\end{proofEnd}

We state the high-probability version of \cref{ass:inlier-stability} and \cref{thm:sgr-termination-error} in the form used for the local finite-sample GMM theorem in \cref{thm:finite-sample-parameter}.
\begin{assumption}[Inlier stability conditions, high-probability version]
	\label{ass:inlier-stability-high-prob} 	
	Fix $0\leq \epsilon<1/3$ and $\zeta\in(0,1)$. With probability at least $1-\zeta$ over the inliers, the following event holds for every $k\in[L], \btheta\in\cB_0, \bw \in \Delta_{\cI_{\inlier}, \frac{\epsilon}{1-\epsilon}}$, 
	\begin{align}
		&\normtwo{\sum_{n\in \cI_{\inlier}} w_n 	\left(\check{\bg}_n^{(k)}(\btheta) - \bmu_{\bg}^{(k)}(\btheta) \right)} \leq \delta_{\mu,k}(\zeta),\\
		&\sum_{n\in\cI_{\inlier}}w_n  \left(\check{\bg}_n^{(k)}(\btheta)  - \bmu_{\bg}^{(k)}(\btheta) \right) \left(\check{\bg}_n^{(k)}(\btheta)  - \bmu_{\bg}^{(k)}(\btheta) \right)^{\top}  \preceq  \Sigma_{\bg}^{(k)}(\btheta)   + \delta_{\Sigma,k}(\zeta) I,
	\end{align}
\end{assumption}

\begin{corollary}\label{thm:sgr-termination-error-high-prob}
	Fix $\btheta\in\cB_0$ and $k\in[L]$. Under \cref{ass:inlier-stability-high-prob} and on the same event, the output $\widehat\bw^{(k)}(\btheta)$ of  \cref{alg:spectral-reweighting} satisfies
	\begin{equation}
		\normtwo{\sum_{n=1}^N\widehat w_n^{(k)}(\btheta)\check\bg_n^{(k)}(\btheta)
			-\bmu_{\bg}^{(k)}(\btheta)}
		\leq \delta_{\mu,k} (\zeta) +\alpha_\epsilon\sqrt{C_{k}}, \mkern9mu \alpha_{\epsilon} = \sqrt{\frac{\epsilon}{1-2\epsilon}},
	\end{equation}
	where $C_k$ is either of the following two cases:
	\begin{enumerate}
		\item If the stopping certificates are specified directly, then $C_k = C_{\text{stop}, k}$.
		\item If the stopping certificates are obtained from the MW-MMW rounds and the outer-loop convergence radius is \(R_k\), then it is sufficient to take
		\begin{equation}\label{eq:r-fixedbudget}
			C_k = \sup_{\btheta \in \cB_0}\normop{\Sigma_{\bg}^{(k)}(\btheta)}  +\delta_{\Sigma,k}(\zeta) + \left(\delta_{\mu,k}(\zeta) + R_k \right)^2 +\delta_{T, k},
		\end{equation}
		where, for squared diameter \(\nu_k\),
		\begin{equation}\label{eq:delta-T}
			\delta_{T, k}
			\leq 4\nu_k\left\{\sqrt{\frac{\log p}{T}}+
			\sqrt{\frac{\log(1/(1-\epsilon))}{T}}\right\},
		\end{equation}
	\end{enumerate}
\end{corollary}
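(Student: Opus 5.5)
The plan is to reduce the corollary, pointwise in $\btheta$ and $k$, to the deterministic results of \cref{sec:fixed-center-updates}. Fix $\btheta\in\cB_0$ and $k\in[L]$, and work on the probability-$(1-\zeta)$ event of \cref{ass:inlier-stability-high-prob}. On that event, the gradient cloud $\{\check\bg_n^{(k)}(\btheta)\}_{n=1}^N$ satisfies \cref{ass:inlier-stability} with constants $\delta_{\mu,k}=\delta_{\mu,k}(\zeta)$ and $\delta_{\Sigma,k}=\delta_{\Sigma,k}(\zeta)$, and with population quantities $\bmu_{\bg}^{(k)}(\btheta)$ and $\Sigma_{\bg}^{(k)}(\btheta)$. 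The event is uniform over $\btheta\in\cB_0$ and $k$. So no union bound is needed, and every deterministic lemma (\cref{lemma:contraction-factor}, \cref{lemma:bound-opt}, \cref{thm:outer-loop-convergence}, \cref{thm:sgr-termination-error}) applies verbatim at this $\btheta$.

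\textbf{Case 1.} Suppose \cref{alg:spectral-reweighting} stops at outer iteration $s$ with output $\widehat\bw^{(k)}(\btheta)=\overline{\bw}^{[s]}$. By the stopping rule, $\gamma(\overline{\bw}^{[s]};\{\check\bg_n^{(k)}\},\widehat\bmu^{[s]})=\normop{\overline S^{[s]}}\le C_{\text{stop},k}$, using the identity $\overline S^{[s]}=S(\overline\bw^{[s]};\cdot,\widehat\bmu^{[s]})$ from \cref{thm:spectral-norm-after-mw-mmw}. Now apply \cref{lemma:contraction-factor} with $\bw=\overline\bw^{[s]}\in\Delta_{N,\epsilon}$ (the set is convex, so the average stays in it) and $\widehat\bmu=\widehat\bmu^{[s]}$. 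Since $\sum_n \widehat w_n\check\bg_n^{(k)}=\overline{\check\bg^{(k)}_{\widehat\bw}}$, this gives
\[
\normtwo{\textstyle\sum_n \widehat w_n\check\bg_n^{(k)}-\bmu_{\bg}^{(k)}}\le \delta_{\mu,k}(\zeta)+\alpha_\epsilon\sqrt{C_{\text{stop},k}}.
\]
This is item 2 of \cref{thm:sgr-termination-error}.

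\textbf{Case 2.} By \cref{thm:outer-loop-convergence}, the errors $e^{[s]}$ eventually fall below $R_k>R_\infty$. At such an $s$, \cref{thm:spectral-norm-after-mw-mmw} and \cref{lemma:bound-opt} bound the certificate:
\[
\gamma\le \OPT(\widehat\bmu^{[s]})+\delta_{T,k}\le \normop{\Sigma^{(k)}_{\bg}(\btheta)}+\delta_{\Sigma,k}(\zeta)+(\delta_{\mu,k}(\zeta)+R_k)^2+\delta_{T,k}.
\]
Here $\delta_{T,k}$ is exactly the regret term \cref{eq:delta-T}, with $\nu_k$ the squared diameter of the cloud at $\btheta$; a uniform bound in $\btheta$ can be taken if desired. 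Replacing $\normop{\Sigma_{\bg}^{(k)}(\btheta)}$ by its supremum over $\cB_0$ yields a $\btheta$-independent $C_k$ with $\gamma\le C_k$. Feeding this into \cref{lemma:contraction-factor} as in Case 1 gives the claimed bound with $C_k$ from \cref{eq:r-fixedbudget}.

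\textbf{Main obstacle.} The delicate point is matching the certificate to the returned weights. \cref{lemma:contraction-factor} needs $\gamma$ evaluated at the output weights and \emph{some} center. In \cref{alg:spectral-reweighting}, the output $\overline\bw^{[s]}$ is certified against the center $\widehat\bmu^{[s]}$ used to generate it, which is consistent. The reason this works is that the lemma holds for an arbitrary center, by \cref{lemma:centering-identity}. So I would state explicitly that the center in the certificate need not equal the weighted mean of the returned weights. I would also note that Case 2 presumes the chosen $C_{\text{stop},k}\ge C_k$, so that termination occurs within $s_{\max}$ steps by item 1 of \cref{thm:sgr-termination-error}.
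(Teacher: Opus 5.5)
Your reduction to the deterministic results on the uniform event and your Case 1 are correct, and they match how the paper intends the corollary to follow. Item 2 of \cref{thm:sgr-termination-error} is just \cref{lemma:contraction-factor} applied to the returned averaged weights, with the certificate's own center $\widehat\bmu^{[s]}$. This is legitimate precisely because that lemma allows an arbitrary center, as you point out.

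Case 2, however, does not connect the bound to the \emph{returned} weights. You show that at any iterate $s$ with $e^{[s]}\le R_k$ the certificate is at most $C_k$, and you apply \cref{lemma:contraction-factor} at that iterate. But \cref{alg:spectral-reweighting} does not return that iterate. It returns the first $\overline\bw^{[s']}$ with $\normop{\overline S^{[s']}}\le C_{\text{stop},k}$.

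Under your closing assumption $C_{\text{stop},k}\ge C_k$, the algorithm may stop at an earlier $s'$ whose certificate lies in $(C_k, C_{\text{stop},k}]$ and where $e^{[s']}$ has not yet dropped below $R_k$. For that output you only obtain $\delta_{\mu,k}(\zeta)+\alpha_\epsilon\sqrt{C_{\text{stop},k}}$, which is strictly weaker than the claim whenever $C_{\text{stop},k}>C_k$.

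The inequality points the wrong way for the error bound. Two conditions are needed:
\begin{enumerate}
\item $C_{\text{stop},k}\le C_k$, so that any certificate passing the stopping test is at most $C_k$;
\item $C_{\text{stop},k}\ge \normop{\Sigma_{\bg}^{(k)}(\btheta)}+\delta_{\Sigma,k}(\zeta)+(\delta_{\mu,k}(\zeta)+R_k)^2+\delta_{T,k}$, the pointwise threshold of \cref{eq:Cstop}, so that termination is guaranteed.
\end{enumerate}

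The fix, which is the paper's intended reading of the second case, is to set $C_{\text{stop},k}=C_k$ exactly. Because of the supremum over $\cB_0$, $C_k$ dominates the pointwise threshold at every $\btheta$. Item 1 of \cref{thm:sgr-termination-error} then guarantees termination. At whatever step the algorithm stops, the certificate is at most $C_k$, so Case 2 reduces to your Case 1 with $C_{\text{stop},k}=C_k$. With that change your argument is complete.
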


\begin{assumption}[Numerical optimizer conditions]\label{ass:numerical-optimizer-conditions}
	The numerical optimizer used in \cref{alg:robust-sgr-gmm} returns a \(\widehat\btheta\) that satisfies
	\begin{equation}
		\label{eq:theta-local}
		\widehat\btheta \in\cB_0,
	\end{equation}
	and
	\begin{equation}
		\label{eq:opt-residual}
		\normtwo{\widehat\Psi^{(\sgr)}(\widehat\btheta)}\le\delta_{\opt}.
	\end{equation}
\end{assumption}

\begin{theoremEnd}[end, category=finite-sample-gmm]{theorem}[Local finite-sample parameter estimation error for \cref{alg:robust-sgr-gmm}]\label{thm:finite-sample-parameter}		
	Fix $0\leq \epsilon<1/3$ and $\zeta\in(0,1)$. Suppose that \cref{ass:local-identification}, \cref{ass:inlier-stability-high-prob}, and \cref{ass:numerical-optimizer-conditions} hold. Then the following holds with probability at least $1-\zeta$:
	\begin{align}\label{eq:main-error-bound}
		\normtwo{\widehat\btheta^{(\sgrgmm)}-\btheta^\star}
		\leq
		\frac{2}{\lambda^\star}
		\left( \underbrace{\sum_{k=1}^L a_k \left(\delta_{\mu,k}(\zeta) +\alpha_\epsilon\sqrt{C_{k}} \right)}_{\text{SGR error}} + \underbrace{\delta_{\opt} }_{\text{optimizer error}}  \right), \mkern9mu \alpha_{\epsilon} = \sqrt{\frac{\epsilon}{1-2\epsilon}},
	\end{align}
	where 
	\begin{align}
		C_{k} = \begin{cases}
			C_{\text{stop}, k}, & (\text{if $C_{\text{stop}, k}$ are specified directly})\\
			\sup_{\btheta \in \cB_0}\normop{\Sigma_{\bg}^{(k)}(\btheta)} +\delta_{\Sigma,k}(\zeta) + \left(\delta_{\mu,k}(\zeta) + R_k \right)^2   +\delta_{T, k}, & (\text{otherwise}).
		\end{cases}
	\end{align}
\end{theoremEnd}

\begin{proofEnd}
	Set $\widehat\btheta=\widehat\btheta^{(\sgrgmm)}$ and
	$\widehat{\bDelta}=\widehat\btheta-\btheta^\star$.  If $\widehat{\bDelta}=0$, the theorem is trivial, so assume
	$\widehat{\bDelta}\neq0$.  
	
	Under \cref{ass:local-identification}, by applying \cref{lem:local-monotonicity}, we get 
	\begin{equation}\label{eq:proof-monotone-start}
		\frac{\lambda^\star}{2}\normtwo{\widehat{\bDelta}}^2
		\leq \inner{\Psi(\widehat\btheta)}{\widehat{\bDelta}}
		\leq \normtwo{\Psi(\widehat\btheta)}\normtwo{\widehat{\bDelta}},
	\end{equation}
	where the second inequality is Cauchy's inequality.  It remains to bound
	$\normtwo{\Psi(\widehat\btheta)}$. 
	\begin{align}\label{eq:score-triangle}
		\normtwo{\Psi(\widehat\btheta)}
		&\leq
		\normtwo{\Psi(\widehat\btheta)-\widehat\Psi^{(\sgr)}(\widehat\btheta)}
		+\normtwo{\widehat\Psi^{(\sgr)}(\widehat\btheta)}.
	\end{align}
	Under \cref{ass:inlier-stability-high-prob}, by applying  \cref{thm:sgr-termination-error-high-prob} and using the
	triangle inequality, we get a bound on the first term in \cref{eq:score-triangle}:
	\begin{align}\label{eq:sgr-bound}
		\normtwo{\Psi(\widehat\btheta)-\widehat\Psi^{(\sgr)}(\widehat\btheta)}
		\leq \sum_{k=1}^L a_k \left(\delta_{\mu,k}(\zeta) +\alpha_\epsilon\sqrt{C_{k}} \right),
	\end{align}
	where 
	\begin{align}
		C_{k} = \begin{cases}
			C_{\text{stop}, k}, & (\text{if $C_{\text{stop}, k}$ are specified directly})\\
			\sup_{\btheta \in \cB_0}\normop{\Sigma_{\bg}^{(k)}(\btheta)} +\delta_{\Sigma,k}(\zeta) + \left(\delta_{\mu,k}(\zeta) + R_k \right)^2   +\delta_{T, k}, & (\text{otherwise}).
		\end{cases}
	\end{align}
	
	The second term in \cref{eq:score-triangle} is the optimizer residual \cref{eq:opt-residual}, which has the following bound due to \cref{ass:numerical-optimizer-conditions}
	\begin{align}
		\label{eq:optimizer-solution-bound}
		\normtwo{\widehat\Psi^{(\sgr)}(\widehat\btheta)} \leq \delta_{\opt}. 
	\end{align}
	
	Combining \cref{eq:score-triangle}, \cref{eq:sgr-bound}, and \cref{eq:optimizer-solution-bound}, we get 
	\begin{align}\label{eq:score-triangle-final}
		\normtwo{\Psi(\widehat\btheta)} \leq  	\normtwo{\Psi(\widehat\btheta)-\widehat\Psi^{(\sgr)}(\widehat\btheta)}+\normtwo{\widehat\Psi^{(\sgr)}(\widehat\btheta)}
		\leq \sum_{k=1}^L a_k \left(\delta_{\mu,k}(\zeta) +\alpha_\epsilon\sqrt{C_{k}} \right) + \delta_{\opt}. 
	\end{align}
	Combining
	\cref{eq:proof-monotone-start} and \cref{eq:score-triangle-final}, and dividing by
	$\normtwo{\widehat{\bDelta}}$, gives \cref{eq:main-error-bound}. 
\end{proofEnd}

\begin{remark}
	The local finite-sample parameter estimation error of \cref{alg:robust-sgr-gmm} has the following distinct components: 
	\begin{enumerate}
		\item The factor \(\lambda^\star\) is the local GMM identification strength.  Weak identification inflates every error term.
		\item \(\delta_{\mu,k}(\zeta)\) depends on the clean inlier stability. 
		\item \(\alpha_\epsilon\sqrt{C_k}\) is the robust reweighting contribution.  Since \(\alpha_\epsilon\asymp\sqrt\epsilon\) for small \(\epsilon\), the bound recovers the bounded-covariance robust-mean scaling; sharper \(\epsilon\sqrt{\log(1/\epsilon)}\) behavior requires stronger score-tail assumptions and a sharper robust mean primitive.
		\item \(\delta_{T,k}\) is the inner MW-MMW optimization floor.  Since \(\delta_{T,k}=O(\nu_k\sqrt{(\log p+\log(1/(1-\epsilon)))/T})\), the parameter bound contains this through \(\sqrt{C_k}\).  The outer optimizer contributes separately through \(\delta_{\opt}\).
	\end{enumerate}
\end{remark}

\begin{remark}
	\label{rem:global-local}
	\cref{thm:finite-sample-parameter} proves a local deterministic perturbation theorem.  To assert that an arbitrary run of \(\lbfgs\) reaches the correct basin of a nonconvex objective would require additional global identification conditions, e.g., \cite[discussion following Eq. (1.4)]{newey1994large} or additional landscape conditions analogous in spirit to the IRLS analysis of \cite{lermanmaunu2018fast} or the landscape analysis of \cite{cheng2020high}.  
\end{remark}

\section{Robust DGMM specialization for Gaussian mixture modeling}
\label{sec:robust-dgmm}

\subsection{Heteroscedastic low-rank GMs under additive noise and adversarial contamination}

\begin{model}[Heteroscedastic low-rank GMs] \label{model:heteroscedastic}
	Fix the number of mixture components $K \geq 2$. Let $h \in [K]$ be a discrete random variable such that $0 < \probP(h = j) = \pi_j < 1$ for $j = 1, \dots, K$ and $\sum_{j=1}^K \pi_j = 1$, where $\pi_j$ is the mixing probability of the $j$-th mixture component. Conditional on $h = j$, each mixture component $\bX_j \in \R^d$ is a \emph{\textbf{low-rank Gaussian}}, that is, 
	\begin{align}
		\bX \mid (h = j) \sim \cN(\bmu_j, \Sigma_j), \mkern9mu  R_j \coloneqq \rank \Sigma_j  \leq R_{\max} \coloneqq \max\{R_1,\dots,R_K\} \leq d.
	\end{align}
	
	Then the random vector $\bY$ drawn as $\bX_h$ is said to be a \emph{\textbf{heteroscedastic low-rank Gaussian Mixtures (GMs)}} and has the following data generating process:
	\begin{equation}\label{eq:data-gen-heteroscedastic}
		{\by_n} = {\bmu}_{h} + \bm{\Xi}_{n}, \mkern9mu \bm{\Xi}_{n} \iid \cN(\bm{0},\Sigma_{h}), \mkern9mu n = 1, \dots, N.
	\end{equation}
	In addition, if the components $\bX_j$ are weakly separated, i.e., $\norm{\Sigma_j}_F \gg \norm{{\bmu}_j}_2$, then the random vector $\bY$ drawn as $\bX_h$ is said to be a \emph{\textbf{weakly separated heteroscedastic low-rank GMs}}.
\end{model}
\par First, we introduce additive noise to \cref{model:heteroscedastic}: 
\begin{model}[Additive noise model]
	\label{model:heteroscedastic-additive}
	We say that a mixture variable $ \bY \sim$~\cref{model:heteroscedastic} is observed in the presence of independent additive noise $\bxi \indep \bX_h$, if 
	\begin{align}
		\widetilde{\bY} = \bY+ \bxi.
	\end{align}
	$\widetilde{\bY}$ has the following data generating process:
	\begin{equation}\label{eq:data-gen-heteroscedastic-additive-noise}
		\widetilde{\by}_n = \by_n + \bxi_n  = {\bmu}_{h} + \bm{\Xi}_{n} + \bxi_n,  \mkern9mu \bm{\Xi}_{n} \iid \cN(\bm{0},\Sigma_{h}), \bxi_n \iid \cD_{\bxi}.
	\end{equation}
\end{model}
In the scope of this paper, we will assume that the additive noise distribution is known a priori and that the additive noise is Gaussian-distributed: $\cD_{\bxi} = \cN(\bm{0},\Sigma_{\bxi})$, where the noise covariance $\Sigma_{\bxi})$ is known and symmetric positive semidefinite.

\par  Then, we introduce strong $\epsilon$-contamination to \cref{model:heteroscedastic-additive}:

\begin{model}[Strong contamination model] 
	\label{model:heteroscedastic-strong-contamination}
	Given a parameter $0 \leq \epsilon < \frac{1}{3}$, we say that an additive-noise mixture variable defined in \cref{model:heteroscedastic-additive} is observed in the presence of \emph{\textbf{strong $\epsilon$-contamination}} if there is an adversary that inspects the sample \text{i.i.d.} $\{\widetilde{\by}_n\}_{n=1}^N \iid$ \cref{model:heteroscedastic-additive} and corrupts up to $\epsilon N$ number of points by replacing them by arbitrary points in $\R^d$. The data generating process is the following:
	\begin{align}
		\check{\by}_n = \begin{cases*}
			\widetilde{\by}_n, & (if $n \in \cI_{\inlier}$)\\
			\ba_n \in \R^d, & (if $n \in  \cI_{\outlier}$)
		\end{cases*},
	\end{align}
	with a partition $\{1, \dots, N\} = \cI_{\inlier} \sqcup \cI_{\outlier}, |\cI_{\outlier}| \leq \epsilon N$.
\end{model}

\subsection{Robust DGMM}
We first compute the objective function and gradients required for the robust DGMM estimation. We will refer to $\check{\phi}^{(k)} (\btheta; \bxi)$ as the \textbf{``model term''} and the Bell polynomials of the type in \cref{eq:alpha-recurrence} as the ``model-term Bell polynomials,'' to highlight that their evaluation only involve model parameters. Similarly, we will refer to $\check{\psi}^{(k)} (\btheta; \bxi,  \check{\by}_{n})$ as the \textbf{``per-observation cross term''} and the Bell polynomials of the type in \cref{eq:beta-recurrence} as the ``cross-term Bell polynomials,'' since their evaluation requires both the model parameters and the sample data. Note that $\widehat{o}^{[t]}_k$ and $\frac{1}{N^2} \sum_{n=1}^N \sum_{n^{\prime}=1}^N C_{k,n,n^\prime}$ are both constants that remain unchanged during the L-BFGS optimization. 

\subsubsection{Model term}
Write $\btheta=(\pi_1,\ldots,\pi_K; \mu_1,\ldots, \mu_K; V_1, \ldots, V_K)$, where $\pi_j\in\R$, $\mu_j\in\R^d$, and $V_j\in\R^{d\times R_j}$.
Define the order-$k$ model term:
\begin{align}\label{eq:model-term-def}
	\check{\phi}^{(k)} (\btheta; \bxi) &= \sum_{i=1}^K \sum_{j=1}^K \pi_i \pi_j B_{k}\left(	\left(\check{\kappa}^{(1)}_{ij}\right)  , \dots, 	\left(\check{\kappa}^{(k)}_{ij} \right)  \right),\\
	\check{\kappa}^{(\ell)}_{ij}  &=
	\begin{cases*}\label{eq:cumulants}
		\left\langle{\bmu}_j, {\bmu}_i \right\rangle, & ($l = 1$) \\
		(l-1)! \Tr\left[\left(\left(V_i {V_i}^\top + \Sigma_{\bxi}\right) \left(V_j {V_j}^\top + \Sigma_{\bxi}\right)\right)^{\frac{l}{2}}\right]\\ \quad + \frac{l!}{2}  {{\bmu}_i}^\top \left(V_j {V_j}^\top + \Sigma_{\bxi}\right)  \left(\left(V_i {V_i}^\top + \Sigma_{\bxi}\right) \left(V_j {V_j}^\top + \Sigma_{\bxi}\right)\right)^{\frac{l-2}{2}} {\bmu}_i \\
		\quad + \frac{l!}{2} {{\bmu}_j}^\top \left(\left(V_i {V_i}^\top + \Sigma_{\bxi}\right) \left(V_j {V_j}^\top + \Sigma_{\bxi}\right)\right)^{\frac{l-2}{2}}\left(V_i {V_i}^\top + \Sigma_{\bxi}\right) {\bmu}_j, & ($l$ is even) \\
		l! {{\bmu}_j}^\top \left(\left(V_i {V_i}^\top + \Sigma_{\bxi}\right) \left(V_j {V_j}^\top + \Sigma_{\bxi}\right)\right)^{\frac{l-1}{2}} {\bmu}_i, & ($l$ is odd) \\
	\end{cases*}
\end{align}
where the model-term Bell polynomials respect the following recurrence relation
\begin{align}
	\label{eq:alpha-recurrence}
	\begin{cases*}
		B_{0}\left( \check{\kappa}^{(1)}_{ij}, \dots, \check{\kappa}^{(k)}_{ij} \right) = 1, & (base case)\\
		B_{k}\left( \check{\kappa}^{(1)}_{ij}, \dots, \check{\kappa}^{(k)}_{ij} \right) = \sum_{\ell=0}^{k-1} {k-1 \choose \ell}B_{k-\ell-1}\left(\check{\kappa}^{(1)}_{ij}, \dots, \check{\kappa}^{(k-\ell-1)}_{ij}\right) \check{\kappa}^{(\ell+1)}_{ij}. & (induction step)
	\end{cases*}
\end{align}

\subsubsection{Per-observation cross term}
Fix an observation $\check{\by}_n\in\R^d$ and define the order-$k$ per-observation cross term:
\begin{equation}\label{eq:cross-term-def}
	\check{\psi}^{(k)}(\btheta; \bxi, \check{\by}_n) =  \sum_{j=1}^{K} \pi_j 
	B_{k}\!\Big(
	{\check{\by}_n}^{\top}\bmu_j,\;
	{\check{\by}_n}^{\top}\left(V_jV_j^\top  + \Sigma_{\bxi} \right){\check{\by}_n},\;
	0,\ldots,0
	\Big), 
\end{equation}
where the cross-term Bell polynomials respect the following recurrence relation
\begin{align}
	\label{eq:beta-recurrence}
	\begin{cases*}
		B_{0}\left({{\check{\by}_n}}^\top {\bmu}_j, {{\check{\by}_n}}^\top \left(V_jV_j^\top  + \Sigma_{\bxi} \right) {\check{\by}_n}, 0, \dots, 0\right) = 1, & (base case)\\
		B_{1}\left({{\check{\by}_n}}^\top {\bmu}_j, {{\check{\by}_n}}^\top \left(V_jV_j^\top  + \Sigma_{\bxi} \right) {\check{\by}_n}, 0, \dots, 0\right) = {{\check{\by}_n}}^\top {\bmu}_j, & (base case) \\
		B_{k}\left({{\check{\by}_n}}^\top {\bmu}_j, {{\check{\by}_n}}^\top \left(V_jV_j^\top  + \Sigma_{\bxi} \right) {\check{\by}_n}, 0, \dots, 0\right) =  \\
		\quad \quad \quad \quad \quad \mkern9mu B_{k-1}\left({{\check{\by}_n}}^\top {\bmu}_j, {{\check{\by}_n}}^\top \left(V_jV_j^\top  + \Sigma_{\bxi} \right) {\check{\by}_n}, 0, \dots, 0\right)  {{\check{\by}_n}}^\top {\bmu}_j \\
		\quad \quad \quad \quad \quad \mkern9mu + (k-1) B_{k-2}\left({{\check{\by}_n}}^\top {\bmu}_j, {{\check{\by}_n}}^\top \left(V_jV_j^\top  + \Sigma_{\bxi} \right) {\check{\by}_n}, 0, \dots, 0\right) {{\check{\by}_n}}^\top \left(V_jV_j^\top  + \Sigma_{\bxi} \right) {\check{\by}_n}. & (induction step)
	\end{cases*}
\end{align}

\subsubsection{Robust DGMM objective function}
After computing the model term and the cross term, we then get the robust DGMM objective evaluated at the observed points $\{\check{\by}_n\}_{n=1}^N$ from \cref{model:heteroscedastic-strong-contamination} by substituting the noisy, $\epsilon$-contaminated model terms $\check{\phi}^{(k)}(\btheta; \bxi)$ and per-observation cross terms $\check{\psi}^{(k)}(\btheta; \bxi, \check{\by}_n)$ into the DGMM objective at $t$-th GMM estimation step \cite[Eq.~(31)]{zhang2025diagonally}:
\begin{align}
	\label{eq:dgmm-objective}
	Q^{[t]}_N(\btheta)  &= \sum_{k=1}^L \widehat{o}^{[t]}_k  \left(\check{\phi}^{(k)} (\btheta; \bxi) -2 \sum_{n=1}^N \widehat{w}^{(k)}_n \check{\psi}^{(k)}(\btheta; \bxi, \check{\by}_n)+ \sum_{n=1}^N \sum_{n^{\prime}=1}^N \widehat{w}^{(k)}_n \widehat{w}^{(k)}_{n^{\prime}}  C_{k,n,n^\prime}  \right),
\end{align} 
and the following quantities that are frozen during the optimization: 
\begin{enumerate}
	\item the pre-computed moment sum $ \sum_{n^{\prime}=1}^N C_{k,n,n^\prime}$ and $C_{k,n,n}$, where $C_{k,n,n^\prime} = \inner{{\check{\by}_n}}{\check{\by}_{n^\prime}}^k$, 
	\item the weight vector on the per-observation gradients  $\widehat{\bw}^{(k)}$ $\in \Delta_{N,\epsilon}$ obtained from \cref{alg:spectral-reweighting},
	\item the robust order-specific DGMM weights:
	\begin{align*}
		\widehat{o}^{[t]}_k 
		&= \frac{  \sum_{n} \left(\widehat{w}^{(k)}_n\right)^2 \left(\check{\phi}^{(k)}(\btheta; \bxi) - 2  \check{\psi}^{(k)}(\btheta; \bxi, \check{\by}_n) +  C_{k, n, n}\right)}{\begin{matrix}\sum_{n, n^\prime} \sum_{k^\prime} 
				\widehat{w}^{(k)}_n \widehat{w}^{(k)}_{n^{\prime}}
				\widehat{w}^{(k^\prime)}_{n}   
				\widehat{w}^{(k^\prime)}_{n^{\prime}}  
				\left(\check{\phi}^{(k)}(\btheta; \bxi)  - \check{\psi}^{(k)}(\btheta; \bxi, \check{\by}_n) - \check{\psi}^{(k)}(\btheta; \bxi, \check{\by}_{n^\prime})+   C_{k,n,n^\prime}\right)&\\	\quad \quad \quad\quad\quad \quad \left( \check{\phi}^{(k^\prime)}(\btheta; \bxi)  - \check{\psi}^{(k^\prime)}(\btheta; \bxi, \check{\by}_n)- \check{\psi}^{(k^\prime)}(\btheta; \bxi, \check{\by}_{n^\prime}) + C_{k^\prime, n, n^\prime}\right) \end{matrix}}.\numberthis
	\end{align*}
\end{enumerate}

\subsubsection{Gradients of \texorpdfstring{$\check{\phi}^{(k)}(\btheta; \bxi)$}{} and \texorpdfstring{$\check{\psi}^{(k)}(\btheta; \bxi, \check{\by}_n)$}{}}

First, recall the following fact. For $k\in\mathbb{N}$, let $B_k:\R^k\to\R$ denote the (exponential) Bell polynomial. For brevity, denote $\bkappa=(\kappa^{(1)},\dots,\kappa^{(k)})\in\R^k$. We get
\begin{align}
	\frac{\partial B_k(\bkappa)}{\partial \kappa^{(\ell)}}&=\binom{k}{\ell}\,B_{k-\ell}(\bkappa),\qquad \ell\in[k],\label{eq:bell-first}\\
	\frac{\partial^2 B_k(\bkappa)}{\partial \kappa^{(\ell_1)}\,\partial \kappa^{(\ell_2)}}&=\binom{k}{\ell_1}\binom{k-\ell_1}{\ell_2}\,B_{k-\ell_1-\ell_2}(\bkappa),\qquad \ell_1,\ell_2\in[k],\label{eq:bell-second}
\end{align}
and $\frac{\partial B_k}{\partial \bkappa^{(\ell)}}\equiv 0$ when $\ell>k$.
\begin{align*} 
	\nabla_{\pi_j} \check{\phi}^{(k)}(\btheta; \bxi) &= 2 \sum_{i=1}^K \pi_i B_{k} \left(\check{\kappa}^{(1)}_{ij},\dots, 
	\check{\kappa}^{(k)}_{ij}\right),\numberthis\\
	\nabla_{{\bmu}_j}\check{\phi}^{(k)}(\btheta; \bxi) &= 2 \sum_{i=1}^K   \pi_i \pi_j \sum_{l=1}^k \binom{k}{l} B_{k-l} \left(\check{\kappa}^{(1)}_{ij},\dots, 
	\check{\kappa}^{(k-l)}_{ij}\right) \nabla_{{\bmu}_j} \check{\kappa}^{(l)}_{ij},\numberthis\\
	\nabla_{V_j}  \check{\phi}^{(k)}(\btheta; \bxi) &= 2 \sum_{i=1}^K \pi_i \pi_j \sum_{l=1}^k \binom{k}{l} B_{k-l} \left(\check{\kappa}^{(1)}_{ij},\dots, 
	\check{\kappa}^{(k-l)}_{ij}\right) \nabla_{V_j} \check{\kappa}^{(l)}_{ij}.\numberthis
\end{align*}
From the cumulants in \cref{eq:cumulants}, we get the gradients of the cumulants w.r.t.~${\bmu}_j$ and $V_j$:
\begin{equation}
	\label{eq:cumulants-gradients-mu}
	\nabla_{{\bmu}_j} \check{\kappa}^{(l)}_{ij} = 
	\begin{cases*}
		{\bmu}_i , & ($l = 1$) \\
		l! \left(\left(V_i {V_i}^\top + \Sigma_{\bxi}\right)  \left(V_j {V_j}^\top + \Sigma_{\bxi}\right) \right)^{\frac{l-2}{2}} \left(V_i {V_i}^\top + \Sigma_{\bxi}\right)  {\bmu}_j, & ($l$ is even) \\
		l! \left(\left(V_i {V_i}^\top + \Sigma_{\bxi}\right)  \left(V_j {V_j}^\top + \Sigma_{\bxi}\right) \right)^{\frac{l-1}{2}} {\bmu}_i, & ($l$ is odd) \\
	\end{cases*}
\end{equation}

{\scriptsize
	\begin{align*}
		\label{eq:cumulants-gradients-v}
		\nabla_{V_j} \check{\kappa}^{(l)}_{ij} = &\begin{cases*}
			0 , & ($l = 1$) \\
			l! \left(\left(V_i {V_i}^\top + \Sigma_{\bxi}\right)  \left(V_j {V_j}^\top + \Sigma_{\bxi}\right) \right)^{\frac{l-2}{2}} \left(V_i {V_i}^\top + \Sigma_{\bxi}\right)  V_j \\
			\quad + l! \sum_{p=0}^{\frac{l-2}{2}-1} \left(\left(V_i {V_i}^\top + \Sigma_{\bxi}\right)  \left(V_j {V_j}^\top + \Sigma_{\bxi}\right) \right)^p {\bmu}_i {{\bmu}_i}^\top \left(\left(V_j {V_j}^\top + \Sigma_{\bxi}\right)  \left(V_i {V_i}^\top + \Sigma_{\bxi}\right) \right)^{\frac{l-2}{2} - p} V_j \\
			\quad +  l! \sum_{p=0}^{\frac{l-2}{2}-1} \left(V_i {V_i}^\top + \Sigma_{\bxi}\right)  \left(\left(V_j {V_j}^\top + \Sigma_{\bxi}\right)  \left(V_i {V_i}^\top + \Sigma_{\bxi}\right) \right)^p {\bmu}_j {{\bmu}_j}^\top \left(\left(V_i {V_i}^\top + \Sigma_{\bxi}\right)  \left(V_j {V_j}^\top + \Sigma_{\bxi}\right) \right)^{\frac{l-2}{2}-1 - p} \left(V_i {V_i}^\top + \Sigma_{\bxi}\right)  V_j, & ($l$ is even) \\
			l! \sum_{p=0}^{\frac{l-1}{2}-1} \left(V_i {V_i}^\top + \Sigma_{\bxi}\right)  \left(\left(V_j {V_j}^\top + \Sigma_{\bxi}\right)  \left(V_i {V_i}^\top + \Sigma_{\bxi}\right) \right)^p {\bmu}_j {{\bmu}_i}^\top \left(\left(V_j {V_j}^\top + \Sigma_{\bxi}\right)  \left(V_i {V_i}^\top + \Sigma_{\bxi}\right) \right)^{\frac{l-1}{2}-1-p} V_j \\
			\quad + l!  \sum_{p=0}^{\frac{l-1}{2}-1}  \left(\left(V_i {V_i}^\top + \Sigma_{\bxi}\right)  \left(V_j {V_j}^\top + \Sigma_{\bxi}\right) \right)^p {\bmu}_i {{\bmu}_j}^\top \left(\left(V_i {V_i}^\top + \Sigma_{\bxi}\right)  \left(V_j {V_j}^\top + \Sigma_{\bxi}\right) \right)^{\frac{l-1}{2}-1-p} \left(V_i {V_i}^\top + \Sigma_{\bxi}\right)  V_j. & ($l$ is odd)
		\end{cases*}\numberthis
	\end{align*}
}

For $h\in[K]$:
\begin{align*}
	\nabla_{\pi_h}\kappa^{(\ell)}_j(\check{\by}_n)&=0\qquad \text{(all $\ell$)},\\
	\nabla_{\mu_h}\kappa^{(1)}_j(\check{\by}_n)&= \1_{\{h = j\}} \check{\by}_n,\quad 
	\nabla_{\mu_h}\kappa^{(2)}_j(\check{\by}_n)=0,\quad 
	\nabla_{\mu_h}\kappa^{(\ell)}_j(\check{\by}_n)=0\ \ (\ell\ge 3),\\
	\nabla_{V_h}\kappa^{(1)}_j(\check{\by}_n)&=0,\qquad 
	\nabla_{V_h}\kappa^{(2)}_j(\check{\by}_n)=\1_{\{h = j\}}\,2\,\check{\by}_n\check{\by}_n^\top V_h,\qquad 
	\nabla_{V_h}\kappa^{(\ell)}_j(\check{\by}_n)=0\ \ (\ell\ge 3).
\end{align*}
Note that only $h=j$ contributes, only $(1)$ depends on $\mu_h$ and only $(2)$ depends on $V_h$. Higher cumulants vanish for Gaussian random varaibles. Using \cref{eq:bell-first} and the chain rule on \cref{eq:cross-term-def}:
\begin{align*}
	\nabla_{\pi_h}\check{\psi}^{(k)}(\btheta; \bxi, \check{\by}_n)
	&=\sum_{j=1}^K \nabla_{\pi_h} \left( \pi_j
	B_{k}\!\Big(
	{\check{\by}_n}^{\top}\bmu_j,\;
	{\check{\by}_n}^{\top}\left(V_jV_j^\top + \Sigma_{\bxi}\right){\check{\by}_n},\;
	0,\ldots,0
	\Big) \right)\\
	&=
	B_{k}\!\Big(
	{\check{\by}_n}^{\top}\bmu_h,\;
	{\check{\by}_n}^{\top}\left(V_h V_h^\top + \Sigma_{\bxi}\right){\check{\by}_n},\;
	0,\ldots,0
	\Big)  \text{(only $h=j$ survives; $B_k$ independent of $\pi_h$)}\label{eq:cross-term-first-derivatives-pi}\numberthis
\end{align*}

\begin{align*}
	\nabla_{\mu_h}\check{\psi}^{(k)}(\btheta; \bxi, \check{\by}_n) &=\sum_{j=1}^K \pi_j \sum_{m=1}^k \frac{\partial 
		B_{k}\!\Big(
		{\check{\by}_n}^{\top}\bmu_j,\;
		{\check{\by}_n}^{\top}\left(V_jV_j^\top + \Sigma_{\bxi}\right){\check{\by}_n},\;
		0,\ldots,0
		\Big) }{\partial \kappa^{(m)}_j(\check{\by}_n)}\,\nabla_{\mu_h}\kappa^{(\ell)}_j(\check{\by}_n)\\
	&= \pi_h \binom{k}{1}  B_{k-1}\!\Big(
	{\check{\by}_n}^{\top}\bmu_h,\;
	{\check{\by}_n}^{\top}\left(V_hV_h^\top + \Sigma_{\bxi}\right){\check{\by}_n},\;
	0,\ldots,0
	\Big) \,\check{\by}_n,\label{eq:cross-term-first-derivatives-mu}\numberthis
\end{align*}

\begin{align*}
	\nabla_{\vectorize{V_h}}\check{\psi}^{(k)}(\btheta; \bxi, \check{\by}_n)&=\sum_{j=1}^K \pi_j \sum_{m=1}^k \frac{\partial 
		B_{k}\!\Big(
		{\check{\by}_n}^{\top}\bmu_j,\;
		{\check{\by}_n}^{\top}\left(V_jV_j^\top + \Sigma_{\bxi}\right){\check{\by}_n},\;
		0,\ldots,0
		\Big)  }{\partial \kappa^{(m)}_j(\check{\by}_n)}\,\nabla_{V_h}\kappa^{(\ell)}_j(\check{\by}_n)\\
	&= \pi_h \binom{k}{2}  B_{k-2}\!\Big(
	{\check{\by}_n}^{\top}\bmu_j,\;
	{\check{\by}_n}^{\top}\left(V_jV_j^\top + \Sigma_{\bxi}\right){\check{\by}_n},\;
	0,\ldots,0
	\Big) \,\left(2\, \vectorize\left(\check{\by}_n\check{\by}_n^\top V_h\right)\right)\\
	&= \pi_h k (k-1)  \vectorize\left(\check{\by}_n\check{\by}_n^\top V_h\right) B_{k-2}\!\Big(
	{\check{\by}_n}^{\top}\bmu_j,\;
	{\check{\by}_n}^{\top}\left(V_jV_j^\top + \Sigma_{\bxi}\right){\check{\by}_n},\;
	0,\ldots,0
	\Big) \label{eq:cross-term-first-derivatives-V}\numberthis
\end{align*}
In \cref{eq:cross-term-first-derivatives-mu}, among $\kappa_h^{(m)}, m = 1,\dots, k$, only $\kappa_h^{(1)}$ depends on $\mu_h$; in \cref{eq:cross-term-first-derivatives-V}, among $\kappa_h^{(m)}, m = 1,\dots, k$, only $\kappa_h^{(2)}$ depends on $V_h$. Concatenating these gradients, we get 
\begin{align*}
	\label{eq:adv-gradient}
	\check{\bg}_n^{(k)} \coloneq  \nabla_{\btheta} \check{\psi}^{(k)}(\btheta; \bxi, \check{\by}_n)  = \big[& \nabla_{\pi_1} \check{\psi}^{(k)}(\btheta; \bxi, \check{\by}_n); \dots; \nabla_{\pi_K} \check{\psi}^{(k)}(\btheta; \bxi, \check{\by}_n);   \nabla_{\bmu_1} \check{\psi}^{(k)}(\btheta; \bxi, \check{\by}_n); \dots; \nabla_{\bmu_K} \check{\psi}^{(k)}(\btheta; \bxi, \check{\by}_n);   \\
	&\vectorize\left( \nabla_{V_1} \check{\psi}^{(k)}(\btheta; \bxi, \check{\by}_n)\right) ; \dots;  \vectorize\left( \nabla_{V_K} \check{\psi}^{(k)}(\btheta; \bxi, \check{\by}_n)\right) \big]^{\top} \in \R^p. \numberthis
\end{align*}

\begin{algorithm}[H]
	\renewcommand{\thealgocf}{RobustDGMM}
	\begingroup
	\scriptsize
	\caption{Robust DGMM estimation for Gaussian mixture modeling.}	\label{alg:robust-dgmm}
	\SetKwInput{KwInput}{Input} 
	\SetKwInput{KwOutput}{Output}
	\KwInput{\begin{itemize}
			\item noisy, $\epsilon$-contaminated observations $\{\check{\by}_n\}_{n=1}^N \sim$ \cref{model:heteroscedastic-strong-contamination},
			\item hyperparameters: the maximum moment order $L$, the maximum covariance rank $R_{\max}$, the number of mixture components $K$, Gaussian additive noise covariance $\Sigma_{\bxi}$, the maximum DGMM steps $T_{\dgmm}$, the maximum L-BFGS iterations $I_{\lbfgs}$, contamination fraction $\epsilon \in (0, 1/3)$, the MW-MMW step sizes $0<\eta_{\rho}, \eta_{w} \leq 1/2$, the inner iterations $T$, threshold constant $C>0$, target accuracy $\delta > 0$, reweighting interval $I_{\interval}$.
	\end{itemize}}
	\KwOutput{estimated parameters  $\widehat{\btheta} \coloneq \left[\widehat{\pi}_1; \dots; \widehat{\pi}_K; \widehat{\bmu}_1; \dots; \widehat{\bmu}_K; \vectorize(\widehat{V}_1); \dots; \vectorize(\widehat{V}_K)\right]^{\top} \in \Theta \subset \R^{p}.$}
	\DontPrintSemicolon
	
	Initialize $\btheta^{[0]}$ as per \cite{zhang2025diagonally}: $\pi^{[0]}_j = \frac{1}{K}$, ${\bmu}^{[0]}_j \sim \mathrm{Unif}\left(\{x \in \R^d : \norm{x}_2 = 1\}\right)$, $\Sigma^{[0]}_j = U^{[0]}_j {U^{[0]}_j}^T$, where $U^{[0]}_j$ is a $d\times R_{\max}$ random orthonormal matrix. \;
	
	Pre-compute the moment sum $ \sum_{n^{\prime}=1}^N C_{k,n,n^\prime}$ and $C_{k,n,n}$, where $C_{k,n,n^\prime} = \inner{{\check{\by}_n}}{\check{\by}_{n^\prime}}^k$.\;

	For each moment order $k$, compute model terms and cross terms at initialization: 

	\begin{align*}
		\check{\phi}^{(k)} (\btheta^{[0]}; \bxi) = \sum_{i=1}^K \sum_{j=1}^K \pi_i^{[0]} \pi_j^{[0]} B_{k}\left(	\left(\check{\kappa}^{(1)}_{ij}\right)^{[0]}  , \dots, 	\left(\check{\kappa}^{(k)}_{ij} \right)^{[0]}  \right),\mkern9mu
		\check{\psi}^{(k)}(\btheta^{[0]}; \bxi, \check{\by}_n) =   \sum_{j=1}^{K} \pi_j^{[0]} 
		B_{k}\!\Big(
		{\check{\by}_n}^{\top}\bmu_j^{[0]} ,\;
		{\check{\by}_n}^{\top}\left(V_j^{[0]}  {V_j^{[0]} }^\top + \Sigma_{\bxi}\right){\check{\by}_n},\;
		0,\ldots,0
		\Big).
	\end{align*}\label{alg:robust-dgmm-line-init} \;
	
	\For{$t=1,\dots,T_{\dgmm}$ or until DGMM steps converge}{
		Run the softmax reparameterized unconstrained moment-matching optimization via L-BFGS as per \cite{zhang2025diagonally}: 
		\For{$i = 1, \dots, I_{\lbfgs}$ or until L-BFGS iterations converge}{
			For each moment order $k$, evaluate the noisy, $\epsilon$-contaminated per-observation cross terms $	\check{\psi}^{(k)}(\btheta; \bxi, \check{\by}_n)$ at the noisy, $\epsilon$-contaminated observations $\check{\by}_n \sim$ \cref{model:heteroscedastic-strong-contamination}, using the current parameter estimates $   \btheta \coloneq\left[\pi_1; \dots; \pi_K; {\bmu}_1; \dots; {\bmu}_K; \vectorize(V_1); \dots; \vectorize(V_K)\right]^T \in \Theta \subset \R^{p}$:
			\begin{align*}
				\check{\phi}^{(k)} (\btheta; \bxi) = \sum_{i=1}^K \sum_{j=1}^K \pi_i \pi_j B_{k}\left(	\check{\kappa}^{(1)}_{ij}  , \dots, 	\check{\kappa}^{(k)}_{ij}  \right), \mkern9mu
				\check{\psi}^{(k)}(\btheta; \bxi, \check{\by}_n) =  \sum_{j=1}^{K} \pi_j 
				B_{k}\!\Big(
				{\check{\by}_n}^{\top}\bmu_j ,\;
				{\check{\by}_n}^{\top}\left(V_j  {V_j}^\top + \Sigma_{\bxi}\right){\check{\by}_n},\;
				0,\ldots,0
				\Big).
			\end{align*}	\label{alg:robust-dgmm-line-opt} \;
			
			For each moment order $k$, compute $\check{\bg}_n^{(k)} \coloneq \nabla_{\btheta} \check{\psi}^{(k)}(\btheta; \bxi, \check{\by}_n)$, given in \cref{eq:cross-term-first-derivatives-pi}, \cref{eq:cross-term-first-derivatives-mu}, \cref{eq:cross-term-first-derivatives-V}.\;
			
			\If{$i - i_{\prev} \geq I_{\interval}$ or $i - i_{\prev} \geq I_{\min}$ and L-BFGS is locally stabilized \label{alg:robust-dgmm-line-lbfgs}}{
				Update the weight vector on the per-observation gradients  $\widehat{\bw}^{(k)}$ $\in \Delta_{N,\epsilon}$ for each moment order $k$ via  \cref{alg:spectral-reweighting}. Note that in practice, \cref{alg:spectral-reweighting} should be initialized with the previous weight vector (warm-start). \;
				
				Update the robust order-specific DGMM weights $\{\widehat{o}^{[t]}_k\}_{k\le L}$ (the robust version of $\widehat{w}^{[t]}_k$ in \cite{zhang2025diagonally}) for each moment order $k$:
				\begin{align*}
					\widehat{o}^{[t]}_k 
					&= \frac{  \sum_{n} \left(\widehat{w}^{(k)}_n\right)^2 \left(\check{\phi}^{(k)}(\btheta; \bxi) - 2  \check{\psi}^{(k)}(\btheta; \bxi, \check{\by}_n) +  C_{k, n, n}\right)}{\begin{matrix}\sum_{n, n^\prime} \sum_{k^\prime} \widehat{w}^{(k)}_n 
							\widehat{w}^{(k)}_{n^{\prime}}
							\widehat{w}^{(k^\prime)}_{n}   
							\widehat{w}^{(k^\prime)}_{n^{\prime}}  
							\left(\check{\phi}^{(k)}(\btheta; \bxi)  - \check{\psi}^{(k)}(\btheta; \bxi, \check{\by}_n) - \check{\psi}^{(k)}(\btheta; \bxi, \check{\by}_{n^\prime})+   C_{k,n,n^\prime}\right)&\\	\quad \quad \quad\quad\quad \quad \left( \check{\phi}^{(k^\prime)}(\btheta; \bxi)  - \check{\psi}^{(k^\prime)}(\btheta; \bxi, \check{\by}_n)- \check{\psi}^{(k^\prime)}(\btheta; \bxi, \check{\by}_{n^\prime}) + C_{k^\prime, n, n^\prime}\right) \end{matrix}}.
				\end{align*}\;
				Reset L-BFGS memory and continue.\;
			}
			Freezing $\widehat{\bw}^{(k)}$ and $\widehat{o}^{[t]}_k $, continue the L-BFGS iterations using the robust gradient for the moment-matching optimization:
			\begin{align*}
				\nabla_{\btheta} Q^{[t]}_N(\btheta)  &= \sum_{k=1}^L \widehat{o}^{[t]}_k  \left( \nabla_{\btheta} \check{\phi}^{(k)} (\btheta; \bxi) -2 \sum_{n=1}^N \widehat{w}^{(k)}_n \nabla_{\btheta} \check{\psi}^{(k)}(\btheta; \bxi, \check{\by}_n)  \right).
			\end{align*}
		}
		Use $\widehat{\btheta}^{[t]}$ to initialize the next $(t+1)$-th DGMM estimation step.\;
	}
	\endgroup
\end{algorithm}

\section{Numerical experiments}
\label{sec:numerical-experiments}
The code and data used in this section are available at
\url{https://github.com/liu-lzhang/sgr-gmm}.
\subsection{Numerical experiments for \cref{alg:spectral-reweighting}}
\label{sec:reweight-gradients-numerical-experiments}
\paragraph{Data-generating model.} We first isolate the spectral gradient reweighting primitive \cref{alg:spectral-reweighting} from the nonconvex DGMM optimization problem and test its performance  on synthetic contaminated per-observation gradients $\{\check{\bg}_n\}_{n=1}^N \subset \R^p$, where $p=10$ and $N=600$. We suppress the superscript $(k)$ to keep the notation light. Here we make the following assumptions:
\begin{itemize}
	\item \textbf{Inliers.} The inliers are drawn from a Gaussian model
	\begin{align*}
		\{\check{\bg}_n\}_{n\in \cI_{\inlier}} \sim \mathcal{N}(	\bmu_{\bg} ,	\Sigma_{\bg}  ),
	\end{align*}
	with population mean 
	\begin{align*}
		\bmu_{\bg} = (0.50,-0.50,0.25,0,0.75,0,\dots,0)\in \R^{10},
	\end{align*}
	and diagonal population covariance
	\begin{align*}
		\Sigma_{\bg} = \diag\left( e^{-t_1},\dots,e^{-t_{10}}\right), \mkern9mu t_j = \frac{2(j-1)}9.
	\end{align*}
	Thus $\normop{\Sigma_{\bg}} = 1$ and the eigenvalues decay exponentially. 
	
	\item \textbf{Outliers.} We consider \emph{directional} outliers with default outlier strength is $\sigma=8$: directional contamination along the smallest-eigenvalue direction $v_{\min}$, with outliers drawn from $\cN(\bmu_{\bg}+\sigma v_{\min},0.1I_p)$.
\end{itemize}
All computations use fixed random seeds for reproducibility. Each plotted point averages 50 independent repetitions and reports one empirical standard deviation. 

\paragraph{Metrics.}
Let $\widehat{\bw}$ be the final weight vector returned by \cref{alg:spectral-reweighting} and $\overline{\check{\bg}_{\widehat{\bw}}} \coloneq  \frac{1}{N} \sum_{n=1}^N 
w_n \check{\bg}_n$ be the resulting weighted mean. To measure the performance of \cref{alg:spectral-reweighting}, we compute the following quantities:
\begin{enumerate}
	\item \textbf{Estimation error:} $\err(\overline{\check{\bg}_{\widehat{\bw}}} ) \coloneq \norm{\overline{\check{\bg}_{\widehat{\bw}}} -\bmu_{\bg}}_2$, which measures statistical accuracy.
	\item \textbf{Residual outlier mass:} $\tau_{\mathrm{out}}(\bw) \coloneq	\sum_{n\in \cI_{\mathrm{out}}} w_n,$ which measures whether the reweighting has removed the corrupted mass
	\item \textbf{Fixed-center spectral norm:} $\normop{S(\bw;\{\check{\bg}_n\},\widehat{\bmu}^{[s]})}$ for  current fixed center \(\widehat\mu^{[s]}\) in the outer loop, which is the empirical analogue of the fixed-center spectral certificate in \cref{thm:spectral-norm-after-mw-mmw}.
\end{enumerate}

\subsubsection{Accuracy under increasing contamination.}
\cref{fig:reweight-gradients-contamination} compares the estimation error for the sample mean, coordinatewise median, geometric median, oracle inlier mean and the weighted mean produced by \cref{alg:spectral-reweighting} under increasing contamination fractions $\epsilon \in \{0,0.05,0.10,\dots,0.40\}$. Here, the assumed contamination fraction is set equal to the actual contamination fraction. In \cref{fig:reweight-gradients-contamination}, the weighted mean produced by \cref{alg:spectral-reweighting}  is almost indistinguishable from the oracle inlier mean, even with increasing contamination levels. In fact, the estimation error difference between \cref{alg:spectral-reweighting} and the oracle-inlier error never exceeds $7.24e-05$, and the residual outlier mass never exceeds $0.000217$. This is even stronger than what the current theory has proved. Intuitively, this suggests that the directional outliers are  cleanly separated so that the MW-MMW game can still identify an almost oracle inlier weighting well even beyond the worst-case regime covered by the proof. In contrast, the sample mean behaves as expected: its mean $\ell_2$ error grows almost linearly from $\epsilon=0$ to $\epsilon=0.40$. This is consistent with the heuristic bias law $\|\EX[ \overline{\check{\bg}_n} - \bmu_{\bg}]\|_2 \approx \sigma \epsilon$. The geometric median and the coordinatewise median is much more stable than the sample mean, but the estimation error still grows with increasing contamination levels.

\begin{figure}[H]
	\captionsetup{justification=justified, singlelinecheck=off}
	\centering
	\subfloat{\includegraphics[width=0.75\textwidth]{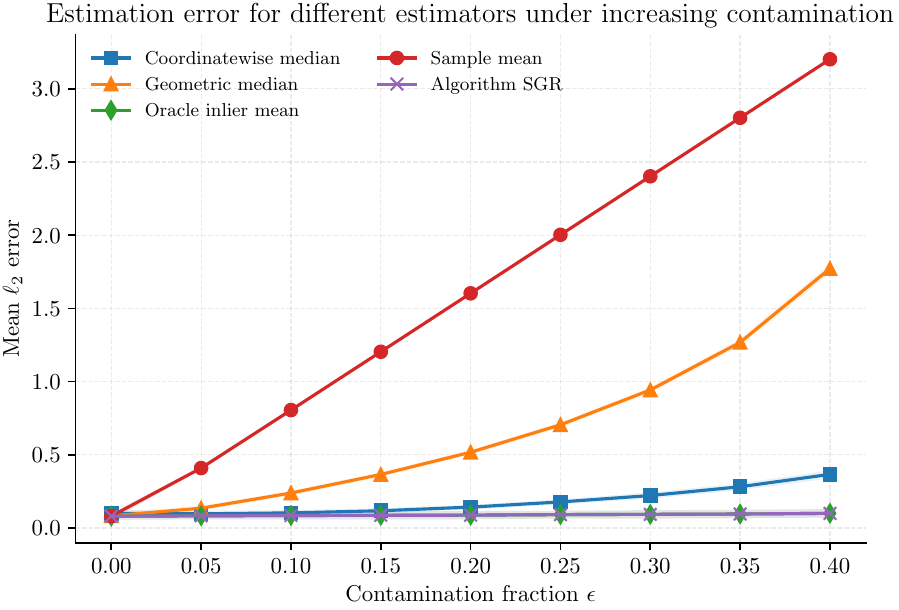}}
	\caption{Mean $\ell_2$ estimation error under increasing contamination in the case of directional outliers.}\label{fig:reweight-gradients-contamination}
\end{figure}

\subsubsection{Progress over outer-loop iterations.}
\cref{fig:reweight-gradients-outer_iterations} shows the full outer-loop history for one representative run with contamination fraction \(\epsilon=0.10\). From \cref{fig:reweight-gradients-outer_iterations}, we have the following observations:
\begin{itemize}
	\item The weighted-mean error drops sharply within four outer iterations, and it then plateaus at $0.091079$. This is aligned with the outer-loop convergence theorem in \cref{thm:outer-loop-convergence}.
	
	\item The weight update $\|w^{[s]}-w^{[s-1]}\|_1$ decreases from $1.49\times 10^{-1}$ to
	$3.23\times10^{-5}$, and the fixed-center update $\|\bmu^{[s+1]}-\bmu^{[s]}\|_2$ decreases from $2.47\times10^{-2}$ to $1.48\times10^{-4}$, both of which suggest that the empirical stabilization rule is detecting a fixed point.
	
	\item The fixed-center spectral norm decreases from \(1.746736\) under uniform weights to \(1.062284\) under the final robust weights, then remains around $1.062$. This suggests that after a few outer iterations the objective has reached the clean inlier scale $\normop{\Sigma_{\bg}}=1$, and subsequent iterations mainly refine the weights and the center.
\end{itemize}
\begin{figure}[H]
	\captionsetup{justification=justified, singlelinecheck=off}
	\centering
	\subfloat{\includegraphics[width=0.4\textwidth]{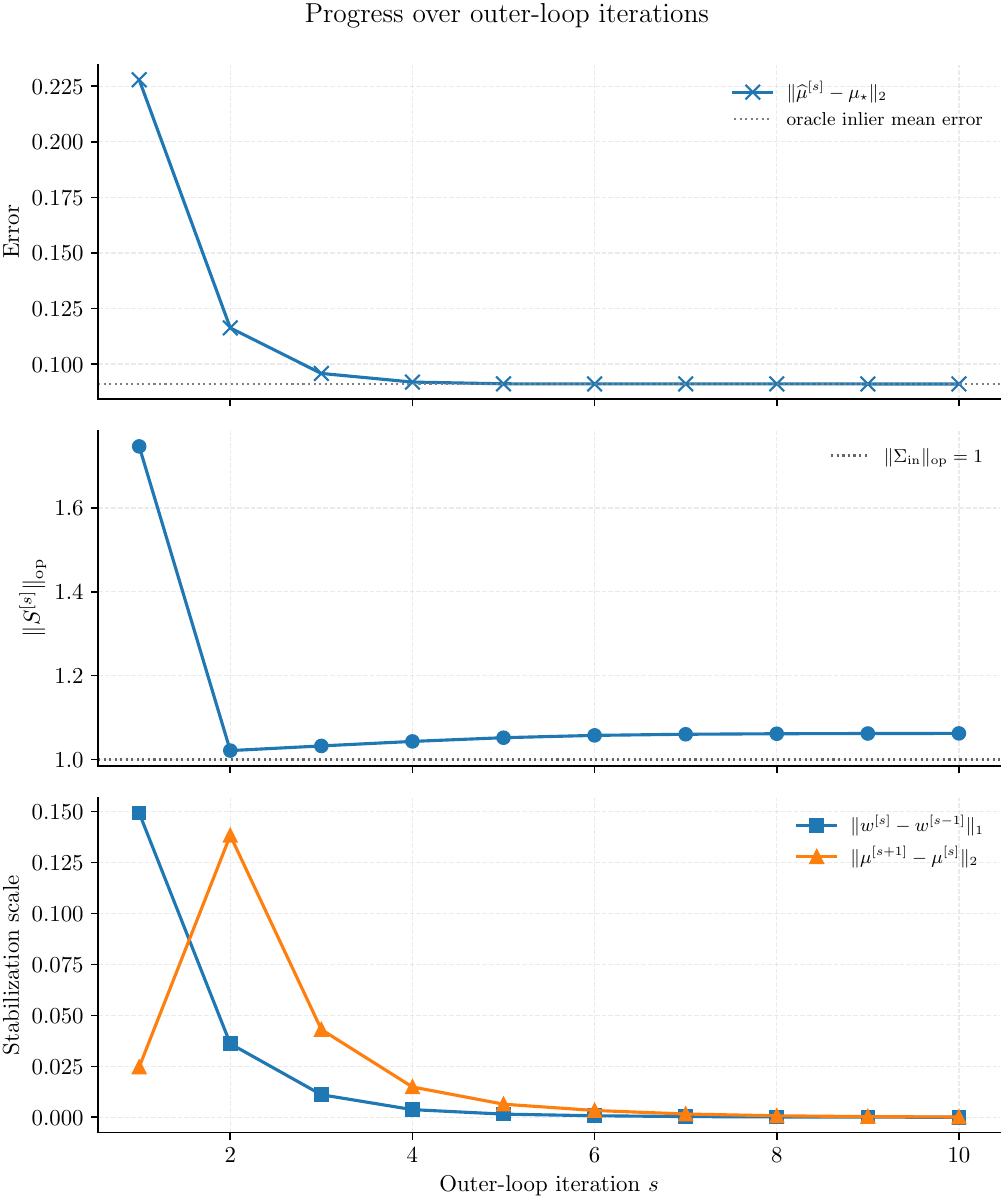}}
	\caption{Progress over outer-loop iterations in the case of directional outliers. The horizontal dotted lines mark the clean covariance scale $\normop{\Sigma_{\mathrm{in}}}=1$ and the oracle inlier-mean error.}\label{fig:reweight-gradients-outer_iterations}
\end{figure}

\subsubsection{Sensitivity to the assumed contamination level.}
\cref{fig:reweight-gradients-epsilon-sensitivity} studies the misspecified contamination level, where the actual contamination fixed at $0.10$ and the assumed contamination varies from
$\epsilon \in \{0.05,0.08,0.10,0.12,0.15,0.20\}$.
\begin{figure}[H]
	\captionsetup{justification=justified, singlelinecheck=off}
	\centering
	\subfloat{\includegraphics[width=\textwidth]{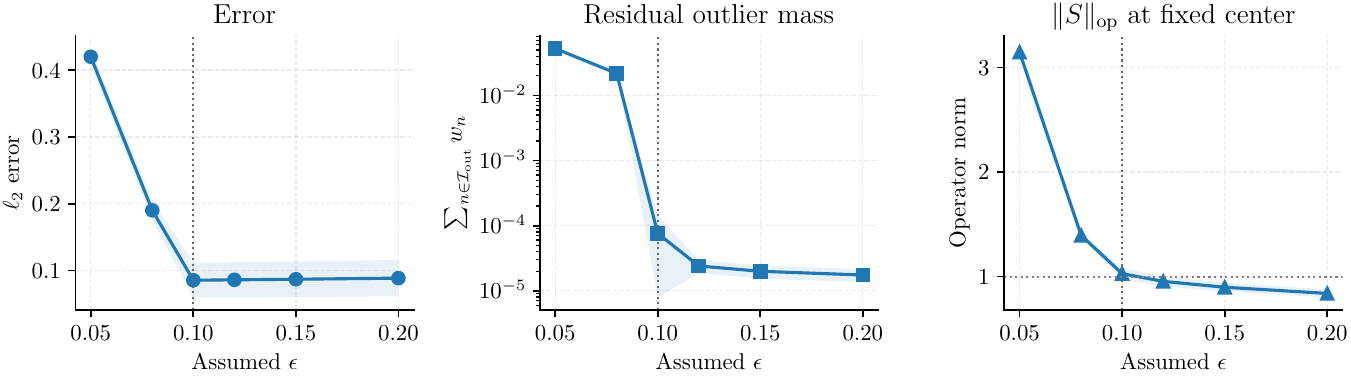}}
	\caption{Sensitivity to the user-supplied contamination level when the actual contamination is $0.10$.  The vertical dotted line marks the correctly specified value.}\label{fig:reweight-gradients-epsilon-sensitivity}
\end{figure}
From \cref{fig:reweight-gradients-epsilon-sensitivity}, we have the following observations:

\begin{itemize}
	\item 	Underestimating \(\epsilon\) is harmful because the capped simplex is not restrictive enough when the assumed contamination is too small:
	at \(\epsilon_{\mathrm{assumed}}=0.05\) the mean error is \(0.419977\) and the residual outlier mass is \(0.052632\).  Once the assumed level reaches the true level, the error returns to the oracle scale: at \(\epsilon_{\mathrm{assumed}}=0.10\), the mean error is \(0.085506\) and the outlier mass is \(7.6\times10^{-5}\).  Mild overestimation is numerically benign in this experiment, although it increases the admissible cap and may reduce efficiency in harder instances.
	\item Mild overestimation is numerically benign in this experiment, although it increases the admissible cap and may reduce efficiency in harder instances. For $\epsilon \in \{0.12,0.15,0.20\}$, the mean error stays in the narrow range $0.086134$-$0.088659$, comparable to the error at \(\epsilon_{\mathrm{assumed}}=0.10\), which is \(0.085506\).
\end{itemize}

\subsection{Numerical experiments for \cref{alg:robust-dgmm}}
\label{sec:robust-dgmm-numerical-experiments}
\paragraph{Data-generating model.}
The numerical experiments in this section compared the following estimation methods:
\begin{enumerate}
	\item \emph{Naive DGMM}: the diagonal DGMM estimator run directly on the observations, ignoring both additive noise and contamination.
	\item \emph{Noise-aware DGMM}: the same moment estimator with the known additive covariance included in the model moments, but without spectral gradient reweighting.
	\item \emph{RobustDGMM}: \cref{alg:robust-dgmm}, which is the DGMM and Gaussian mixture specialization of the main algorithm SGR-GMM \cref{alg:robust-sgr-gmm}, where we use the noise-aware model moments, per-order weights from \cref{alg:spectral-reweighting}, and robust diagonal order weights.
	\item \emph{sklearn EM}: an expectation-maximization likelihood baseline initialized from the same \verb+k-means+++ centers in the outlier-geometry comparison.
\end{enumerate}

Here we use the following data-generating model:
\begin{itemize}
	\item \textbf{Inliers.} The inliers are drawn from \cref{model:heteroscedastic}, following \cite{zhang2025diagonally}, and we set 
	\begin{align*}
		d = 5, K = 2, N = 1000, R_{\min} = 2, R_{\max}^{\mathrm{true}} = 2, R_{\max}^{\mathrm{fit}} =  2, L = 4,
	\end{align*}
	with the centers drawn uniformly at random from the sphere with radius $5$, covariance singular values drawn uniformly at random from the interval $[1, 2]$, and parameter initialization by $k$-means++ to align with sklearn EM for comparison.
	\item \textbf{Additive noise.} Additive noise is isotropic Gaussian with known covariance $
	\Sigma_{\bxi}=0.10\,I_d$.
	\item \textbf{Outliers.} We consider 
	\emph{Gaussian replacement outliers}, where each contaminated observation is replaced by an isotropic Gaussian outlier with standard deviation \verb+outlier_std=4.0+. 
\end{itemize}

\paragraph{Metrics.} The error metrics follow from \cite{zhang2025diagonally}, namely, using the mixture component permutation $\widehat\sigma\in S_K$ that minimizes the average relative covariance error,
\[
\widehat\sigma\in\argmin_{\sigma\in S_K}
\frac1K\sum_{j=1}^K
\frac{\|\widehat\Sigma_j-\Sigma_{\sigma(j)}^{\star}\|_F}{\|\Sigma_{\sigma(j)}^{\star}\|_F},
\]
we compute the average relative errors in the mixing weights, centers, and covariances:
\[
\err_{\pi}=\frac1K\sum_{j=1}^K \frac{|\widehat\pi_j-\pi_{\widehat\sigma(j)}^{\star}|}{|\pi_{\widehat\sigma(j)}^{\star}|},
\quad
\err_{\mu}=\frac1K\sum_{j=1}^K \frac{\|\widehat\mu_j-\mu_{\widehat\sigma(j)}^{\star}\|_2}{\|\mu_{\widehat\sigma(j)}^{\star}\|_2},
\quad
\err_{\Sigma}=\frac1K\sum_{j=1}^K \frac{\|\widehat\Sigma_j-\Sigma_{\widehat\sigma(j)}^{\star}\|_F}{\|\Sigma_{\widehat\sigma(j)}^{\star}\|_F}.
\]

Additionally, we compute the outlier mass  robust diagnostic to measure how aggressively \cref{alg:spectral-reweighting} is suppressing contaminated observations:
\begin{align*}
	\tau_{\outlier} \coloneqq \sum_{n\in\mathcal I_{\outlier}}\widehat{w}_n^{(k)},
	\qquad k=1,\dots,L.
\end{align*}

\subsubsection{Convergence and reweighting diagnostics }
\cref{fig:convergence-diagnostics} records a representative noisy-and-contaminated run and suggests that the numerical implementation behaves as the theory suggests. The parameter-error plots show that the center and covariance errors improve rapidly during the first few reweightings and then stabilize. The mixing weight error likely struggles more due to the softmax reparameterization. The objective, parameter-change, and weight-change plots also show eventual stabilization. This behavior is consistent with the decomposition in the main theorem of the paper in \cref{thm:finite-sample-parameter}: once the robust gradient estimation error is reduced, the remaining error is dominated by local numerical optimization and by the intrinsic finite-sample moment error.

\begin{figure}[H]
	\centering
	\includegraphics[width=0.98\textwidth]{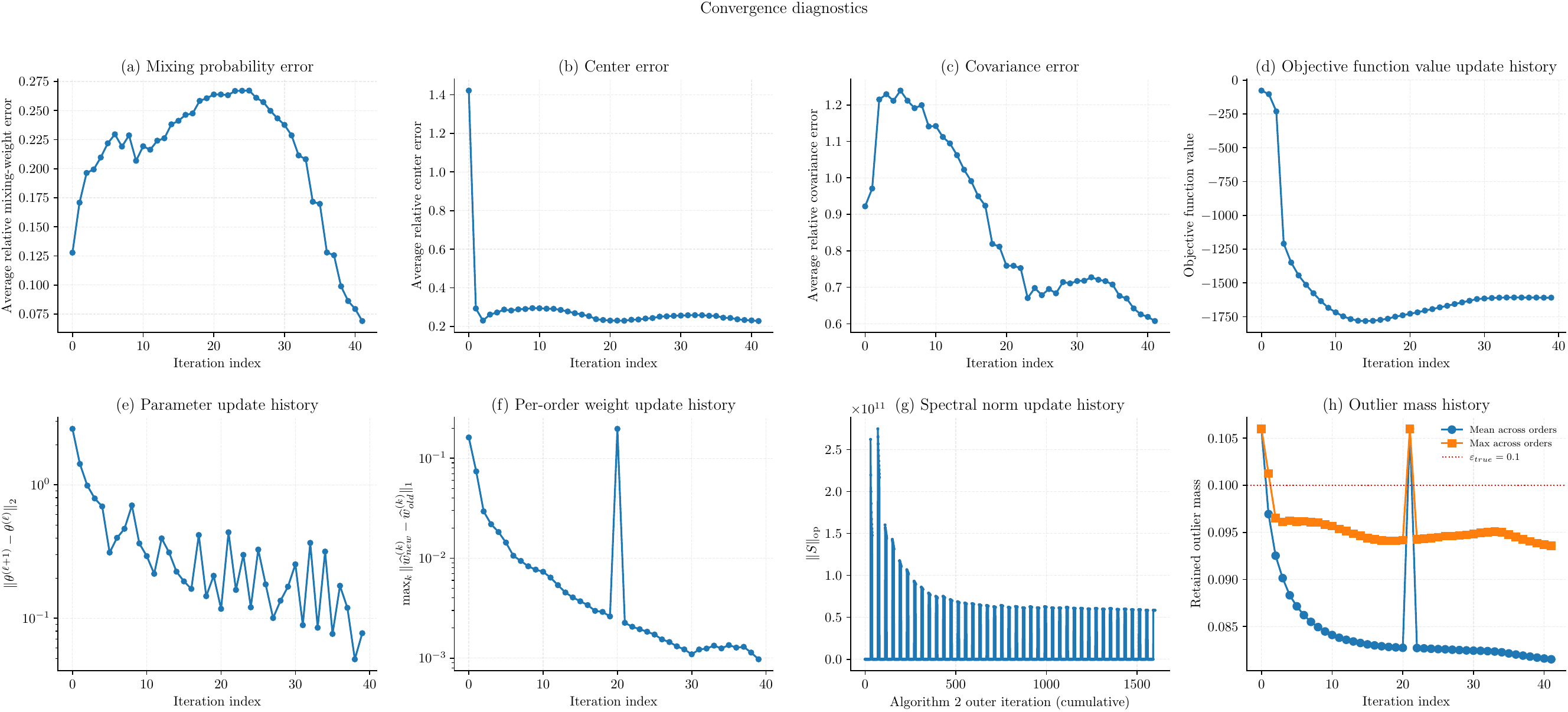}
	\caption{Representative convergence diagnostics for \cref{alg:robust-dgmm} under additive noise and contamination fraction $\epsilon = 0.1$.  The figure records the objective, parameter displacement, robust weight changes, order-weight evolution, per-order retained outlier mass, and final component-level errors.}
	\label{fig:convergence-diagnostics}
\end{figure}

\subsubsection{Repeated-trial statistical validation}
In \cref{tab:repeated-trials-summary} and \cref{fig:repeated-trials}, we report the repeated trials to test \cref{alg:robust-dgmm}. In particular, we observe the following:
\begin{enumerate}
	\item In the \emph{clean} configuration, all three DGMM variants are numerically indistinguishable. This is what one should expect since \cref{alg:robust-dgmm} reduces to the baseline DGMM in the clean configuration.
	\item In the \emph{noise-only} configuration, \cref{alg:robust-dgmm} and the noise-aware estimator coincide. This shows that the robust gradient reweighting does not induce unintended side effects when contamination is absent.
	\item In the \emph{contamination-only} and \emph{noise-plus-contamination} configurations, \cref{alg:robust-dgmm} yields a substantial reduction in errors for  mixing probability estimation, center estimation, and covariance estimation. The trade-off, however, is a longer runtime, due to repeated calls to \cref{alg:spectral-reweighting}.
\end{enumerate}
\begin{table}[!t]
	\centering
	\caption{Repeated-trial summary for the end-to-end DGMM experiment.  The reported standard deviations are across five statistical seeds in the fast-mode notebook.}
	\label{tab:repeated-trials-summary}
	\begingroup
	\small
	\begin{tabular}{llrrrrrrr}
		\toprule
		Configuration & Method & Err\(_\pi\) mean & Err\(_\pi\) std. & Err\(_\mu\) mean & Err\(_\mu\) std. & Err\(_\Sigma\) mean & Err\(_\Sigma\) std. & Runtime mean (s)\\
		\midrule
		Clean & Naive DGMM       & 0.069885 & 0.065303 & 0.047776 & 0.036162 & 0.198338 & 0.134030 & 0.413457\\
		Clean & Noise-aware DGMM & 0.069885 & 0.065303 & 0.047776 & 0.036162 & 0.198338 & 0.134030 & 0.421000\\
		Clean & RobustDGMM       & 0.069885 & 0.065303 & 0.047776 & 0.036162 & 0.198338 & 0.134030 & 0.408343\\
		\midrule
		Contamination only & Naive DGMM       & 1.154906 & 1.828217 & 1.517911 & 1.020011 & 38.457284 & 45.599675 & 0.385756\\
		Contamination only & Noise-aware DGMM & 1.154906 & 1.828217 & 1.517911 & 1.020011 & 38.457284 & 45.599675 & 0.385914\\
		Contamination only & RobustDGMM       & 0.067575 & 0.056400 & 0.523574 & 0.228449 & 2.458773  & 0.884887  & 392.628207\\
		\midrule
		Noise only & Naive DGMM       & 0.352071 & 0.425875 & 0.205168 & 0.115748 & 0.991184 & 0.730812 & 0.427669\\
		Noise only & Noise-aware DGMM & 0.065715 & 0.068061 & 0.077080 & 0.070463 & 0.345049 & 0.320983 & 2.191373\\
		Noise only & RobustDGMM       & 0.065715 & 0.068061 & 0.077080 & 0.070463 & 0.345049 & 0.320983 & 1.527712\\
		\midrule
		Both & Naive DGMM       & 1.687308 & 1.648177 & 1.704079 & 0.730592 & 25.719702 & 23.690259 & 0.401185\\
		Both & Noise-aware DGMM & 1.402508 & 1.716189 & 1.622599 & 0.912302 & 27.490586 & 30.497567 & 1.253636\\
		Both & RobustDGMM       & 0.226619 & 0.295889 & 0.465811 & 0.249497 & 2.177167  & 0.750899  & 388.968583\\
		\bottomrule
	\end{tabular}
	\endgroup
\end{table}

\begin{figure}[H]
	\centering
	\includegraphics[width=0.98\textwidth]{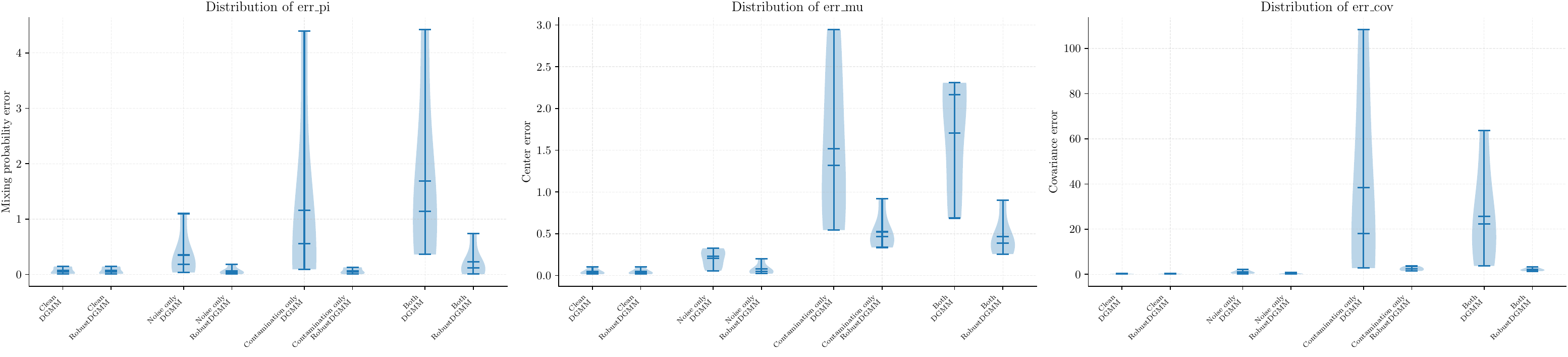}
	\caption{Distribution of repeated-trial DGMM errors in the clean, noise-only, contamination-only, and combined noise-plus-contamination configurations.}
	\label{fig:repeated-trials}
\end{figure}

\subsubsection{Baseline comparisons and the role of outlier geometry}
\cref{fig:comparison-outlier-geometries} compares naive DGMM, noise-aware DGMM, sklearn EM and RobustDGMM using the same  initial parameters (obtained from \verb+k-means+++). For the purpose of comparison, in addition to the \emph{Gaussian replacement outliers}, we additionally consider the more challenging \emph{uniform-box outliers}, where the outliers are drawn from a noisy uniform box $U([4,10]^d)+\cN(0,0.1I_d)$.  The robust moment estimator improves substantially on the two non-robust moment estimators under structured contamination. The comparison with EM depends on the outlier geometry.  Under the more difficult \emph{uniform-box outliers}, RobustDGMM has notably smaller estimation errors in mixing weight, center, and covariance than EM. For the easier \emph{Gaussian replacement outliers}, the gap between the two methods is less decisive. This is likely due to two reasons: first, it is easier for a likelihood fit initialized near the inlier clusters under \emph{Gaussian replacement outliers};  second, the present experiment is low-dimensional with a moderate sample size, which is too small for moment-based methods to show their practical advantages against the EM algorithm. 

\begin{figure}[H]
	\centering
	\includegraphics[width=0.98\textwidth]{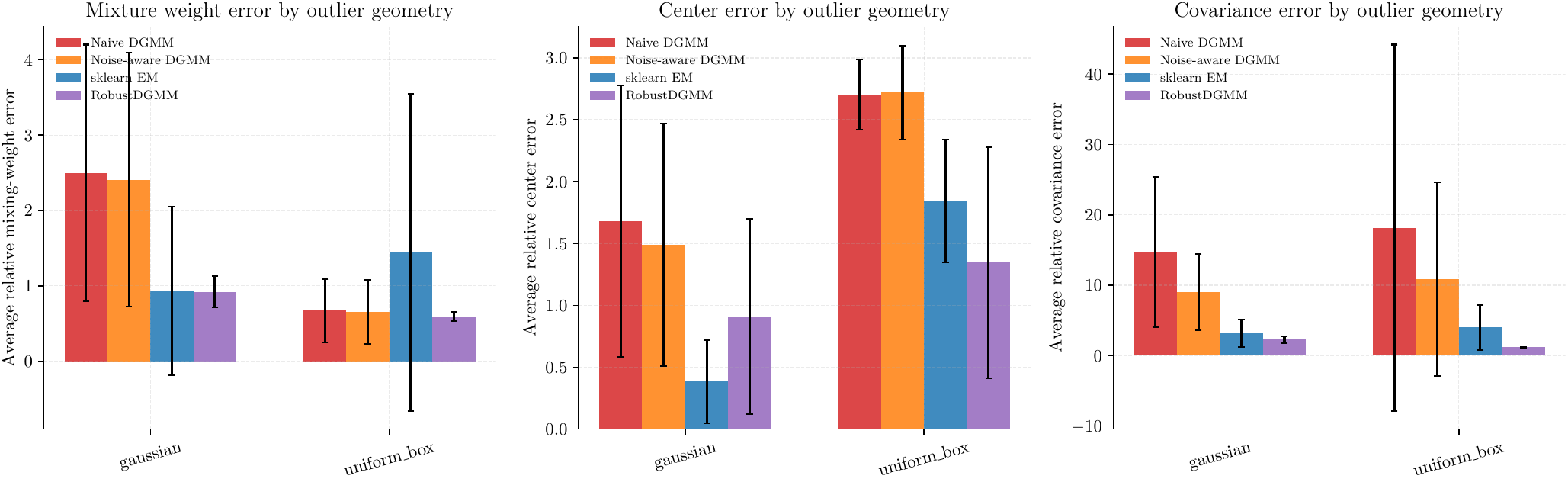}
	\caption{Mixing probability error, center error, and covariance errors across the methods under different outlier geometries.}
	\label{fig:comparison-outlier-geometries}
\end{figure}

\section{Conclusions}
\label{sec:conclusions}
In this work, we develop the SGR-GMM algorithm based on a spectral gradient reweighting primitive in the space of moment-matching gradients. The final local finite-sample GMM parameter estimation error decomposes into the following interpretable quantities: local identification \((\lambda^\star)\), clean inlier stability \((\delta_{\mu,k},\delta_{\Sigma,k})\), contamination level \((\alpha_\epsilon)\), inner spectral-game optimization error \((\delta_{T,k})\), and the numerical optimizer residual \((\delta_{\opt})\).

We further specialize the SGR-GMM algorithm in the framework of DGMM for estimating heteroscedastic low-rank Gaussian mixtures observed under additive Gaussian noise and strong $\epsilon$-contamination. The numerical experiments verify that the spectral reweighting primitive is near-oracle under separated directional contamination and robust DGMM improves over non-robust moment baselines under structured contamination. The comparison with EM is more specific to the outlier geometry, which is expected because likelihood-based methods can be strong in benign outlier geometries but might struggle in more challenging outlier geometries. 

Beyond the focus of this paper, \cref{alg:robust-sgr-gmm} can be adapted to other moment-based estimation procedures in settings where likelihood-based estimation is unavailable, misspecified, or computationally inconvenient. These procedures include the classical method of moments, minimum-distance estimators, linear and nonlinear instrumental-variable estimators, iterated and continuously updated GMM, and more general \(Z\)-estimation procedures (see, e.g., \cite{hall2004generalized} for additional moment-based procedures). More broadly, \cref{alg:robust-sgr-gmm} suggests a general algorithmic design principle: for estimators determinated by empirical first-order information, by making the empirical update directions spectrally stable, robustness to adversarial contamination can be achieved effectively while still preserving the original estimating equations and optimization architecture as much as possible. 

\section*{Funding}
L.Z.~and A.S.~were supported in part by AFOSR FA9550-23-1-0249, DARPA HR0011-25-3-E002, NSF DMS 2510039, and the Simons Foundation Math+X Investigator Award.

\section*{Data availability statement}
The code and data used in \cref{sec:numerical-experiments} are available at \url{https://github.com/liu-lzhang/sgr-gmm}. 

\bibliographystyle{plain}
\bibliography{sgr_gmm_references.bib}

\begin{thebibliography}{10}

\bibitem{arora2012multiplicative}
Sanjeev Arora, Elad Hazan, and Satyen Kale.
\newblock The multiplicative weights update method: a meta-algorithm and
  applications.
\newblock {\em Theory Comput.}, 8:121--164, 2012.

\bibitem{bubeck_convex_2015}
Sébastien Bubeck.
\newblock Convex {Optimization}: {Algorithms} and {Complexity}, November 2015.
\newblock arXiv:1405.4980 [math].

\bibitem{cheng2020high}
Yu~Cheng, Ilias Diakonikolas, Rong Ge, and Mahdi Soltanolkotabi.
\newblock High-dimensional robust mean estimation via gradient descent.
\newblock In {\em International conference on machine learning}, pages
  1768--1778. PMLR, 2020.

\bibitem{dalalyan2022allinone}
Arnak~S. Dalalyan and Arshak Minasyan.
\newblock All-in-one robust estimator of the {Gaussian} mean.
\newblock {\em The Annals of Statistics}, 50(2), April 2022.

\bibitem{dennisschnabel1996numerical}
J.~E. Dennis, Jr. and Robert~B. Schnabel.
\newblock {\em Numerical methods for unconstrained optimization and nonlinear
  equations}, volume~16 of {\em Classics in Applied Mathematics}.
\newblock Society for Industrial and Applied Mathematics (SIAM), Philadelphia,
  PA, 1996.
\newblock Corrected reprint of the 1983 original.

\bibitem{diakonikolas2019robust}
Ilias Diakonikolas, Gautam Kamath, Daniel Kane, Jerry Li, Ankur Moitra, and
  Alistair Stewart.
\newblock Robust estimators in high-dimensions without the computational
  intractability.
\newblock {\em SIAM Journal on Computing}, 48(2):742--864, 2019.

\bibitem{diakonikolas2019sever}
Ilias Diakonikolas, Gautam Kamath, Daniel Kane, Jerry Li, Jacob Steinhardt, and
  Alistair Stewart.
\newblock Sever: A robust meta-algorithm for stochastic optimization.
\newblock In {\em International Conference on Machine Learning}, pages
  1596--1606. PMLR, 2019.

\bibitem{diakonikolas2023algorithmic}
Ilias Diakonikolas and Daniel~M Kane.
\newblock {\em Algorithmic high-dimensional robust statistics}.
\newblock Cambridge university press, 2023.

\bibitem{dong2019quantum}
Yihe Dong, Samuel Hopkins, and Jerry Li.
\newblock Quantum entropy scoring for fast robust mean estimation and improved
  outlier detection.
\newblock {\em Advances in Neural Information Processing Systems}, 32, 2019.

\bibitem{donohohuber1983breakdown}
David Donoho and Peter~J. Huber.
\newblock The notion of breakdown point.
\newblock In {\em A {F}estschrift for {E}rich {L}. {L}ehmann}, Wadsworth
  Statist./Probab. Ser., pages 157--184. Wadsworth, Belmont, CA, 1983.

\bibitem{hall2004generalized}
Alastair Hall.
\newblock {\em Generalized Method of Moments}.
\newblock Wiley Online Library, 2004.

\bibitem{hampel1968contributions}
Frank~Rudolf Hampel.
\newblock {\em Contributions to the theory of robust estimation}.
\newblock PhD thesis, University of California, Berkeley, 1968.

\bibitem{hansen1982large}
Lars~Peter Hansen.
\newblock Large sample properties of generalized method of moments estimators.
\newblock {\em Econometrica}, 50(4):1029--1054, 1982.

\bibitem{hazan_introduction_2023}
Elad Hazan.
\newblock Introduction to {Online} {Convex} {Optimization}, August 2023.
\newblock arXiv:1909.05207 [cs].

\bibitem{hopkins2020robust}
Sam Hopkins, Jerry Li, and Fred Zhang.
\newblock Robust and heavy-tailed mean estimation made simple, via regret
  minimization.
\newblock {\em Advances in Neural Information Processing Systems},
  33:11902--11912, 2020.

\bibitem{huber1964robust}
Peter~J Huber.
\newblock Robust estimation of a location parameter.
\newblock {\em The Annals of Mathematical Statistics}, 35(1):73--101, 1964.

\bibitem{huber1965robusttest}
Peter~J. Huber.
\newblock A robust version of the probability ratio test.
\newblock {\em Ann. Math. Statist.}, 36:1753--1758, 1965.

\bibitem{lai2016agnostic}
Kevin~A Lai, Anup~B Rao, and Santosh Vempala.
\newblock Agnostic estimation of mean and covariance.
\newblock In {\em 2016 IEEE 57th Annual Symposium on Foundations of Computer
  Science (FOCS)}, pages 665--674. IEEE, 2016.

\bibitem{lerman2025global}
Gilad Lerman, Kang Li, Tyler Maunu, and Teng Zhang.
\newblock Global convergence of iteratively reweighted least squares for robust
  subspace recovery.
\newblock {\em arXiv preprint arXiv:2506.20533}, 2025.

\bibitem{lermanmaunu2018fast}
Gilad Lerman and Tyler Maunu.
\newblock Fast, robust and non-convex subspace recovery.
\newblock {\em Inf. Inference}, 7(2):277--336, 2018.

\bibitem{loh2025review}
Po-Ling Loh.
\newblock A theoretical review of modern robust statistics.
\newblock {\em Annu. Rev. Stat. Appl.}, 12:477--496, 2025.

\bibitem{luenbergerlinear2008}
David~G. Luenberger and Yinyu Ye.
\newblock {\em Linear and {Nonlinear} {Programming}}.
\newblock Number 116 in International {Series} in {Operations} {Research} \&
  {Management} {Science}. Springer US, Boston, MA, third edition edition, 2008.

\bibitem{newey1994large}
Whitney~K Newey and Daniel McFadden.
\newblock Large sample estimation and hypothesis testing.
\newblock In {\em Handbook of Econometrics}, volume~4, pages 2111--2245.
  Elsevier, 1994.

\bibitem{nielsen2010quantum}
Michael~A Nielsen and Isaac~L Chuang.
\newblock {\em Quantum computation and quantum information}.
\newblock Cambridge university press, 2010.

\bibitem{pearson1894contributions}
Karl Pearson.
\newblock Contributions to the mathematical theory of evolution.
\newblock {\em Philosophical Transactions of the Royal Society of London. A},
  185:71--110, 1894.

\bibitem{prasad2020robustgradient}
Adarsh Prasad, Arun~Sai Suggala, Sivaraman Balakrishnan, and Pradeep Ravikumar.
\newblock Robust estimation via robust gradient estimation.
\newblock {\em J. R. Stat. Soc. Ser. B. Stat. Methodol.}, 82(3):601--627, 2020.

\bibitem{rohatgi2022robust}
Dhruv Rohatgi and Vasilis Syrgkanis.
\newblock Robust generalized method of moments: a finite sample viewpoint.
\newblock {\em Advances in Neural Information Processing Systems},
  35:15970--15981, 2022.

\bibitem{ronchetti1979robusttest}
Elvezio Ronchetti.
\newblock {\em Robusthatseigenschaften von Tests}.
\newblock PhD thesis, ETH Zürich, 1979.

\bibitem{ruskai2002quantum}
Mary~Beth Ruskai.
\newblock Inequalities for quantum entropy: a review with conditions for
  equality.
\newblock volume~43, pages 4358--4375. 2002.
\newblock Quantum information theory.

\bibitem{tian2025mmw}
Kevin Tian.
\newblock {CS395T}: Continuous algorithms, part viii: Matrix multiplicative
  weights, 2025.
\newblock Lecture notes.

\bibitem{zhang2025diagonally}
Liu Zhang, Oscar Mickelin, Sheng Xu, and Amit Singer.
\newblock Diagonally-weighted generalized method of moments estimation for
  {G}aussian mixture modeling.
\newblock {\em arXiv preprint arXiv:2507.20459}, 2025.

\bibitem{zhu2022quasigradients}
Banghua Zhu, Jiantao Jiao, and Jacob Steinhardt.
\newblock Robust estimation via generalized quasi-gradients.
\newblock {\em Inf. Inference}, 11(2):581--636, 2022.

\end{thebibliography}

\appendix

\makeatletter
\renewcommand{\theHsection}{appendix.\Alph{section}}
\renewcommand{\theHsubsection}{appendix.\Alph{section}.\arabic{subsection}}
\renewcommand{\theHsubsubsection}{appendix.\Alph{section}.\arabic{subsection}.\arabic{subsubsection}}
\renewcommand{\theHequation}{appendix.\Alph{section}.\arabic{equation}}
\makeatother

\section{Supplementary proofs}
\label{app-sec:robust-sgr-gmm}

\subsection{Supplementary proofs for fixed-center regret bound}
\label{app-sec:mw-mmw-rounds}
\printProofs[mw-mmw-rounds]

\subsection{Supplementary proofs for convergence of the fixed-center updates}
\label{app-sec:fixed-center-updates}
\printProofs[fixed-center-updates]

\subsection{Supplementary proofs for local finite-sample GMM analysis}
\label{app-sec:finite-sample-gmm}
\printProofs[finite-sample-gmm]

\end{document}